\documentclass[11pt]{article}

\usepackage{amsthm,amsfonts,amssymb,amsmath,color, mathtools}
\usepackage{indentfirst}
\usepackage{enumerate}
\usepackage{cleveref}
\allowdisplaybreaks[4]

\usepackage{tikz}
\usepackage[title]{appendix}
\numberwithin{equation}{section}

\renewcommand\a{\alpha}

\newcommand\R{\mathbb R}\newcommand\N{\mathbb N}

\def\de{\delta}

\def\O{\Omega}

\def\epsilon{\varepsilon}
\def\e{\varepsilon}

\newcommand\br{\begin{rem}}
\newcommand\er{\end{rem}}
\newcommand\bp{\begin{pmatrix}}
\newcommand\ep{\end{pmatrix}}
\newcommand\be{\begin{equation}}
\newcommand\ee{\end{equation}}
\newcommand\ba{\begin{equation}\begin{aligned}}
\newcommand\ea{\end{aligned}\end{equation}}

\newcommand\nn{\nonumber}

\newcommand{\bfvarphi}{\boldsymbol{\varphi}}

\usepackage{layout}

\usepackage{amssymb}

\newcommand{\uu}{{\mathbf u}}
\newcommand{\vv}{{\mathbf v}}
\newcommand{\ff}{{\mathbf f}}

\newcommand{\vu}{\vc{u}}

\newcommand{\vc}[1]{{\bf #1}}

\newcommand{\dx}{{\rm d} {x}}
\newcommand{\dt}{{\rm d} t }

\newcommand{\dive}{{\rm div\,}}

\newtheorem{defi}{Definition}[section]
\newtheorem{theorem}[defi]{Theorem}
\newtheorem{proposition}[defi]{Proposition}
\newtheorem{lemma}[defi]{Lemma}

\newtheorem{remark}[defi]{Remark}

\def\thesection{\arabic{section}}

\numberwithin{equation}{section}

\begin{document}

\title{Qualitative and quantitative homogenization of some non-Newtonian flows in perforated domains: case of `small holes'}

\author{Yong Lu \footnote{School of Mathematics, Nanjing University, Nanjing 210093, China, luyong@nju.edu.cn}\and Zhengmao Qian \footnote{School of Big Data and Statistics, Tongling University, Tongling 244000, China, zmqian0302@163.com}\and Chenchen Zhang\footnote{School of Mathematics, Nanjing University, Nanjing 210093, China, chenchenzhang@smail.nju.edu.cn}}
\date{}

\maketitle

\begin{abstract}
We consider the homogenization of three dimensional viscous incompressible non-Newtonian flows satisfying certain general $r$-structure in perforated domains. We focus on the case of `small holes' by assuming the holes under consideration are of size $\e^{\a}$ with $\a >3$, where $\e$ is the perforation parameter used to measure the mutual distance between the  holes. We show the limit equations remain unchanged in the homogenization limit under the constraint $ \frac{6(\alpha - 1)}{4\alpha-5}< r<3-\frac{3}{\alpha}$, which seems optimal in the sense of Sobolev capacity of holes as explained in Remark \ref{rem-thm1}.  Quantitative convergence rates are further derived for both the velocity field and the pressure. To the best of our knowledge, both the qualitative and quantitative homogenization results are firstly given for non-Newtonian flows in the `small holes' case.  
\end{abstract}

\renewcommand{\refname}{References}


\section{Introduction}
Homogenization problems in fluid mechanics represent the study of fluid flows in domains perforated with a large number of small holes. The goal is to describe the asymptotic behavior of fluid flows as the number of holes goes to infinity and the size of holes goes to zero simultaneously. The target equations that describe the limit behavior of fluid flows are called homogenized equations, which are defined in homogeneous domains without holes.

In this paper, we consider the homogenization of incompressible viscous non-Newtonian flows in perforated domains. Non-Newtonian fluids are extensively used in a number of applied problems involving the production of oil and gas from underground reservoirs. We introduce the following equations describing steady non-Newtonian fluids in three dimensions:

\begin{equation}\label{1.1}
\begin{cases}
-{\rm div}\, \mathbb{S}(D\mathbf{u}_\varepsilon) +  (\mathbf{u}_\varepsilon\cdot \nabla) \mathbf{u}_\varepsilon +\nabla p_\varepsilon=\mathbf{f},& {\rm in}\;\Omega_\varepsilon, \\
{\rm div} \,\mathbf{u}_\varepsilon=0,& {\rm in}\;\Omega_\varepsilon,\\
\mathbf{u}_\varepsilon=0,& {\rm on}\;\partial\Omega_\varepsilon.
\end{cases}
\end{equation}
Here $\mathbf{u}_\varepsilon$ is the velocity, $\nabla \mathbf{u}_\varepsilon$ is the velocity gradient tensor, $D\mathbf{u}_\varepsilon=\frac{1}{2}(\nabla \mathbf{u}_\varepsilon+\nabla^T\mathbf{u}_\varepsilon)$ denotes the rate-of-strain tensor, $p_\varepsilon$ denotes the pressure, $\mathbf{f}$ is the density of the external force and is assumed to be independent of $\e$ and belongs to $L^{\infty}(\Omega;\R^{3})$. By rather straightforward modifications, our argument still works and our main results still hold for $\mathbf{f}$ depending on $\e$ and converging strongly in some $L^{q}(\Omega;\R^{3})$ space. 

Inspired by the previous studies (see \cite{DRW10} or \cite{BGMS12}) concerning the existence theory, we impose the following general assumptions on the stress tensor. We say that the stress tensor $\mathbb{S}$ possesses a {\em $r$-structure} if there exists $r\in (1,\infty)$ and $\delta\geq 0$, such that

\begin{itemize}
\item  

$\displaystyle \mathbb{S}: \mathbb{M}^3_{sym}\to \mathbb{M}^3_{sym}$   is  continuous, where  $\mathbb{M}^3_{sym}$ denotes the collection of $3\times 3$ symmetric matrices. 

\item 
Growth condition: there exists $c_{0}>0$ such that 
\ba\label{growth}
|\mathbb{S}(\boldsymbol{\xi})|\leq c_0(\delta+|\boldsymbol{\xi}|)^{r-2}|\boldsymbol{\xi}|
\ea
\ {\rm for  all}  $\boldsymbol{\xi}\in \mathbb{M}^3_{sym}$;

\item 
Coercivity: there exists $c_{1}>0$ such that 
\ba\label{coer}
\mathbb{S}(\boldsymbol{\xi}):\boldsymbol{\xi}\geq c_1(\delta+|\boldsymbol{\xi}|)^{r-2}|\boldsymbol{\xi}|^2
\ea
\  {\rm for all} $\boldsymbol{\xi}\in \mathbb{M}^3_{sym}$;

\item 
Strict monotonicity:
\ba\label{monotonicity}
(\mathbb{S}(\boldsymbol{\xi})-\mathbb{S}(\boldsymbol{\zeta})):(\boldsymbol{\xi}-\boldsymbol{\zeta})>0
\ea
\  {\rm for  all}  $\boldsymbol{\xi},  \boldsymbol{\zeta}\in \mathbb{M}^3_{sym}$ \ {\rm such  that}  $\boldsymbol{\xi}\neq \boldsymbol{\zeta}$.
\end{itemize}

    Typical examples satisfying \eqref{growth}--\eqref{monotonicity} include the power-law and the Carreau-Yasuda law:
    \ba\label{power-CY}
        \mathbb{S}(\boldsymbol{\xi}) & =\mu_0(\delta+|\boldsymbol{\xi}|)^{r-2}\boldsymbol{\xi}+\mu_1\boldsymbol{\xi},
        \\
        \mathbb{S}(\boldsymbol{\xi}) & =\mu_0(\delta^2+|\boldsymbol{\xi}|^{2})^{\frac{r-2}{2}}\boldsymbol{\xi}+\mu_1\boldsymbol{\xi},
    \ea
    with $1<r<\infty, \ \delta\geq 0$, $\mu_0>0,\ \mu_1\geq 0$.  The fluid is said to be shear thinning if $1<r<2$ and to be shear thickening if $r>2$.
    
    \medskip

    The perforated domain $\Omega_{\e}$ under consideration is described as follows. Let $\Omega\subset \mathbb{R}^3$ be a bounded domain of class $C^{2,1}$ and $\{T_{\varepsilon,k}\}_{k\in K_\e}\subset\Omega$ be a family of closed sets called holes satisfying
    \begin{equation}\label{hole description}
    T_{\varepsilon,k} = x_{\e, k}+a_\e T\subset B( x_{\e, k},\delta_1a_\e)\subset B( x_{\e,k},\delta_2 \e) \subset \varepsilon Q_k,
    \end{equation}
    where $Q_k=(-\frac{1}{2},\frac{1}{2})^3+k,$ $x_{\e,k}\in \e Q_k,\; k\in {\mathbb{Z}}^3$ and $a_\e=\e^\alpha$ for some $\alpha\geq1$ independent of $\e$; $\delta_i,$ $i=1,2$ are fixed positive numbers. $T\subset\mathbb{R}^3$ is a model hole which is assumed to be a closed simply connected domain contained in $Q_0$ with $C^{2,1}$ boundary and contains the origin $0$ as an interior point. The parameters $\e$ and $a_\e$ are used to measure the mutual distance and the size of the holes, respectively.  Throughout the paper, the perforated domain $\Omega_\varepsilon$ is then defined as:
    \begin{equation}\label{def-O}
        \Omega_\varepsilon=\Omega\backslash\bigcup_{k\in K_\varepsilon}T_{\varepsilon,k},\quad {\rm where} \ K_\varepsilon=\{k\in {\mathbb{Z}}^3:\varepsilon\overline{Q}_{k}\subset\Omega\}.
    \end{equation}
    Note that the periodicity of the distribution of holes is not assumed in this paper. By the distribution of holes assumed above, the number of holes in $\O_\e$ is of order $\e^{-3}$. 
    
    \medskip

    We review some known results in the mathematical study of homogenization problems in fluid mechanics. The homogenization of the Stokes system in perforated domains has been systematically studied. In \cite{Tartar80}, Tartar considered the case where the size of the holes is proportional to the mutual distance of the holes and Darcy's law was derived.
    Then Allaire \cite{Allaire90.1, Allaire90.2} considered general cases and showed that the homogenized equations are determined by the ratio $\sigma_\varepsilon$ between the size and the mutual distance of the holes:
    \ba\nonumber
        \sigma_\varepsilon=\left(\frac{\varepsilon^d}{a_\varepsilon^{d-2}}\right)^\frac{1}{2},\quad d\geq3;
        \qquad
        \sigma_\varepsilon=\varepsilon\big | {\rm log}\frac{a_\varepsilon}{\varepsilon}\big | ^\frac{1}{2},\quad d=2,
    \ea
    where $\varepsilon$ and $a_\varepsilon$ are used to measure the mutual distance of the holes and the size of the holes. In particular, if $\lim_{\varepsilon\rightarrow 0}\sigma_\varepsilon=0$ corresponding to the case of {\em large holes}, the homogenized system is Darcy's law; if $\lim_{\varepsilon\rightarrow 0}\sigma_\varepsilon=\infty$ corresponding to the case of {\em small holes}, the homogenized equations remain the same as the original ones; if $\lim_{\varepsilon\rightarrow 0}\sigma_\varepsilon=\sigma_\ast\in (0,+\infty)$ corresponding to the case of {\em critical size of holes}, the homogenized equations satisfy Brinkman's law --- a combination of Darcy's law and the original Stokes equations.

    Later, the homogenization study was extended to more complicated models describing fluid flows: Mikeli{\'c} \cite{Mikelic91} considered  the non-stationary incompressible Navier-Stokes equations; Masmoudi \cite{Masmoudi02} considered  the compressible Navier-Stokes equations; Feireisl, Novotn{\'y} and Takahashi \cite{FNT10} considered  the Navier-Stokes-Fourier equations. In these studies, only the case where the size of the holes is proportional to the mutual distance of the holes is considered and Darcy's law is recovered in the limit.

    Recently, cases with different sizes of holes are studied. In \cite{DFL17,FL15,LS18}, with collaborators the first author considered the case of small holes for the compressible Navier-Stokes equations and it is shown that the homogenized equations remain the same as the original ones.  The homogenization of compressible Navier-Stokes equations was further considered: Oschmann and Pokorn{\' y} \cite{OP23}  considered the case of small holes for the unsteady compressible Navier-Stokes equations for adiabatic exponent $\gamma>3$ which improved the condition $\gamma>6$ in \cite{LS18}; Ne{\v c}asov{\'a} and Oschmann \cite{NO23} studied the homogenization of two-dimensional evolutionary compressible Navier-Stokes equations with very small holes; H{\"o}fer, Kowalczyk and Schwarzacher \cite{HKS21} studied the case of large holes for the compressible Navier-Stokes equations at low Mach number and derived the Darcy's law; Bella and Oschmann \cite{BO22} studied the case with critical size of holes for the compressible Navier-Stokes equations at low Mach number and they derived the incompressible Navier-Stokes equations with Brinkman term; Bella, Feireisl and Oschmann \cite{BFO23} considered the case of unsteady compressible Navier-Stokes equations at low Mach number under the assumption $\O_{\e}\to\O$ in the sense of Mosco's convergence and they derived the incompressible Navier-Stokes equations. 
    
    The majority of studies assumes the small holes are periodically distributed, while more general distribution of holes was also studied: Feireisl, Namlyeyeva and Ne{\v c}asov{\'a} \cite{FNN16} studied the incompressible Navier-Stokes equations for the case with critical size of holes which impose certain mild hypotheses concerning the shape of the holes and their mutual distance, and they derived the Brinkman's law; Bella and Oschmann \cite{BO23} considered the homogenization of compressible Navier-Stokes equations for the case of randomly perforated domains with small size of holes and they derived the same limiting equations. In \cite{hofer-necasova-oschmann-2024}, H{\"o}fer, Ne{\v c}asov{\'a} and Oschmann prove quantitative homogenization toward Darcy’s law via a relative energy method for the compressible Navier–Stokes equations in periodically perforated domains with $a_{\e}\sim \e$.

    However, there are not many mathematical studies concerning the homogenization of non-Newtonian flows. Bourgeat and Mikeli\'{c} \cite{BM96} considered the stationary case of Carreau-Yasuda type flows under the assumption $a_{\e}\sim \e$ and derived Darcy's law. Mikeli\'{c} summarized some theory of stationary non-Newtonian flows in Chapter 4 of \cite{Hornung97}. In \cite{LQ24}, the first two authors considered the evolutionary case of Carreau-Yasuda law flows under the assumption $a_{\e}\sim \e$ and derived Darcy's law. H\"ofer, Lu and Oschmann in \cite{lu-oschmann-2024} considered the three dimensional non-Newtonian flows of Carreau-Yasuda type under the assumption $a_{\e}\sim \e^{\alpha}$ with $1<\alpha<\frac{3}{2}$ and derived Darcy's law for both the steady and the evolutionary cases.
    
    In this paper, we consider general steady non-Newtonian fluids for the case of `small holes', meaning that $a_{\e} = \e^{\alpha}$ with $\a>3$, and show that the limiting equations remain unchanged. To our knowledge, this is the first result concerning the homogenization of non-Newtonian flows for the case of `small holes'. In particular, quantitative convergence rates are given for the first time. 

    \subsection{Notations}
    We recall some notations. Let $1\leq r\leq \infty$ and $\Omega$ be a bounded domain. We use the notation $L_0^r(\Omega)$ to denote the space of $L^r(\Omega)$ functions with zero mean value:
    $$L_0^r(\Omega)=\Big\{f\in L^r(\Omega) \ : \ \int_\Omega f\, {\rm d}x=0\Big\}.$$
    We use $W^{1,r}(\Omega)$ to denote the classical Sobolev space, and $W^{1,r}_{0}(\Omega)$ denotes the completion of $C_c^\infty(\Omega)$ in $W^{1,r}(\Omega)$. Here $C_c^\infty(\Omega)$ is the space of smooth functions compactly supported in $\Omega$. We use $W^{-1,r'}(\Omega)$ to denote the dual space of $W^{1,r}_{0}(\Omega)$. For any $f\in L^{r}(\Omega_{\e})$, we use $\tilde f$ to denote its zero extensio:
    $$
    \tilde f  = f \ \mbox{in} \ \Omega_{\e} , \quad     \tilde f  = 0 \ \mbox{in} \  \R^{3} \setminus \Omega_{\e} 
    $$

    Now we introduce the definition of finite energy weak solutions to \eqref{1.1}:
    \begin{defi}\label{def-weak}
        Let $1<r<\infty$. We say that $(\mathbf{u}_{\varepsilon},p_{\e})$ is a finite energy weak solution of \eqref{1.1} in $ \Omega_{\e}$ provided
    \begin{itemize}
    \item $\uu_{\e} \in W^{1,r}_{0}(\Omega_{\e};\R^{3}), \ \dive \uu_{\e} = 0;$  $p_{\e} \in L^{q}(\Omega_{\e})$ for some $q>1$ depending on $r$;

    \item there holds the integral equality
    \ba\label{weak-eq-e}
        \int_{\Omega_{\varepsilon}} \big( -\mathbf{u}_{\e}\otimes \mathbf{u}_{\e}:\nabla\bfvarphi+\mathbb{S}(D\mathbf{u}_\e):D\bfvarphi   - p_{\e} \dive \bfvarphi\big)\, \dx =\int_{\Omega_\varepsilon}\mathbf{f}\cdot\bfvarphi \, \dx
    \ea
    for any test function\;$\bfvarphi\in C_c^\infty(\Omega_\varepsilon;\mathbb{R}^3);$

    \item there holds the energy inequality
    \ba\label{energy-eq}
        \int_{\Omega_\varepsilon}\mathbb{S}(D\mathbf{u}_\e):D\mathbf{u}_\e\,\dx\leq \int_{\Omega_\varepsilon}\mathbf{f}\cdot \mathbf{u}_{\varepsilon}\,\dx.
    \ea
\end{itemize}
\end{defi}

If the stress tensor satisfies the {\em $r$-structure} \eqref{growth}$-$\eqref{monotonicity} with $r>\frac{6}{5}$, the existence of weak solutions is known, see for example \cite{DRW10} or \cite{BGMS12}.  The goal of this paper is to understand the behavior of such weak solutions as the perforation parameter $\e \to 0.$

\medskip

For brevity we use $C$ to denote a positive constant independent of $\varepsilon$ throughout the paper, while the value of $C$ may differ from line to line.

\subsection{Main results}
We now state our main results, where the limits are taken up to possible extractions of subsequences.

\begin{theorem}[qualitative homogenization]\label{thm-1}
Let $\alpha>3$ and $ \frac{6(\a - 1)}{4\alpha-5}< r<3-\frac{3}{\alpha}$. Assume the stress tensor $\mathbb{S}$ satisfies the {\em $r$-structure} \eqref{growth}--\eqref{monotonicity}. Let $(\mathbf{u}_\varepsilon,p_\varepsilon)$ be a finite energy weak solution of \eqref{1.1} in the sense of Definition \ref{def-weak} and $(\tilde \uu_\e,\tilde{p}_\varepsilon)$ be their zero extensions in $\Omega$. Then there holds
    \ba\nonumber
        \tilde \uu_\e \rightarrow \mathbf{u} \ weakly \ in \ W^{1,r}_0(\Omega ;\mathbb{R}^3),\quad 
        \tilde p_\e \rightarrow p \ weakly \ in \ 
        L^{\min\{r',\frac{r^*}{2}\}}(\Omega),
    \ea
    where $r' = \frac{r}{r-1}$ and $r^{*} =  \frac{3r}{3-r}$ are the Lebesgue  conjugate exponent and Sobolev embedding exponent of $r$, respectively. 
    Moreover, the limit $(\mathbf{u},p)$ satisfies the same fluid equations in the homogeneous domain $\Omega$:
    \begin{equation}\label{limit fluid equation}
        -{\rm div}\, \mathbb{S}(D\mathbf{u}) +  (\mathbf{u}\cdot \nabla) \mathbf{u}
        +\nabla p=\mathbf{f}  , \quad \dive \vu = 0 \quad  in\ \mathcal{D'} (\Omega).
    \end{equation}
\end{theorem}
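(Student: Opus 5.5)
We follow the standard scheme for the `small holes' regime: uniform bounds, a restriction operator for the pressure, and passage to the limit with test functions cut off near the holes.

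\textbf{Step 1: uniform velocity bounds.}
From the energy inequality \eqref{energy-eq}, coercivity \eqref{coer}, Korn's inequality and Poincar\'e's inequality in $W^{1,r}_0(\Omega)$ (applied to the zero extension $\tilde\uu_\e\in W^{1,r}_0(\Omega)$), we get
\[
\|\tilde\uu_\e\|_{W^{1,r}_0(\Omega)}\le C,
\]
with $C$ independent of $\e$. When $\delta>0$ and $r<2$ we split the regions $|D\uu_\e|\le\delta$ and $|D\uu_\e|>\delta$, and use $\mathbf f\in L^\infty$. Hence $\tilde\uu_\e\to\uu$ weakly in $W^{1,r}_0$ and strongly in $L^s$ for $s<r^*$. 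The growth condition \eqref{growth} gives $\mathbb S(D\uu_\e)$ bounded in $L^{r'}$, and $\uu_\e\otimes\uu_\e$ is bounded in $L^{r^*/2}$.

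\textbf{Step 2: pressure bounds.}
We construct a Bogovskii-type restriction operator $R_\e: W^{1,q}_0(\Omega)\to W^{1,q}_0(\Omega_\e)$ with $q=\max\{r,(r^*/2)'\}$. It preserves divergence-free fields, satisfies $\dive R_\e\varphi=\dive\varphi$ in $\Omega_\e$, and obeys
\[
\|\nabla R_\e\varphi\|_{L^q}\le C\bigl(\|\nabla\varphi\|_{L^q}+\e^{(3(\alpha-1)-\alpha q)/q}\|\varphi\|_{L^q}\bigr).
\]
The constant stays bounded exactly when $q<3-3/\alpha$, i.e.\ for $q$ below the capacity threshold. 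Duality then yields $\|\tilde p_\e\|_{L^{\min\{r',r^*/2\}}}\le C$ after normalization. Checking that both $r$ and $(r^*/2)'$ lie below $3-3/\alpha$ is precisely where the range of $r$ in Theorem \ref{thm-1} enters: $(r^*/2)'<3-3/\alpha$ is equivalent to $r>6(\alpha-1)/(4\alpha-5)$. The pressure needs care because it is defined only up to constants and through the test-function identity, and its extension is taken via $R_\e^*$.

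\textbf{Step 3: passage to the limit.}
We need $\dive\uu=0$ (immediate) and the limit equation. Fix $\varphi\in C_c^\infty(\Omega)$ and set $\varphi_\e=\varphi\, g_\e$, where $g_\e$ is a cutoff equal to $0$ on $B(x_{\e,k},\delta_1a_\e)$ and $1$ outside $B(x_{\e,k},2\delta_1a_\e)$. Since $\nabla g_\e$ is of size $a_\e^{-1}$ on a set of measure $\e^{-3}a_\e^3$, we get $\|\nabla(1-g_\e)\|_{L^s}\sim \e^{(3(\alpha-1)-\alpha s)/s}\to0$ whenever $s<3-3/\alpha$. Using $\varphi_\e$ in \eqref{weak-eq-e}, the error terms
\[
\int \mathbb S: \nabla(1-g_\e)\otimes\varphi,\qquad \int \tilde p_\e\,\nabla(1-g_\e)\cdot\varphi,\qquad \int \uu_\e\otimes\uu_\e:\varphi\otimes\nabla(1-g_\e)
\]
are controlled by Hölder's inequality with exponents $r$ and $(r^*/2)'$. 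They vanish by the same constraints. We thus obtain
\[
-\dive\overline{\mathbb S}+\dive(\uu\otimes\uu)+\nabla p=\mathbf f,
\]
where $\overline{\mathbb S}$ is the weak limit of $\tilde{\mathbb S}(D\uu_\e)$. The convective term converges because strong $L^s$ convergence holds with $r^*/2>1$ thanks to $r>6/5$.

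\textbf{Step 4 (main obstacle): identifying $\overline{\mathbb S}=\mathbb S(D\uu)$ by Minty's trick.}
We use the energy inequality
\[
\limsup\int\mathbb S(D\uu_\e):D\uu_\e\le\int\mathbf f\cdot\uu.
\]
We then test the limit equation with $\uu$ itself, which is admissible since $\uu\in W^{1,r}_0$. The convective term vanishes: this requires $\uu\otimes\uu\in L^{r'}$, i.e.\ $r\ge 9/5$, which may fail. In that case we would use a Lipschitz truncation argument in the style of \cite{DRW10}, applied to $\uu_\e-\uu$ corrected by $R_\e$. This gives
\[
\int\overline{\mathbb S}:D\uu\ge \limsup\int\mathbb S(D\uu_\e):D\uu_\e,
\]
and monotonicity \eqref{monotonicity} plus Minty's argument conclude. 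We expect this step to be the hardest, since it requires combining the restriction operator with the truncation argument.
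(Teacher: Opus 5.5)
\textbf{The gap is Step 4 for $\frac{6(\a-1)}{4\a-5}<r<\frac95$}, a range that is nonempty for every $\a>3$.

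Steps 1--3 match the paper: a perforated-domain Bogovskii/restriction operator with constant $C(1+\e^{((3-q)\a-3)/q})$ for the pressure, and the cut-off $g_\e$ for the limit equation. For $r\ge\frac95$, your Step 4 is valid and simpler than the paper's, which runs a Lipschitz-truncation argument over the whole range. Since $\uu\otimes\uu\in L^{r'}$, you may test the limit equation with $\uu$ (through smooth solenoidal approximations), which gives $\int_\O\overline{\mathbb S}:D\uu\,\dx=\int_\O\ff\cdot\uu\,\dx$. The energy inequality \eqref{energy-eq} gives $\limsup\int_\O\mathbb S(D\tilde\uu_\e):D\tilde\uu_\e\,\dx\le\int_\O\ff\cdot\uu\,\dx$, and the global Minty argument closes.

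In the lower range you assert that a truncation argument yields $\int_\O\overline{\mathbb S}:D\uu\,\dx\ge\limsup\int_\O\mathbb S(D\tilde\uu_\e):D\tilde\uu_\e\,\dx$. Given the weak convergences, this is equivalent to $\int_\O(\mathbb S(D\tilde\uu_\e)-\mathbb S(D\uu)):D(\tilde\uu_\e-\uu)\,\dx\to0$ on all of $\O$. That is a global no-concentration statement, and Lipschitz truncation does not deliver it. Take $\vv_m=\tilde\uu_{\e_m}-\uu$ and its solenoidal truncation $\vv_m^k$, with $\|\vv_m^k\|_{W^{1,\infty}}\le C\lambda_{m,k}$, $\vv_m^k=\vv_m$ off $E_{m,k}$, and $\lambda_{m,k}|E_{m,k}|^{1/r}\le C2^{-k/r}$. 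Testing with $\zeta\vv_m^k$ gives only
\[
\limsup_{m\to\infty}\Big|\int_{\O\setminus E_{m,k}}\big(\mathbb S(D\tilde\uu_{\e_m})-\overline{\mathbb S}\big):\zeta D\vv_m\,\dx\Big|\le C2^{-k/r}.
\]
The reason is that on $E_{m,k}$ you control $D(\zeta\vv_m^k)$ but not $D\vv_m$.

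\textbf{Missing idea: a localized Minty trick.}
\begin{enumerate}
\item Choose a diagonal sequence $m_k$ that realizes this bound up to $2^{-k}$.
\item Since $|E_{m_k,k}|\le C2^{-k}$ is summable, the weights $\zeta_k:=\zeta\chi_{\O\setminus E_{m_k,k}}$ converge to $\zeta$ a.e.
\item Run Minty's argument with these bounded, a.e.\ convergent weights, as in the paper's appendix, to get $\zeta\overline{\mathbb S}=\zeta\mathbb S(D\uu)$ a.e.
\end{enumerate}

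\textbf{Admissibility of the test function.} You must also say why $\zeta\vv_m^k$, which does not vanish on the holes, can be used as a test function. Either use your $R_\e$-correction, or do as the paper does: extend the equation to $\O$ with an error satisfying $|\langle{\bf G}_\e,\bfvarphi\rangle|\le C\e^\sigma\|\bfvarphi\|_{W^{1,q}(\O)}$. Then the error term is $O(\lambda_{m,k}\e_m^\sigma)$ and the convective term is $O(\lambda_{m,k}\|\tilde\uu_{\e_m}-\uu\|_{L^2(\O)})$; this is how the failure of $\uu\otimes\uu\in L^{r'}$ is bypassed. Both terms vanish only because $m\to\infty$ is taken at fixed $k$, before $k\to\infty$, since $\lambda_{m,k}$ blows up in $k$.

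Finally, drop the remark about extending the pressure via $R_\e^*$. The theorem concerns the zero extension $\tilde p_\e$, and that is what your Step 3 actually uses.
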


\medskip

\begin{remark}\label{rem-thm1}
We give some remarks concerning our main results and the main ideas of the proofs:

\begin{itemize}
    
   \item The constraint $ \frac{6(\a - 1)}{4\alpha-5}< r<3-\frac{3}{\alpha}$ in Theorem \ref{thm-1} implies  $r >\frac{3}{2}$. The results in \cite{DRW10} and \cite{BGMS12} ensure the existence of weak solutions in the sense of Definition \ref{def-weak}.
  
    \item The range $ \frac{6(\a - 1)}{4\alpha-5}< r<3-\frac{3}{\alpha}$ appears to be optimal in order to show the limit equations remain unchanged. Owing to the singular nature of the perforated domain $\Omega_{\e}$, the only uniform bounds available are those derived from the energy inequality, which ensures that $\uu_{\e}$ is uniformly bounded in $W^{1, r}(\Omega_{\e})$. Consequently, we obtain only the uniform boundedness of $ \mathbb{S}(D\tilde{\mathbf{u}}_\e)  $ in $L^{r'}(\Omega)$ and of $\tilde{\mathbf{u}}_\e\otimes \tilde \vu_{\e} $ in $L^{\frac{r^*}{2}}$.

    Showing the limit equations in $\Omega$ requires the zero extension $ \tilde{\bf u}_\e $ to satisfy the original equations in $\Omega$ up to a small error in the distributional sense. Since $\uu_{\e}$ only satisfies the equations in $\Omega_{\e}$,  it requires that the corresponding Sobolev capacities of $\Omega \setminus \Omega_{\e}$ to vanish (see \eqref{cap-holes}):
    \begin{itemize}
        \item for the term ${\rm div}\,\big(\mathbb{S}(D\tilde{\mathbf{u}}_\e) \big)$, given the uniform $L^{r'}(\Omega)$ estimate for $\mathbb{S}(D\tilde{\mathbf{u}}_\e)$, it is necessary to have the $r$-capacity of $\Omega \setminus \Omega_\varepsilon$ converge to zero. This yields the constraint $r<3-\frac{3}{\a}$;
        \item for the term ${\rm div}(\tilde{\mathbf{u}}_\e\otimes \tilde \vu_{\e})$, using the $L^{\frac{r^*}{2}}$ bound for $\tilde{\mathbf{u}}_\e$, it is necessary to have the $(\frac{r^*}{2})'$-capacity of $\Omega \setminus \Omega_\varepsilon$ converge to zero. This yields the constrain $r>\frac{6(\alpha-1)}{4\alpha-5}$. 
    \end{itemize}
    Consequently, the restriction $\frac{6(\alpha-1)}{4\alpha-5} < r < 3 - \frac{3}{\alpha}$ appears to be optimal. In particular, as $\a\to \infty$,  meaning that the holes become extremely small, this restriction can be relaxed to any $\frac 32 < r < 3$.
    
\item  We employ  a Bogovskii type operator in perforated domains introduced in \cite{DFL17}  to deduce the uniform estimate in $L^{\min\{r',\frac{r^*}{2}\}}$ for the pressure term $p_{\e}$, for which  $\nabla p_\varepsilon$ is uniformly bounded in $W^{-1, \min\{r',\frac{r^*}{2}\}}$ due to the fluid equations.
 
    \item Establishing the strong convergence of the nonlinear stress tensor is crucial. We first show that the momentum equations are satisfied in the homogeneous domain $\Omega$ up to a small error. We then employ the {\em solenoidal Lipschitz truncation} of Sobolev functions to enhance the regularity of the test functions and apply Minty's trick to derive the strong convergence of the velocity gradient. 

\end{itemize}
\end{remark}

\medskip

    Under additional smallness assumption on $\mathbf{f}$, the following quantitative homogenization result can be established. We focus on stress tensors taking the classical forms in \eqref{power-CY}; this could be generalized by imposing certain general assumptions. We shall focus on the non-Newtonian effect in the homogenization process and omit the Newtonian part by taking $\mu_{1} = 0$ in \eqref{power-CY}. 
    \begin{theorem}[quantitative homogenization]\label{convergence thm}
   Let $\mathbb{S}(\boldsymbol{\xi})$ be one of the forms in \eqref{power-CY} with $\delta>0,  \  \mu_0>0, \  \mu_{1} = 0$. Let $\alpha>3$ and $\frac{6(\a-1)}{4\a-5} < r<3-\frac{3}{\alpha}$.  Let $(\mathbf{u}_\varepsilon,p_\varepsilon)$ be a finite energy weak solution of \eqref{1.1} in the sense of Definition \ref{def-weak} and $(\tilde \uu_\e,\tilde{p}_\varepsilon)$ be its zero extension in $\Omega$.  Then there exists a sufficiently small constant $\kappa = \kappa(\mu_0, r,\delta, \Omega)$ such that if the external force $\|\mathbf{f}\|_{L^\infty(\O)} \le \kappa$, there holds the following improved regularity and convergence rates:
        \begin{itemize}     
        \item       if $2\leq r<3-\frac{3}{\a}$,   then $\mathbf{u}\in W^{1,2r+2}(\O)$ and
        \ba\label{rate-0r>2}
           \|\tilde{\bf u}_{\e}-{\bf u}\|_{W^{1,r}({\O})} +  \|\tilde{p}_\e-p\|_{L^{r'} (\Omega)}  +  \|\tilde{\bf u}_{\e}-{\bf u}\|^{\frac 2 r}_{W^{1,2}({\O})}  \le  C  \e^{\frac{(3-r)\alpha-3}{r^{2}}};
        \ea 

       \item  if $\frac{9}{5}\leq r<2$, then $\mathbf{u}\in W^{1,\infty}(\O)$ and
        \ba\label{rate-095<r<2}
            \|\tilde{\bf u}_{\e}-{\bf u}\|_{W^{1,r}({\O})} +   \|\tilde{p}_\e-p\|_{L^{r'}(\O)}^{\frac{1}{r-1}}\le C  \e^{\frac{(3-r)\alpha-3}{2r}};
        \ea
        \item  if $\frac{6(\a-1)}{4\a-5}<r<\frac{9}{5}$, then $\mathbf{u}\in W^{1,\infty}(\O)$ and
        \ba\label{rate-0r<95}
            \|\tilde{\bf u}_{\e}-{\bf u}\|_{W^{1,r}({\O})} +    \|\tilde{p}_\e-p\|_{L^{\frac{r^*}{2}}(\O)} ^{\frac{1}{r-1}} \le C  \e^{\frac{(4\a-5)r-6(\a-1)}{2r}}.
        \ea
        \end{itemize}
%
    \end{theorem}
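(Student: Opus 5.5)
We will prove Theorem \ref{convergence thm} by a relative-energy (monotonicity) argument: we compare $\tilde\uu_\e$ with the limit $\uu$, corrected by a cut-off near the holes.

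\textbf{Step 1: regularity of the limit.} Since $\|\mathbf f\|_{L^\infty}\le\kappa$ is small, the limit problem \eqref{limit fluid equation} has a unique solution. We use known regularity for $r$-structure systems with $\delta>0$ in $C^{2,1}$ domains under small data: the convective term is treated as a perturbation, and a fixed-point argument gives $\uu\in W^{2,q}$. This yields $\uu\in W^{1,\infty}(\Omega)$ when $r<2$, and $\uu\in W^{1,2r+2}(\Omega)$ when $r\ge 2$. Uniqueness also upgrades the subsequence convergence of Theorem \ref{thm-1} to convergence of the whole family.

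\textbf{Step 2: cut-off near the holes.} Let $g_\e$ be a cut-off that vanishes on the holes $T_{\e,k}$ and equals $1$ outside $B(x_{\e,k},2\delta_1 a_\e)$, with $|\nabla g_\e|\lesssim a_\e^{-1}$ on annuli. The holes number $\e^{-3}$, so
\[
\|1-g_\e\|_{L^q}^q\lesssim \e^{3\a-3},\qquad \|\nabla g_\e\|_{L^q}^q\lesssim \e^{(3-q)\a-3}.
\]
The product $g_\e\uu$ is not divergence free. We therefore set $\vv_\e=g_\e\uu-\mathcal B_\e(\dive(g_\e\uu))$, where $\mathcal B_\e$ is a Bogovskii operator on the annuli (the local version of the operator of \cite{DFL17}). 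Since $\dive(g_\e\uu)=\nabla g_\e\cdot\uu$ is supported on the annuli and $\uu\in L^\infty$, this gives
\[
\|\vv_\e-\uu\|_{W^{1,q}}\lesssim \e^{\frac{(3-q)\a-3}{q}} .
\]

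\textbf{Step 3: energy identity.} We test the $\e$-equation with $\vv_\e$, which is admissible on $\Omega_\e$. We subtract the limit equation tested with $\tilde\uu_\e$ and also use the energy inequality \eqref{energy-eq} together with the limit energy equality. Let $\boldsymbol w=\tilde\uu_\e-\uu$. The pressure terms drop out because all fields are divergence free. We obtain
\[
\int (\mathbb S(D\tilde\uu_\e)-\mathbb S(D\uu)):D\boldsymbol w \le R_\e ,
\]
where $R_\e$ collects three contributions: $\int \mathbb S(D\tilde\uu_\e):D(\uu-\vv_\e)$, the analogous force term, and the convective terms. The first is bounded by $C\|\uu-\vv_\e\|_{W^{1,r}}\lesssim\e^{\frac{(3-r)\a-3}{r}}$. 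The convective terms are bounded using $\int (\boldsymbol w\cdot\nabla)\uu\cdot\boldsymbol w\le \|\nabla\uu\|_\infty\|\boldsymbol w\|_2^2$ and smallness of $\uu$ from Step 1, so they are absorbed on the left. The remaining convective piece, $\int\tilde\uu_\e\otimes\tilde\uu_\e:\nabla(\uu-\vv_\e)$, needs $\uu-\vv_\e$ small in $W^{1,(r^*/2)'}$. This is where the exponent $\frac{(4\a-5)r-6(\a-1)}{2r}$ comes from: by Step 2 with $q=(r^*/2)'$, this term dominates for $r<9/5$.

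\textbf{Step 4: coercivity and pressure.} For $\delta>0$ and the laws \eqref{power-CY}, the monotonicity is quantitative.
\begin{itemize}
\item If $r\ge 2$: $(\mathbb S(\xi)-\mathbb S(\zeta)):(\xi-\zeta)\gtrsim |\xi-\zeta|^r+\delta^{r-2}|\xi-\zeta|^2$. This controls $\|\boldsymbol w\|_{W^{1,r}}^r+\|\boldsymbol w\|_{W^{1,2}}^2$ by $R_\e$ via Korn's inequality, giving \eqref{rate-0r>2}. The extra integrability $W^{1,2r+2}$ is used in the Hölder estimates of the $r\ge2$ stress differences.
\item If $r<2$: the bound is $\gtrsim(\delta+|\xi|+|\zeta|)^{r-2}|\xi-\zeta|^2$. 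Hölder with the $L^r$ bound on $D\tilde\uu_\e$ gives $\|D\boldsymbol w\|_r^2\lesssim R_\e$, hence the halved exponent $\frac1{2r}$ times $\big((3-r)\a-3\big)$, respectively times the convective exponent.
\end{itemize}
For the pressure, we write $\nabla(\tilde p_\e-p)$ as the divergence of stress and convective differences plus hole error terms. We then apply the Bogovskii operator of \cite{DFL17} and use $|\mathbb S(\xi)-\mathbb S(\zeta)|\lesssim|\xi-\zeta|^{r-1}$ for $r<2$, which produces the power $\frac1{r-1}$.

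\textbf{Main obstacle.} The hardest part is the bookkeeping in Step 3. We must make sure the error from the hole correction is measured in exactly the right dual norm for each term, namely $W^{1,r}$ for the stress and $W^{1,(r^*/2)'}$ for the convection. We must also absorb the convective terms using the smallness of $\kappa$, since only weak solutions (no uniqueness at level $\e$) are available.
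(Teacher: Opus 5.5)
Your argument is correct, but for the velocity rates it takes a different route from the paper.

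\textbf{Comparison of routes.} The paper tests the $\e$-equation directly with the non-solenoidal field $g_\e\uu$, so the pressure term $\int_\O\tilde p_\e\,\nabla g_\e\cdot\uu\,\dx$ survives. It is controlled by the uniform pressure bounds \eqref{est-p-r>2}--\eqref{est-p-r<2}. For $r<\frac95$ it is exactly this term that produces $\sigma_*=\frac{(4\a-5)r-6(\a-1)}{r}$, since $\tilde p_\e$ is only in $L^{\frac{r^*}{2}}$ and is paired with $\nabla g_\e\in L^{(\frac{r^*}{2})'}$.

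You instead subtract a local Bogovskii corrector on each annulus. Compatibility holds because $\dive\uu=0$ and $g_\e=1$ on the outer sphere, and the constants are scale invariant. Your velocity estimate is therefore pressure-free. The same exponent reappears through the remainder $\int_\O\tilde\uu_\e\otimes\tilde\uu_\e:\nabla(\uu-\vv_\e)\,\dx$, measured in $W^{1,(\frac{r^*}{2})'}$.

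The paper's route is shorter because it recycles bounds already proved. Yours costs the corrector construction but decouples the velocity rate from the pressure estimates. It even leaves room for improvement. Write the remainder as $-\int_\O(\tilde\uu_\e\cdot\nabla)\tilde\uu_\e\cdot(\uu-\vv_\e)\,\dx$ and use Poincar\'e on each annulus. This gives $\|\uu-\vv_\e\|_{L^{\frac{3r}{4r-6}}}\lesssim\e^{\frac{(\a-1)(4r-6)}{r}}=\e^{\sigma_*+1}$. Since both $\frac{(3-r)\a-3}{r}$ and $\sigma_*+1$ exceed $\sigma_*$ when $r<\frac95$, this would beat the stated velocity rate there.

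\textbf{Absorbing the convective term for $r\ge2$.} For $r\ge2$ the cited regularity is only $\uu\in W^{1,2r+2}(\O)$, so $\nabla\uu\in L^\infty$ is not available. Your fixed-point claim of $W^{2,q}$ is not covered by the cited results. Absorb $\int_\O\boldsymbol w\otimes\boldsymbol w:\nabla\uu\,\dx$ as the paper does: bound it by $\|\boldsymbol w\|_{L^6}^2\|\nabla\uu\|_{L^{3/2}}$ and use the smallness $\|\nabla\uu\|_{L^r}\lesssim\|\ff\|_{L^\infty}^{\frac1{r-1}}$.

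\textbf{The pressure step.} With the perforated-domain operator $\mathcal B_\e$ the momentum equations hold exactly on $\O_\e$. So the only hole error is $\int_{\O\setminus\O_\e}p\,\phi\,\dx$, together with the mean correction, because $p$ does not have zero mean on $\O_\e$. Extracting a rate from this term requires higher integrability of the limit pressure:
\begin{itemize}
\item $p\in L^{\frac{2r+2}{r-1}}(\O)$ for $r\ge2$;
\item $p\in L^q(\O)$ for all $q<\infty$ for $r<2$.
\end{itemize}
Both follow from the regularity of $\uu$ via the Bogovskii operator on $\O$. This, together with $\uu\in L^\infty$, is where $W^{1,2r+2}$ really enters. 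The stress-difference estimate needs only the uniform $L^r$ bounds. You must also check that the resulting exponent is not worse than the claimed one.
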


    \begin{remark}  \label{rem-quant}     Regarding the quantitative homogenization result, we give some remarks:
    \begin{itemize}
    
    \item The convergence rates are crucially related on the Sobolev capacity of the holes. Indeed,  there holds the estimate:
    \ba\label{cap-holes}
    {\rm Cap}_{r} (\Omega\setminus \Omega_{\e})^{\frac{1}{r}} \leq C \e^{\frac{(3-r)\a - 3}{r}}, \quad  {\rm Cap}_{(\frac{r^*}{2})'}(\Omega\setminus \Omega_{\e}) ^{\frac{1}{(\frac{r^*}{2})'}} \leq C \e^{\frac{(4\a-5)r-6(\a-1)}{r}}.
    \ea
    
    \item  Without the smallness assumption $\|\mathbf{f}\|_{L^\infty(\O)} \le \kappa$,  the improved regularity  $\mathbf{u}\in W^{1,2r+2}(\O)$ still holds for the case $2\leq r<3-\frac{3}{\a}$ (see \cite{BKR11});  while for the case $\frac{6(\a-1)}{4\a-5}<r<2$, such smallness assumption is needed to show $\mathbf{u}\in W^{1,\infty}(\O)$ (see \cite{G12}). However, to obtain the convergence rates in \eqref{rate-0r>2}, \eqref{rate-095<r<2} or \eqref{rate-0r<95}, such smallness assumption is always needed in our analysis.
        
        \end{itemize}    
          \end{remark}

The rest of the paper is organized as follows. In Section \ref{Preliminaries}, we present several known propositions that will be used in our analysis.  In Section \ref{Uniform estimates}, we derive uniform estimates for the velocity and the pressure. In Section \ref{Momentum equations}, we construct appropriate test functions to derive the homogenized equations in the homogeneous domain without holes. In Section \ref{Strong convergence of the velocity gradient}, we employ the method of solenoidal Lipschitz truncations of Sobolev functions to obtain the strong convergence of the stress tensor and prove Theorem \ref{thm-1}. Finally, in Section \ref{Convergence Rate}, we derive the convergence rates and complete the proof of Theorem \ref{convergence thm}.
 
\section{Preliminaries}\label{Preliminaries}
\subsection{Inverse of divergence}
    To obtain estimates for $p_\e$, we use the equations to deduce the corresponding bounds for $\nabla p_\e$ and then employ the Bogovskii operator to derive uniform estimates for $p_\e$. To this end, we recall the classical Bogovskii operator, which can be found in \cite{FN09}, as well as the Bogovskii-type operator in perforated domains constructed in \cite{DFL17}:
    \begin{proposition}\label{bogov}
        Let $\O$ be a bounded Lipschitz domain and $q\in (1, \infty)$. Then there exists a linear operator
        \ba\nonumber
            \mathcal{B}: L_0^q(\O) \to W_0^{1,q}(\O;\mathbb{R}^3)
        \ea
        such that for arbitrary $f\in L_0^q(\O)$ there holds  
        \ba\nonumber
        \operatorname{div}\mathcal{B}(f)=f \ \text{a.e in }\Omega,  \quad     \| \mathcal{B}(f)\|_{W_0^{1,q}(\O;\mathbb{R}^3)}\le C \|f\|_{L^q(\O)}.
        \ea
    \end{proposition}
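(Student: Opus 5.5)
The statement is the classical Bogovskii operator on a bounded Lipschitz domain. I would prove it in two stages: first on domains star-shaped with respect to a ball, then on general bounded Lipschitz domains by decomposition.

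\emph{Star-shaped case.} Suppose $\O$ is star-shaped with respect to a ball $B\Subset\O$. Fix $\omega\in C_c^\infty(B)$ with $\int\omega\,\dx=1$, and define
\[
\mathcal{B}(f)(x)=\int_{\O} f(y)\,\mathbf{N}(x,y)\,{\rm d}y,\qquad \mathbf{N}(x,y)=\frac{x-y}{|x-y|^{3}}\int_{|x-y|}^{\infty}\omega\Big(y+s\frac{x-y}{|x-y|}\Big)s^{2}\,{\rm d}s .
\]
This construction will be checked in four steps.
\begin{enumerate}
\item \emph{Support.} Star-shapedness guarantees that $\mathcal{B}(f)$ has compact support in $\O$ whenever $f\in C_c^\infty(\O)$.
\item \emph{Divergence.} A direct computation gives $\dive\mathcal{B}(f)=f-\omega\int_\O f\,\dx$. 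Hence $\dive\mathcal{B}(f)=f$ when $f$ has zero mean.
\item \emph{Structure of the gradient.} Differentiating under the integral, $\nabla\mathcal{B}(f)$ splits into a term $f(x)\,\mathbf{K}(x)$ with $\mathbf{K}$ bounded, plus a singular integral $\mathrm{p.v.}\int \mathbf{K}_0(x,x-y)f(y)\,{\rm d}y$. The kernel $\mathbf{K}_0(x,z)$ is homogeneous of degree $-3$ in $z$, has zero mean on spheres, and is smooth and bounded in $x$.
\item \emph{Estimate.} The Calder\'on--Zygmund theorem for such variable kernels (expand in spherical harmonics in $z$) yields $\|\nabla\mathcal{B}(f)\|_{L^q}\le C\|f\|_{L^q}$. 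Poincar\'e's inequality then gives the full $W^{1,q}_0$ norm. By density, $\mathcal{B}$ extends to all of $L^q_0(\O)$ with the same bound.
\end{enumerate}
I expect step 4, the Calder\'on--Zygmund bound for the variable-coefficient kernel, to be the main technical obstacle. I would handle it by the spherical-harmonic expansion with coefficient decay coming from the smoothness of $\omega$, as in Galdi's treatment.

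\emph{General Lipschitz case.} A bounded Lipschitz domain is a finite union $\O=\bigcup_{i=1}^m\O_i$ of domains, each star-shaped with respect to a ball. Given $f\in L^q_0(\O)$, I would decompose $f=\sum_i f_i$ with $f_i\in L^q_0(\O_i)$ and $\|f_i\|_{L^q}\le C\|f\|_{L^q}$. The decomposition is built inductively, as follows.
\begin{enumerate}
\item Order the pieces so that each $\O_{i+1}$ meets $\bigcup_{j\le i}\O_j$ in a set of positive measure.
\item Use a partition of unity $\{\chi_i\}$ subordinate to the cover to split $f=\sum_i\chi_i f$.
\item Correct the means by $\sum_i c_i\,\mathbf{1}_{A_i}/|A_i|$, where $A_i$ are the overlap sets and the constants $c_i$ are chosen sequentially so that each corrected piece has zero mean.
\end{enumerate}
Then set $\mathcal{B}(f)=\sum_i\mathcal{B}_i(f_i)$, with each $\mathcal{B}_i(f_i)$ extended by zero outside $\O_i$. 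Linearity, $\dive\mathcal{B}(f)=f$, and the estimate all follow from the star-shaped case. The constant $C$ depends only on $q$ and on the geometry of $\O$, namely the Lipschitz character and diameter, through the number of pieces and the ratio of each piece's diameter to its ball radius.
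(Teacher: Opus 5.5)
Your proposal is correct and follows the classical route: Bogovskii's explicit kernel on domains star-shaped with respect to a ball, the Calder\'on--Zygmund bound for the gradient, and a decomposition of $f$ over a finite star-shaped cover of the Lipschitz domain. This is exactly the standard proof (Bogovskii, Galdi, reproduced in the Feireisl--Novotn\'y monograph), which the paper cites without reproducing; two small details to tidy are that $\nabla\mathcal{B}(f)$ also contains weakly singular kernel terms of order $|x-y|^{-2}$ (harmless on a bounded domain), and that the mean correction on each overlap set $A_i$ must be assigned to pieces that actually contain $A_i$, e.g.\ by taking $A_i$ inside a single earlier $\Omega_j$.
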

    
    \begin{proposition}\label{bogov_e}
        Let $\Omega_\varepsilon$ be defined in \eqref{def-O} and $q\in (1, \infty)$. Then there exists a linear operator
    \ba\nonumber
        \mathcal{B}_\varepsilon: L_0^q(\Omega_\varepsilon) \to W_0^{1,q}(\Omega_\varepsilon; \mathbb{R}^3) 
    \ea
        such that for arbitrary $f \in L_0^q(\Omega_\varepsilon)$ there holds
    \ba\nonumber
    \operatorname{div} \mathcal{B}_\varepsilon(f) = f \ \text{a.e. in}  \ \Omega_\varepsilon, \quad        \|\mathcal{B}_\varepsilon(f)\|_{W_0^{1,q}(\Omega_\varepsilon)}   \le C(1 + \varepsilon^{\frac{(3-q)\alpha-3}{q}})\|f\|_{L^q(\Omega_\varepsilon)},
    \ea
    where the constant $C > 0$ is independent of $\varepsilon$.
    \end{proposition}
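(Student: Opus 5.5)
The construction follows \cite{DFL17}: a Bogovskii operator on the whole domain $\Omega$ combined with local corrections near each hole, carried out in rescaled cells.

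\textbf{Step 1 (global part).} Given $f\in L^q_0(\Omega_\e)$, extend it by zero to $\tilde f\in L^q_0(\Omega)$. Proposition \ref{bogov} gives $\mathbf v=\mathcal B(\tilde f)\in W^{1,q}_0(\Omega;\R^3)$ with $\operatorname{div}\mathbf v=\tilde f$ and $\|\mathbf v\|_{W^{1,q}}\le C\|f\|_{L^q}$. This field does not vanish on the holes $T_{\e,k}$, so it must be corrected cell by cell.

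\textbf{Step 2 (local correction near each hole).} Fix $k\in K_\e$. In the ball $B_k:=B(x_{\e,k},\delta_2\e)$, split $\mathbf v$ through a cutoff. Let $\chi_k$ be a cutoff equal to $1$ on $B(x_{\e,k},\delta_1 a_\e)$, supported in $B(x_{\e,k},2\delta_1 a_\e)$, with $|\nabla\chi_k|\le C a_\e^{-1}$. We would like to replace $\mathbf v$ by $(1-\chi_k)\mathbf v$ plus a corrector $\mathbf w_k$ supported in the annulus $A_k=B(x_{\e,k},2\delta_1a_\e)\setminus T_{\e,k}$. The corrector should satisfy $\operatorname{div}\mathbf w_k=\operatorname{div}(\chi_k\mathbf v)-\chi_k\tilde f$. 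The right-hand side has mean zero on $A_k$, since $\chi_k\mathbf v$ is compactly supported and $\tilde f=0$ on $T_{\e,k}$, and that integral identity gives exactly the compatibility condition.

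Rather than cutting off at scale $a_\e$, it is safer to follow \cite{DFL17}: subtract the average of $\mathbf v$ over an annulus at scale $\e$ and use Poincaré. Concretely, write $\mathbf v=(\mathbf v-\langle\mathbf v\rangle_k)+\langle\mathbf v\rangle_k$ on $B_k$. The oscillatory part is handled at scale $\e$ with a cutoff of gradient $\e^{-1}$; Poincaré on the $\e$-cell makes this cost only $C\|\nabla\mathbf v\|_{L^q(B_k)}$.

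The constant part $\langle\mathbf v\rangle_k$ needs a cutoff at scale $a_\e$. Choose $\chi_k$ with $\nabla\chi_k$ supported on the annulus $\{\delta_1a_\e<|x-x_{\e,k}|<2\delta_1a_\e\}$. Apply the Bogovskii operator on a reference annulus rescaled by $a_\e$; its constant is scale invariant for $\|\nabla\cdot\|_{L^q}$ in terms of $\|\cdot\|_{L^q}$. The resulting local field $\mathbf w_k\in W^{1,q}_0$ on a domain avoiding $T_{\e,k}$ then satisfies
\[
\|\nabla\mathbf w_k\|_{L^q}\le C\big(\|\nabla\mathbf v\|_{L^q(B_k)}+|\langle\mathbf v\rangle_k|\,a_\e^{\frac 3q-1}+\|f\|_{L^q(B_k)}\big).
\]

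\textbf{Step 3 (summing the cells).} Set $\mathcal B_\e(f)=\mathbf v-\sum_k(\text{cutoff part})+\sum_k\mathbf w_k$. This vanishes on every hole and satisfies $\operatorname{div}=f$ in $\Omega_\e$. The supports are disjoint because the balls $B_k$ lie in distinct cells $\e Q_k$, so the $L^q$ norms add. The only non-trivial term is the constant one. By Hölder at scale $\e$, $|\langle\mathbf v\rangle_k|\le C\e^{-3/q}\|\mathbf v\|_{L^q(\e Q_k)}$, so
\[
\sum_k|\langle\mathbf v\rangle_k|^q a_\e^{3-q}\le C\e^{-3}a_\e^{3-q}\|\mathbf v\|_{L^q(\Omega)}^q=C\e^{(3-q)\alpha-3}\|\mathbf v\|^q_{L^q}.
\]
Taking $q$-th roots and using $\|\mathbf v\|_{L^q}\le C\|f\|_{L^q}$ gives the factor $\e^{\frac{(3-q)\alpha-3}{q}}$. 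All remaining terms are bounded by $C\|f\|_{L^q}$, which yields the stated estimate.

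\textbf{Main obstacle.} The key point is the local Bogovskii problem on the annulus at scale $a_\e$, in particular verifying the mean-zero compatibility of the local right-hand side. The constant part is divergence free, but its cutoff creates a divergence $\langle\mathbf v\rangle_k\cdot\nabla\chi_k$ on the annulus. This has zero mean by the divergence theorem, because $\chi_k$ equals $1$ on the inner sphere and $0$ outside, so the fluxes of the constant vector cancel. The delicate bookkeeping is to check this cancellation and to confirm that the rescaled Bogovskii constant is $\e$-independent, which holds because $T$ is a fixed smooth model hole.
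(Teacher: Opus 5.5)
The paper gives no proof of this proposition; it is quoted from \cite{DFL17}. Your Step 1, the disjoint-support bookkeeping and the capacity count $\sum_k a_\varepsilon^{3-q}|\langle\mathbf v\rangle_k|^q\le C\varepsilon^{(3-q)\alpha-3}\|\mathbf v\|_{L^q}^q$ are fine. Step 2, however, has a genuine gap, and it is not the one you flag: the zero mean of $\langle\mathbf v\rangle_k\cdot\nabla\chi_k$ is immediate.

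\textbf{The gap.} Suppose the oscillatory part is removed as $\psi_k(\mathbf v-\langle\mathbf v\rangle_k)$, with $\psi_k$ a cutoff at scale $\varepsilon$. The divergence you destroy is $\psi_k\tilde f+\nabla\psi_k\cdot(\mathbf v-\langle\mathbf v\rangle_k)$. The first term is supported right down to $\partial T_{\varepsilon,k}$. The second term alone has integral $-\int\psi_k\tilde f\,{\rm d}x$, which is in general nonzero, so it cannot be corrected on the $\varepsilon$-annulus where $\nabla\psi_k\neq0$. The corrector must therefore be a Bogovskii solution in $B(x_{\varepsilon,k},\delta_2\varepsilon)\setminus T_{\varepsilon,k}$. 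That domain is not a dilate of a fixed domain: its two length scales have ratio $\varepsilon^{\alpha-1}\to0$. Scaling gives no $\varepsilon$-independent constant there, and such a bound is itself a one-hole version of the statement you are proving. So your estimate for $\mathbf w_k$, with the term $\|f\|_{L^q(B_k)}$ and $C$ independent of $\varepsilon$, is unsupported.

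Separately, in your first attempt the corrector must satisfy $\operatorname{div}\mathbf w_k=\operatorname{div}(\chi_k\mathbf v)$. Your right-hand side $\operatorname{div}(\chi_k\mathbf v)-\chi_k\tilde f=\nabla\chi_k\cdot\mathbf v$ loses $\chi_k\tilde f$ outside the hole, and its mean is $-\int\chi_k\tilde f\,{\rm d}x$, not zero.

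\textbf{The repair.} The idea you abandoned is the one that works. Let $A_k:=x_{\varepsilon,k}+a_\varepsilon\big(B(0,2\delta_1)\setminus T\big)$, which is a genuine dilate, and set $\mathbf z_k:=\chi_k\mathbf v-\mathcal B_{A_k}\big(\operatorname{div}(\chi_k\mathbf v)\big)$. This is admissible because $\int_{A_k}\operatorname{div}(\chi_k\mathbf v)\,{\rm d}x=-\int_{T_{\varepsilon,k}}\tilde f\,{\rm d}x=0$. Then $\mathcal B_\varepsilon(f):=\mathbf v-\sum_k\mathbf z_k$ vanishes on the holes and has divergence $f$ in $\Omega_\varepsilon$. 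Moreover, $\|\nabla\mathbf z_k\|_{L^q}\le C\big(\|\nabla\mathbf v\|_{L^q(B_k)}+\|f\|_{L^q(B_k)}+a_\varepsilon^{-1}\|\mathbf v\|_{L^q(B(x_{\varepsilon,k},2\delta_1a_\varepsilon))}\big)$.

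Your mean/oscillation splitting belongs in the estimate of the last term, not in the construction. Take $\langle\mathbf v\rangle_k$ to be the average over $B_k$.
\begin{itemize}
\item The mean part gives exactly your capacity sum.
\item The oscillatory part needs Sobolev--Poincar\'e on $B_k$. Plain Poincar\'e only yields $(\varepsilon/a_\varepsilon)\|\nabla\mathbf v\|_{L^q(B_k)}=\varepsilon^{1-\alpha}\|\nabla\mathbf v\|_{L^q(B_k)}$, which is too large whenever $q<3(\alpha-1)$.
\item For $q<3$, H\"older on the small ball and the $L^{q^*}$ Sobolev--Poincar\'e inequality give $a_\varepsilon^{-1}\|\mathbf v-\langle\mathbf v\rangle_k\|_{L^q(B(x_{\varepsilon,k},2\delta_1a_\varepsilon))}\le C\|\mathbf v-\langle\mathbf v\rangle_k\|_{L^{q^*}(B_k)}\le C\|\nabla\mathbf v\|_{L^q(B_k)}$.
\item For $q\ge3$, use $L^s$ with $s\ge3(\alpha-1)$ (take $s=\infty$ if $q>3$). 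This produces the factor $(\varepsilon/a_\varepsilon)^{1-\frac3q+\frac3s}\le\varepsilon^{\frac{(3-q)\alpha-3}{q}}$.
\end{itemize}
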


\subsection{Improved regularity for shear thickening fluids and shear thinning fluids}
    To derive the convergence rates, higher regularity of $\mathbf{u}$ is often needed. Here, we consider the boundary value problem for steady incompressible viscous non-Newtonian flows with $\mathbb{S}$ satisfying \eqref{power-CY} and $\mu_1=0$:
    \begin{equation}\label{problem}
        \begin{cases}
        -{\rm div}\, \mathbb{S}(D\mathbf{u}) +(\mathbf{u}\cdot\nabla)\mathbf{u} +\nabla p=\mathbf{f},& {\rm in}\;\O, \\
        {\rm div} \,\mathbf{u}=0,& {\rm in}\;\O,\\
        \mathbf{u}=0,& {\rm on}\;\partial\O,
        \end{cases}
    \end{equation}
    where $\O \subset \mathbb{R}^3$ is a bounded domain with $C^{2,1}$ boundary, and the external force $\mathbf{f}$ belongs to $L^{\infty}(\O;\R^{3})$. Previous results in \cite{DRW10} and \cite{BGMS12} guarantee the existence of a weak solution $\mathbf{u}\in W_0^{1,r}(\O)$ to problem \eqref{problem}.  Actually, the regularity can be improved. Firstly, we recall the following propositions for the shear-thickening fluids shown in \cite{BKR11}:    
    \begin{proposition}\label{Shear Thickening Flows}
       Assume $\mathbb{S}(\boldsymbol{\xi})$ takes one of the forms in \eqref{power-CY} with $\delta>0,  \  \mu_0>0, \  \mu_{1} = 0$, and $ r \geq 2$. Let $\Omega \subset \mathbb{R}^3$ be a bounded domain with $C^{2,1}$ boundary, and let $f \in L^\infty(\O)$. Then,  any finite energy  weak solution $\mathbf{u} \in W^{1,r}_0(\O)$ of problem \eqref{problem} satisfies 
        \ba\nonumber
            \mathbf{u}\in W^{1,2r+2}(\Omega). 
        \ea
    \end{proposition}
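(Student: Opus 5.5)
The plan is to follow the regularity strategy for systems with $r$-structure near a $C^{2,1}$ boundary, which is the approach of \cite{BKR11}. Set $\mathbf{F}(D\mathbf{u}):=(\delta+|D\mathbf{u}|)^{\frac{r-2}{2}}D\mathbf{u}$ (or $(\delta^2+|D\mathbf{u}|^2)^{\frac{r-2}{4}}D\mathbf{u}$ for the Carreau--Yasuda form). The aim is to prove $\nabla \mathbf{F}(D\mathbf{u})\in L^2$ in the tangential directions and a weaker integrability in the normal direction, and then to convert these bounds into $D\mathbf{u}\in L^{2r+2}$ by an anisotropic embedding. Korn's inequality then upgrades this to $\mathbf{u}\in W^{1,2r+2}(\Omega)$.

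\textbf{Step 1 (a priori bounds and lower-order terms).} The energy inequality gives $\mathbf{u}\in W^{1,r}_0(\Omega)$, and since $r\ge 2$, also $\mathbf{u}\in W^{1,2}_0\subset L^6$. The convective term $(\mathbf{u}\cdot\nabla)\mathbf{u}$ is treated as part of the right-hand side together with $\mathbf{f}\in L^\infty$. Because $r\ge2$, this term lies in $L^{3r/(3+r)}$ or better, which should suffice for the difference-quotient test below. The pressure is recovered in $L^{r'}$ from Proposition \ref{bogov}.

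\textbf{Step 2 (interior and tangential regularity).} Localize with a cutoff $\xi$ and flatten $\partial\Omega$ by a $C^{2,1}$ chart. Then test the equation with $\Delta_{-h}(\xi^2\Delta_h\mathbf{u})$, where $\Delta_h$ is the difference quotient in a tangential direction; this test function vanishes on the boundary. Two corrections are needed. First, the test function is not divergence free in the curved coordinates, so the pressure term appears; it is bounded using the $L^{r'}$ bound on $p$ together with the $C^{2,1}$ regularity of the chart. Second, the lower-order terms produced by the chart are absorbed in the usual way. The monotonicity inequality
\[
(\mathbb{S}(\boldsymbol{\xi})-\mathbb{S}(\boldsymbol{\zeta})):(\boldsymbol{\xi}-\boldsymbol{\zeta})\ge c|\mathbf{F}(\boldsymbol{\xi})-\mathbf{F}(\boldsymbol{\zeta})|^2
\]
holds for the power-law and Carreau--Yasuda forms. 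Applying it, we expect to obtain
\[
\int \xi^2|\partial_\tau \mathbf{F}(D\mathbf{u})|^2\,\dx+\int\xi^2(\delta+|D\mathbf{u}|)^{r-2}|\partial_\tau D\mathbf{u}|^2\,\dx\le C.
\]
Since $\delta>0$, the second integral also controls $\partial_\tau\nabla\mathbf{u}$ in $L^2$.

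\textbf{Step 3 (normal derivatives, the main obstacle).} We expect this step to be the hardest: difference quotients cannot be taken in the normal direction, so normal derivatives must be recovered from the equation itself. Using $\dive\mathbf{u}=0$, express $\partial_3 u_3$ through tangential derivatives. Then read off $\partial_3\partial_3 u_\tau$ (for $\tau=1,2$) from the tangential components of the momentum equation, where the coefficient matrix $\partial\mathbb{S}/\partial\boldsymbol{\xi}$ is uniformly elliptic with weight $(\delta+|D\mathbf{u}|)^{r-2}$. To remove the pressure, first differentiate tangentially and then use the third equation. This should give only a weighted, weaker bound on $\partial_3\mathbf{F}(D\mathbf{u})$, roughly in $L^{q}$ for some $q<2$ depending on $r$. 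The first approach would be the algebraic elimination of \cite{BKR11}, bounding $(\delta+|D\mathbf{u}|)^{r-2}|\partial_3 D\mathbf{u}|$ by tangential quantities plus $|\mathbf{f}|+|\mathbf{u}||\nabla\mathbf{u}|$.

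\textbf{Step 4 (conclusion).} Apply an anisotropic Sobolev embedding to $\mathbf{F}(D\mathbf{u})$, which has $L^2$ tangential derivatives and weaker normal derivatives. This yields $|\mathbf{F}(D\mathbf{u})|^2\sim|D\mathbf{u}|^r\in L^{\frac{2r+2}{r}}$, that is, $D\mathbf{u}\in L^{2r+2}$. Covering $\Omega$ by finitely many charts and applying Korn's inequality in $L^{2r+2}$ then completes the argument.
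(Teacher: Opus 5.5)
The paper does not prove this proposition; it quotes it from \cite{BKR11}. Your outline (tangential difference quotients after flattening, normal derivatives recovered algebraically from the equation, an anisotropic embedding) is that strategy. As written, though, two steps would not go through.

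\textbf{The pressure in Step 2.} In flattened coordinates, $\operatorname{div}\Delta_{-h}(\xi^2\Delta_h\mathbf{u})$ contains terms of the form $\xi^2(\Delta_{\pm h}A)\,\Delta_h\nabla\mathbf{u}$ (shifts suppressed). Here $A$ is the variable coefficient matrix produced by the $C^{2,1}$ chart, which is not constant in tangential directions. Your left-hand side controls $\Delta_h\nabla\mathbf{u}$ only in $L^2$ (via $\delta>0$ and Korn). It does not control it in $L^r$ uniformly in $h$ when $r>2$, so pairing with $p\in L^{r'}$, $r'<2$, does not close. Moving the difference quotient onto $p$ instead requires tangential difference quotients of the pressure, and an estimate of these is the missing ingredient. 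It comes from $\nabla\Delta_h p=\Delta_h\big(\mathbf{f}+\operatorname{div}\mathbb{S}(D\mathbf{u})-\cdots\big)$ and a local Ne\v{c}as (negative-norm) inequality on half-balls, which give $\|\xi\Delta_h p\|_{L^{r'}}\lesssim\|\Delta_h\mathbb{S}(D\mathbf{u})\|_{L^{r'}}+\cdots$. Combined with $|\Delta_h\mathbb{S}(D\mathbf{u})|\lesssim(\delta+|D\mathbf{u}|+|D\mathbf{u}(\cdot+h)|)^{\frac{r-2}{2}}|\Delta_h\mathbf{F}(D\mathbf{u})|$, the pressure term is $\lesssim\|D\mathbf{u}\|_{L^r}^{\frac{r-2}{2}}\|\Delta_h\mathbf{F}\|_{L^2}$ and can be absorbed. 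The same bound for $\partial_\tau p$ is exactly what Step 3 needs, because the tangential momentum equations contain $\partial_\tau p$.

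Similarly, H\"older with $(\mathbf{u}\cdot\nabla)\mathbf{u}\in L^{3r/(3+r)}$ is not enough against $\Delta_{-h}(\xi^2\Delta_h\mathbf{u})$, which you control only in $L^2$. Instead, shift the difference quotient onto the product and use solenoidality (up to chart errors) to remove $\int\xi^2(\mathbf{u}\cdot\nabla)\Delta_h\mathbf{u}\cdot\Delta_h\mathbf{u}$.

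\textbf{The exponent in Steps 3--4.} This is the actual content of the statement, and Steps 3--4 assert $2r+2$ rather than derive it; the normal integrability is also not a fixed $q$. Up to chart terms, the elimination gives
$|\partial_3\mathbf{F}(D\mathbf{u})|\lesssim(\delta+|D\mathbf{u}|)^{\frac{r-2}{2}}|\partial_\tau\nabla\mathbf{u}|+\delta^{-\frac{r-2}{2}}\big(|\partial_\tau p|+|\mathbf{f}|+|\mathbf{u}||\nabla\mathbf{u}|\big)$.
By H\"older, both the first term and $\partial_\tau p$ (through $\partial_\tau\mathbb{S}$) lie only in $L^q$ with $\frac1q=\frac12+\frac{r-2}{2s}$ when $D\mathbf{u}\in L^s$. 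The anisotropic (Troisi) inequality with exponents $(2,2,q)$ then gives $\mathbf{F}\in L^{3q}$, i.e.\ $D\mathbf{u}\in L^{3rs/(s+r-2)}$. Starting from $s=r$ this yields only $\frac{3r^2}{2r-2}<2r+2$ for $r>2$. The value $2r+2$ is precisely the fixed point of $s\mapsto\frac{3rs}{s+r-2}$, where $q=\frac{4(r+1)}{3r}$.

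So you must close the estimate at that fixed point, in one of two ways.
\begin{itemize}
\item Directly: the right-hand side carries $\|D\mathbf{u}\|_{L^{2r+2}}^{(r-2)/6}$ against $\|D\mathbf{u}\|_{L^{2r+2}}^{r/2}$ on the left. This presupposes finiteness, which for an arbitrary, possibly non-unique, weak solution cannot be supplied by an approximation argument.
\item By iteration: run $s_{k+1}=\frac{3rs_k}{s_k+r-2}$ from $s_0=r$ with $k$-independent constants, and pass to the limit by Fatou.
\end{itemize}
Without this bookkeeping, your plan gives no reason why the outcome is $2r+2$ rather than some other exponent.
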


   For the shear thinning fluids with $1<r<2$, similar improved regularity can be shown under certain smallness assumptions on the external force. Crispo and Grisanti  \cite{CG08} first considered the regularity of weak solutions to \eqref{problem} with $\mathbb{S}$ satisfying the power-law (\eqref{power-CY} with $\delta=0$). Guerra \cite{G12} subsequently generalized this result to the Carreau-Yasuda law (\eqref{power-CY} with $\delta>0$). Here, we recall the results presented in \cite{G12}:
    \begin{proposition}\label{shear thinning fluids}
       Assume $\mathbb{S}(\boldsymbol{\xi})$ takes one of the forms in \eqref{power-CY} with $\delta>0, \ \mu_0>0, \ \mu_{1} = 0$, and  $1<r<2$. Let $\O\subset\mathbb{R}^3$ be a bounded domain with $C^{1,\gamma_0}$ boundary with $\gamma_0=1-\frac{3}{q}$ and $q>3$, and let $f \in L^q(\O)$. If $\|\mathbf{f}\|_{L^{q}(\O)}\le\kappa$, where $\kappa$ is a sufficiently small positive constant depending on $q,\gamma_0, r, \mu_0, \delta$ and $\O$, then there exists a unique finite energy weak solution $\uu \in C^{1,\gamma} $ to problem \eqref{problem} for any $\gamma<\gamma_0$ such that 
        \ba\label{r<2regularity}
            \|\mathbf{u}\|_{C^{1,\gamma}(\overline{\O})}\le k_0 \|\mathbf{f}\|_{L^\infty(\O)}.
    \nn    \ea
     \end{proposition}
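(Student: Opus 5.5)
The plan is a perturbative fixed-point argument around a linear Stokes system. It rests on two facts: $\delta>0$, so $\mathbb{S}$ is non-degenerate and smooth near $\boldsymbol{\xi}=0$; and the data are small. I treat both forms in \eqref{power-CY} together. Set $\nu:=\mu_0\delta^{r-2}>0$ and write
$$\mathbb{S}(\boldsymbol{\xi})=\nu\boldsymbol{\xi}+\mathbb{N}(\boldsymbol{\xi}),\qquad \mathbb{N}(\boldsymbol{\xi})=\mu_0\big[(\delta+|\boldsymbol{\xi}|)^{r-2}-\delta^{r-2}\big]\boldsymbol{\xi}$$
(with the obvious change for the Carreau--Yasuda form). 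For $|\boldsymbol{\xi}|\le \delta/2$ the map $\mathbb{N}$ satisfies $|\mathbb{N}(\boldsymbol{\xi})|\le C|\boldsymbol{\xi}|^2$ and $|\mathbb{N}(\boldsymbol{\xi})-\mathbb{N}(\boldsymbol{\zeta})|\le C(|\boldsymbol{\xi}|+|\boldsymbol{\zeta}|)|\boldsymbol{\xi}-\boldsymbol{\zeta}|$. Hence, on the set $\|\boldsymbol{\xi}\|_{C^{0,\gamma}}\le\delta/2$, the superposition operator $\boldsymbol{\xi}\mapsto\mathbb{N}(\boldsymbol{\xi})$ is quadratically small and locally Lipschitz in $C^{0,\gamma}$. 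Since $\dive\uu=0$ gives $\dive(2D\uu)=\Delta\uu$, problem \eqref{problem} becomes the Stokes system
$$-\tfrac{\nu}{2}\Delta\uu+\nabla p=\dive\big(\mathbf{F}+\mathbb{N}(D\uu)-\uu\otimes\uu\big),\qquad \dive\uu=0,\quad \uu|_{\partial\O}=0,$$
where $\mathbf{f}=\dive\mathbf{F}$. We take $\mathbf{F}=\nabla\Delta^{-1}\mathbf{f}$ (a Newtonian potential, componentwise), so $\mathbf{F}\in W^{1,q}\subset C^{0,\gamma_0}$ with $\|\mathbf{F}\|_{C^{0,\gamma_0}}\le C\|\mathbf{f}\|_{L^q}$, using $q>3$ and $\gamma_0=1-\frac3q$.

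The linear ingredient is the Schauder estimate for the Stokes system in divergence form on a $C^{1,\gamma_0}$ domain. For $\gamma<\gamma_0$ it reads
$$\|\vv\|_{C^{1,\gamma}(\overline\O)}+\|\pi\|_{C^{0,\gamma}}\le C\|\mathbf{G}\|_{C^{0,\gamma}}$$
for the solution of $-\frac{\nu}{2}\Delta\vv+\nabla\pi=\dive\mathbf{G}$, $\dive\vv=0$, $\vv|_{\partial\O}=0$. I would quote this from the linear theory (Agmon--Douglis--Nirenberg / Solonnikov type estimates, via flattening the boundary). Using this estimate I define the map $\Phi(\mathbf{w})=\vv$, where $\vv$ solves the Stokes problem with $\mathbf{G}=\mathbf{F}+\mathbb{N}(D\mathbf{w})-\mathbf{w}\otimes\mathbf{w}$. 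On the ball $X_\rho=\{\mathbf{w}:\|\mathbf{w}\|_{C^{1,\gamma}}\le\rho\}$ with $\rho\le\delta/2$ this gives
$$\|\Phi(\mathbf{w})\|_{C^{1,\gamma}}\le C\kappa+C\rho^2,\qquad \|\Phi(\mathbf{w}_1)-\Phi(\mathbf{w}_2)\|_{C^{1,\gamma}}\le C\rho\,\|\mathbf{w}_1-\mathbf{w}_2\|_{C^{1,\gamma}}.$$
Choosing $\rho=2C\kappa$ and $\kappa$ small makes $\Phi$ a contraction of $X_\rho$. Its fixed point is a solution $\uu\in C^{1,\gamma}$ with $\|\uu\|_{C^{1,\gamma}}\le k_0\|\mathbf{f}\|$. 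This solution is clearly a weak solution, and it satisfies the energy equality, since it is regular enough to be tested with itself.

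The main obstacle is the uniqueness statement, namely that every finite-energy weak solution $\vv\in W_0^{1,r}$ coincides with this $\uu$. The difficulty is that $\vv$ is a priori only $W^{1,r}$ with $r<2$, so it may have large gradients where the weight $(\delta+|D\vv|)^{r-2}$ degenerates. I would first use the energy inequality \eqref{energy-eq} and coercivity \eqref{coer} to get $\|\vv\|_{W^{1,r}}\le C\kappa^{1/(r-1)}$, which is small. Then I would test the difference of the equations with $\mathbf{w}=\vv-\uu$. This is admissible because $r>\frac65$ can be assumed here (the paper has $r>\frac32$), so $\vv\otimes\vv:\nabla\uu$ is integrable; combining the energy inequality for $\vv$ with the equation for the regular $\uu$, in the Serrin-type manner, handles the fact that $\vv$ itself is not an admissible test function. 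The standard strong-monotonicity bound for Carreau-type tensors gives
$$\int(\mathbb{S}(D\vv)-\mathbb{S}(D\uu)):D\mathbf{w}\ \ge\ c\int(\delta+|D\uu|+|D\vv|)^{r-2}|D\mathbf{w}|^2 .$$
The convective remainder is $\int(\mathbf{w}\cdot\nabla)\uu\cdot\mathbf{w}$, bounded by $\|\nabla\uu\|_\infty\|\mathbf{w}\|_2^2\le C\kappa\|\mathbf{w}\|_2^2$. To absorb it, I would split $\O$ into the set $\{|D\vv|\le 1\}$, where the weight is bounded below, and its complement, whose measure is small by the $W^{1,r}$ smallness. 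On the complement I would use the elementary inequality $(\delta+|D\uu|+|D\vv|)^{r-2}|D\mathbf{w}|^2\ge c\min\{|D\mathbf{w}|^2,|D\mathbf{w}|^r\}$, together with Korn and Sobolev applied to $\|\mathbf{w}\|_{L^2}$. Smallness of $\kappa$ then forces $\mathbf{w}=0$. If this splitting turns out to be too lossy, my fallback is to run the comparison in the Orlicz-type quantity $\int\varphi_{|D\uu|}(|D\mathbf{w}|)$ associated with the shifted $N$-function, for which Sobolev--Korn inequalities are available.
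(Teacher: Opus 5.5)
The paper gives no proof of this proposition. It imports the result from \cite{G12} (extending \cite{CG08}); the explicit $\kappa$ and $k_0$ in the subsequent remark, built from domain constants $d,d_1$, point to a small-data iteration on linear estimates. Your existence part is of the same perturbative type and works. You split $\mathbb{S}=\nu\boldsymbol{\xi}+\mathbb{N}(\boldsymbol{\xi})$, write $\ff=\dive\mathbf{F}$ with $\mathbf{F}\in W^{1,q}\subset C^{0,\gamma_0}$ (which explains $\gamma_0=1-\frac3q$), and contract in a small ball of $C^{1,\gamma}$. Your uniqueness part is a self-contained weak--strong (Serrin-type) comparison against \emph{every} finite-energy weak solution. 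This is the form of uniqueness the paper tacitly needs in Section 4, where the proposition is applied to the homogenized limit $\uu$, which is known a priori only as a weak solution. Your expansion does give $\int_\O(\mathbb{S}(D\vv)-\mathbb{S}(D\uu)):D\mathbf{w}\,\dx\le\|\nabla\uu\|_{L^\infty}\|\mathbf{w}\|_{L^2}^2$. It also closes without any splitting or Orlicz fallback. Set $M=\int_\O\min\{|D\mathbf{w}|^2,|D\mathbf{w}|^r\}\,\dx$, which your pointwise bound controls once $\|D\uu\|_{L^\infty}\le\delta$. H\"older on $\{|D\mathbf{w}|\le 1\}$ gives $\|D\mathbf{w}\|_{L^r}^r\le C(M^{r/2}+M)$. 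Korn and $W^{1,r}_0\subset L^2$ then yield $\|\mathbf{w}\|_{L^2}^2\le C(M+M^{2/r})\le CM$, because $M\le\|D\mathbf{w}\|_{L^r}^r\le C\kappa^r\le 1$. Hence $cM\le C\kappa M$, and $\mathbf{w}=0$ for $\kappa$ small.

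A few details need fixing, none of them fatal.

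First, for $1<r<2$ and $\delta>0$ the energy inequality gives $\|\nabla\vv\|_{L^r}\le C\kappa$, not $C\kappa^{1/(r-1)}$. The latter is the $\delta=0$ scaling and cannot hold here: small data put you in the quasi-Newtonian regime, where $\|\nabla\vv\|_{L^r}$ is typically of order $\|\ff\|$. Only smallness is used, so nothing breaks.

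Second, the $C^{0,\gamma}$-Lipschitz bound for $\boldsymbol{\xi}\mapsto\mathbb{N}(\boldsymbol{\xi})$ needs $D\mathbb{N}$ to be Lipschitz near $0$ with $D\mathbb{N}(0)=0$, not just the pointwise bounds you list. This does hold for both laws; note that $(\delta+|\boldsymbol{\xi}|)^{r-2}\boldsymbol{\xi}$ is $C^{1,1}$ but not $C^2$ at $0$.

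Third, to get $\|\uu\|_{C^{1,\gamma}}\le k_0\|\ff\|$ rather than $\le 2C\kappa$, use the a posteriori bound $\|\uu\|\le C\|\mathbf{F}\|_{C^{0,\gamma}}+C\|\uu\|^2$ together with $C\|\uu\|\le\frac12$.

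Fourth, the linear estimate you need (divergence-form data on a $C^{1,\gamma_0}$ boundary) is a Campanato-type boundary estimate. It is not Agmon--Douglis--Nirenberg proper, which is formulated for smoother boundaries and non-divergence data, so cite it accordingly.

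Finally, uniqueness among all weak solutions requires $\mathbf{w}\in L^2$, i.e.\ $r\ge\frac65$. This covers the paper's range $r>\frac32$. For $1<r<\frac65$ your argument only gives uniqueness within the small $C^{1,\gamma}$ ball.
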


    \begin{remark}
    \item 
        \begin{itemize}
            \item In \cite{G12}, the authors show that the constant $\kappa$ could be given as  
       $$
                \kappa = \min\big\{1,\frac{1}{d_1(2^{a+4-r}d(a+4-r)^{\frac{1}{a+3-r}})},\frac{1}{k_0(1+k_0)^{2-r}(2-r+2d)}\big\},       \       k_0 = 2d(\frac{a+4-r}{a+3-r}),
           $$
            where $a=a(r, \gamma_{0})$ is a positive real number greater than $2$,  $d$ and $d_1$ are suitable positive constants depending on $\Omega$.
        \end{itemize}
    \end{remark}
    
    \section{Proof of Theorem \ref{thm-1}}
   This section is devoted to proving  the qualitative homogenization result in Theorem \ref{thm-1}. 
   
\subsection{Uniform estimates}\label{Uniform estimates}

By the energy inequality \eqref{energy-eq} and coercivity \eqref{coer}, we have
\ba\label{energy in}
\int_{\Omega}(\delta+|D\tilde{\bf u}_\e|)^{r-2}|D\tilde{\bf u}_\e|^2\,\dx
\leq \int_\Omega\mathbf{f}\cdot \tilde{\bf u}_\e\,{\rm d}x \leq  \|\ff\|_{L^{r'}(\Omega)} \| \tilde{\bf u}_\e\|_{L^r(\Omega)}.
\ea
Firstly, for $r\geq 2$,  using the Poincar\'e inequality gives
\ba\nonumber
\|D\tilde{\bf u}_\e\|_{L^r(\Omega)}^r\leq C\|\mathbf{f}\|_{L^\infty(\O)}\|\nabla \tilde{\bf u}_\e\|_{L^r(\Omega)},
\ea
which, together with Korn inequality (see for example \cite[Chapter 10]{FN09}), yields
\ba\label{uenorm}
\|\tilde{\bf u}_\varepsilon\|_{W^{1,r}_0(\O)}\leq C \|\mathbf{f}\|_{L^\infty(\O)}^{\frac{1}{r-1}}.
\ea

\medskip

Next, consider the case $1<r<2$. Let $\O_{\delta}=\{x\in \Omega : |D\tilde{\bf u}_\e|\geq \delta\}$. Then for $x\in \O_\delta$, there holds
$(\delta+|D\tilde{\bf u}_\e|)^{r-2}\geq (2|D\tilde{\bf u}_\e|)^{r-2}.$
Hence, by \eqref{energy in} and the Poincar\'e inequality we have
\ba\label{dud1}
\int_{\Omega_\delta}|D\tilde{\bf u}_\e|^r\,{\rm d}x\leq C \|\nabla \tilde{\bf u}_\e\|_{L^r(\Omega)}.
\ea
Obviously, 
\ba\label{dux1}
\int_{\Omega\setminus\Omega_\delta}|D\tilde{\bf u}_\e|^r\,{\rm d}x\leq \delta^r|\Omega|.
\ea
Consequently, adding the estimates in \eqref{dud1} and \eqref{dux1} gives
\ba\nonumber
\|D\tilde{\bf u}_\e\|_{L^r(\Omega)}^r\leq C(1+\|\nabla \tilde{\bf u}_\e\|_{L^r(\Omega)}),
\ea
which, together with the Korn inequality, implies
$
\|\tilde{\bf u}_\varepsilon\|_{W^{1,r}_0(\O)}\leq C.
$
Therefore, summarizing both estimates for $r\geq 2$ and for $1<r<2$, we  always have 
\ba\label{est of ue}
\|\tilde{\bf u}_\varepsilon\|_{W^{1,r}_0(\O)}\leq C.
\ea

\medskip

Next we give the estimates of the stress tensor $\mathbb{S}(D \mathbf{u}_\e)$. By virtue of \eqref{growth}, 
\ba\nonumber
|\mathbb{S}(D\tilde{\mathbf{u}}_\e)|\leq C (|D\tilde{\mathbf{u}}_\e|+|D\tilde{\mathbf{u}}_\e|^{r-1})  \ \mbox{for $r\geq 2$}; \quad |\mathbb{S}(D\tilde{\mathbf{u}}_\e)|\leq C |D\tilde{\mathbf{u}}_\e|^{r-1}   \ \mbox{for $r < 2$}.
\ea
It follows from \eqref{est of ue} that
\ba\label{est-S}
\|\mathbb{S}(D\tilde{\mathbf{u}}_\e)\|_{L^{r'}(\O)} \leq C (\|D\tilde{\mathbf{u}}_\e\|_{L^{r}(\O)}+\|D\tilde{\mathbf{u}}_\e\|_{L^{r}(\O)}^{r-1}) \leq C.
\ea

\medskip

Finally, we derive the estimates of the pressure.  The classical theory for incompressible fluids ensures the existence of pressure  $p_{\e} \in L^{q}_{0}(\Omega_{\e})$ for some $q>1$; see \cite[Lemma II.2.2.2]{Sohr01} for details. The key is to derive its uniform bound.  By  estimate \eqref{est of ue} and Sobolev embedding, we have 
$$\|\tilde{\mathbf{u}}_\e\|_{L^{r^*}(\Omega)}\leq C, \quad  r^{*} = \frac{3r}{3-r}.$$

\medskip

    We start with the case $r\geq \frac 95$, which ensures   $2r'\leq r^*$. Then
    \ba\label{2r u}
    \|\tilde{\mathbf{u}}_\e\otimes \tilde \vu_{\e}\|_{L^{r'}(\O)} \leq \|\tilde{\mathbf{u}}_\e\|_{L^{2r'}(\Omega)}^{2} \leq \|\tilde{\mathbf{u}}_\e\|_{L^{r^{*}}}^{2} \leq C.
    \ea
    For arbitrary $\phi\in L^r({\Omega_\e})$, we introduce $\bfvarphi_\e: =\mathcal{B}_\e\big(\phi-\frac{1}{|\O_\e|}\int_{\O_\e} \phi \,{\rm d}x \big)$ which, due to Proposition \ref{bogov_e}, satisfies
    \ba\nonumber
        \|\bfvarphi_\e\|_{W_0^{1,r}(\O_\e)}\le C(1 + \varepsilon^{\frac{(3-r)\alpha-3}{r}})\|\phi\|_{L^r(\O_\e)}.
    \ea
  Using $\bfvarphi_\e$ as the test function yields
    \ba\nonumber
        \langle\nabla p_\e,\bfvarphi_\e\rangle_{\O_\e}=\langle {\rm div}\,\big(\mathbb{S}(D\mathbf{u}_\varepsilon)\big) - (\mathbf{u}_\varepsilon\cdot\nabla)\mathbf{u}_\varepsilon + \mathbf{f} , {\bfvarphi}_\e\rangle_{\O_\e}.
    \ea
Thus, by \eqref{est-S} and  \eqref{2r u}, we obtain
    \ba\nonumber
        \big|\langle\nabla p_\e,\bfvarphi_\e\rangle_{\O_\e}\big|
        & \le
        \|\mathbb{S}(D{\mathbf{u}}_\e)\|_{L^{r'}(\O_\e)}\|\nabla \bfvarphi_\e\|_{L^{r}(\O_\e)}
        \\
        &\quad 
        + \|{\mathbf{u}}_\e\otimes \vu_{\e}\|_{L^{r'}(\O_\e)} \|\nabla \bfvarphi_\e\|_{L^r(\O_\e)}+
        \|\mathbf{f}\|_{L^{r'}(\O_\e)}\|\bfvarphi_\e\|_{L^r(\O_\e)}
        \\
        & \le C \|\bfvarphi_\e\|_{W^{1,r}_0(\O_\e)} \leq    C (1 + \varepsilon^{\frac{(3-r)\alpha-3}{r}}) \|\phi\|_{L^r(\O_\e)}.
    \ea
Hence, by the zero mean property of $p_{\e}$, 
    \ba\nonumber
       \big|\int_{\O_\e} p_\e\,\phi\,{\rm d}x\big | = \big | \int_{\O_\e} p_\e\,{\rm div}\,\bfvarphi_\e\,{\rm d}x\big | =\big|\langle\nabla p_\e,\bfvarphi_\e\rangle_{\O_\e}\big| \le C (1 + \varepsilon^{\frac{(3-r)\alpha-3}{r}}) \|\phi\|_{L^r(\O_\e)}.
    \ea
    This means 
    \ba\label{est-p-r>2}
   \big | p_{\e}\|_{L^{r'}(\Omega_{\e})} \leq C, \quad \mbox{whenever $\frac{9}{5}\leq  r < 3 -\frac{3}{\a}$}.
    \ea

\medskip

    We next consider the case $ \frac{6(\a - 1)}{4\alpha-5}< r<\frac{9}{5}$, for which
    $$
    \frac{1}{r} - \frac{1}{(\frac{r^{*}}{2})'} = \frac{1}{r} - \big(1-\frac{2}{r^{*}}\big) = 3 \big(\frac{1}{r} - \frac{5}{9}\big) >0
    $$
    and
    \ba\label{sigma-star}
    \sigma_{*} := \frac{(3-{(\frac{r^*}{2})'})\alpha-3}{{(\frac{r^*}{2})'}} = \frac{(3-\frac{3r}{5r-6})\alpha-3}{\frac{3r}{5r-6}} = \frac{(4\a - 5) r - 6(\a -1)}{r} >0.
    \ea
     We see the constraint $ r > \frac{6(\a - 1)}{4\alpha-5} $  ensures $\sigma_{*} >0$. For arbitrary $\phi\in L^{(\frac{r^*}{2})'}(\O_\e)$, let
    $
        \bfvarphi_\e: =\mathcal{B}_\e\big(\phi-\frac{1}{|\O_\e|}\int_{\O_\e} \phi \,{\rm d}x \big).
     $
     Thus, due to the fact that $r<(\frac{r^*}{2})'$ whenever $r<\frac{9}{5}$, we have
     \ba\nonumber
        \big|\langle\nabla p_\e,\bfvarphi_\e\rangle_{\O_\e}\big|
        & \le
        \|\mathbb{S}(D{\mathbf{u}}_\e)\|_{L^{r'}(\O_\e)}\|\nabla \bfvarphi_\e\|_{L^{r}(\O_\e)}
        \\
        & \quad        + \|{\mathbf{u}}_\e\otimes \vu_{\e}\|_{L^{\frac{r^*}{2}}(\O_\e)} \|\nabla \bfvarphi_\e\|_{L^{(\frac{r^*}{2})'}(\O_\e)}+
        \|\mathbf{f}\|_{L^{\frac{r^*}{2}}(\O_\e)}\|\bfvarphi_\e\|_{L^{(\frac{r^*}{2})'}(\O_\e)}
        \\
        & \le C \|\bfvarphi_\e\|_{W^{1,{(\frac{r^*}{2})'}}_0(\O_\e)} \le C(1+\e^{\sigma_*})\|\phi\|_{L^{(\frac{r^*}{2})'}(\O_\e)} \leq C \|\phi\|_{L^{(\frac{r^*}{2})'}(\O_\e)} .
    \ea
    Thus, by similar arguments as the derivation of \eqref{est-p-r>2}, we obtain 
    \ba\label{est-p-r<2}
    \|p_\e\|_{L^{\frac{r^*}{2}}_0(\O_\e)}\leq C , \quad \mbox{whenever $\frac{6(\a - 1)}{4\alpha-5}< r < \frac{9}{5}$}.
    \ea 

\medskip

In conclusion, for $\frac{6(\alpha-1)}{4\alpha-5}<r<3-\frac{3}{\alpha}$, we have the following uniform estimates 
\ba\label{sol est}  
\|\tilde{\mathbf{u}}_\e\|_{W_{0}^{1,r}(\Omega)}  + \| \mathbb{S}(D\tilde{\mathbf{u}}_\e) \|_{L^{r'}(\Omega)} + \|\tilde{p}_\e \|_{L^{\min\{r',\frac{r^*}{2}\}}(\Omega)} \leq C.
\ea
Consequently, there hold the following convergences up to possible extractions of subsequences
\ba\label{sol convergence}
&\tilde{\mathbf{u}}_\e\to\mathbf{u} \ {\rm weakly \ in} \ W^{1,r}_0(\O),  \ \tilde{\mathbf{u}}_\e\to\mathbf{u} \ {\rm strongly \ in} \ L^{q_0}(\O) \ \mbox{for any $1\le q_0<r^*$}, \\
& \mathbb{S}(D\tilde{\mathbf{u}}_\e)\to \overline{\mathbb{S}(D\mathbf{u})} \quad {\rm weakly \ in} \ L^{r'}(\O),  \ \tilde{p}_\e\to p \ {\rm weakly \ in} \ L^{\min\{r',\frac{r^*}{2}\}}(\O).
\ea

\subsection{Momentum equations in homogeneous domain}\label{Momentum equations}

To show the limit equations by applying the solenoidal Lipschitz truncation, we need to extend the original equations to the whole domain without holes. The following proposition shows that the extension $(\tilde{\bf u}_\e,\tilde p_\e)$ satisfies the original equations in $\O$ up to a small error.
\begin{proposition}\label{steady extend equation}
    Under the assumptions in Theorem \ref{thm-1}, the zero extension $(\tilde{\bf u}_\e,\tilde p_\e)$ satisfies the following equations in the sense of distribution:
    \ba\label{steady extend estimate}
        -{\rm div}\, \mathbb{S}(D\tilde{\mathbf{u}}_\e) +(\tilde{ \mathbf{u}}_\e\cdot\nabla)\tilde{ \mathbf{u}}_\e+\nabla \tilde p_\e=\mathbf{f}+{\bf G}_\e,
    \ea
    where ${\bf G}_\e\in \mathcal{D}'(\O)$ satisfying
    \ba\label{steady  Ge est}
        |\left\langle{\bf G}_\e,\bfvarphi \right\rangle|\leq C\e^\sigma\|\bfvarphi\|_{W^{1,q}(\O)}, \quad \forall\,\bfvarphi\in C_c^\infty(\O;\mathbb{R}^3),
    \ea
    for some $\sigma>0$ and some $\max\{r, r'\}<q <3-\frac{3}{\alpha}$.
\end{proposition}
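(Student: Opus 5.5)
We argue by the standard cut-off construction near the holes. For each hole $T_{\e,k}$ we choose a cut-off $g_{\e,k}\in C^\infty(\R^3)$ with $g_{\e,k}=0$ on $B(x_{\e,k},\delta_1 a_\e)$, $g_{\e,k}=1$ outside $B(x_{\e,k},2\delta_1 a_\e)$, and $|\nabla g_{\e,k}|\le C a_\e^{-1}$. We then set $g_\e=\prod_k g_{\e,k}$ (the supports are disjoint since $a_\e\ll\e$). A direct computation gives, for any $s<3$,
\begin{equation*}
\|1-g_\e\|_{L^s(\O)}^s\le C\e^{-3}a_\e^3,\qquad \|\nabla g_\e\|_{L^s(\O)}^s\le C\e^{-3}a_\e^{3-s}=C\e^{(3-s)\a-3},
\end{equation*}
and this is small precisely when $s<3-\frac3\a$. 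For $\bfvarphi\in C_c^\infty(\O;\R^3)$, the function $g_\e\bfvarphi$ vanishes near the holes, although it is not divergence free in general. This does not matter: the weak formulation \eqref{weak-eq-e} holds for all test functions in $C_c^\infty(\O_\e)$, pressure included, and by density for $g_\e\bfvarphi$ as well.

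We write $\bfvarphi=g_\e\bfvarphi+(1-g_\e)\bfvarphi$ and test the extended equation. This yields
\begin{equation*}
\langle {\bf G}_\e,\bfvarphi\rangle=\int_\O\Big(-\tilde\uu_\e\otimes\tilde\uu_\e:\nabla\big((1-g_\e)\bfvarphi\big)+\mathbb S(D\tilde\uu_\e):D\big((1-g_\e)\bfvarphi\big)-\tilde p_\e\,\dive\big((1-g_\e)\bfvarphi\big)-\ff\cdot(1-g_\e)\bfvarphi\Big)\dx.
\end{equation*}
Here the integrals are effectively over $\O_\e$, since the zero extensions vanish on the holes. Each term is then bounded by Hölder's inequality using \eqref{sol est}. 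We use $\nabla((1-g_\e)\bfvarphi)=-\nabla g_\e\otimes\bfvarphi+(1-g_\e)\nabla\bfvarphi$, and we use Sobolev embedding $W^{1,q}\subset L^\infty$, which holds because we pick $q>3$... This would conflict with $q<3-\frac3\a$. So instead we estimate $\bfvarphi$ in $L^{q^*}$ and $\nabla\bfvarphi$ in $L^q$.

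The terms are handled as follows.

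\emph{Stress term.} We bound $\|\mathbb S\|_{L^{r'}}\big(\|\nabla g_\e\|_{L^{s}}\|\bfvarphi\|_{L^{q^*}}+\|1-g_\e\|_{L^{s'}}\|\nabla\bfvarphi\|_{L^q}\big)$ with $\frac1r=\frac1s+\frac1{q^*}$. Since $\frac1s=\frac1r-\frac1q+\frac13$, we get $s<3-\frac3\a$ provided $q$ is close enough to $3-\frac3\a$ and $r<3-\frac3\a$. The second piece gains a positive power from $|{\rm supp}(1-g_\e)|\le C\e^{3\a-3}$.

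\emph{Pressure term.} It is treated identically, with $L^{\min\{r',r^*/2\}}$ replacing $L^{r'}$.

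\emph{Convective term.} The same argument applies with $\tilde\uu_\e\otimes\tilde\uu_\e\in L^{r^*/2}$. This requires $\nabla g_\e$ in $L^s$ with $\frac1s=1-\frac2{r^*}-\frac1{q^*}$, and a smallness exponent $(3-s)\a-3>0$. Checking this is exactly where $r>\frac{6(\a-1)}{4\a-5}$ enters: it is equivalent to $(3-(\frac{r^*}2)')\a-3>0$ (cf. \eqref{sigma-star}), and the extra integrability of $\bfvarphi$ only helps.

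\emph{Force term.} It is trivially small.

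The main obstacle is the exponent bookkeeping. We need a single $q$ with $\max\{r,r'\}<q<3-\frac3\a$ such that all resulting $s$-exponents stay strictly below $3-\frac3\a$. This is possible because $r'<3-\frac3\a$: indeed $r>\frac32\ge\frac{3-3/\a}{2-3/\a}$ for $\a>3$. If $q^*$ is awkward (for instance when $q\ge 3$ is not allowed), the fallback is to take $q$ very close to $3-\frac3\a$. The exponents are then arranged so that strict inequalities give some $\sigma>0$, taken as the minimum of the finitely many positive powers of $\e$ obtained.
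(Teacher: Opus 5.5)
Your overall route (the cut-off $g_\varepsilon$, the splitting $\bfvarphi=g_\varepsilon\bfvarphi+(1-g_\varepsilon)\bfvarphi$, and term-by-term H\"older estimates against the uniform bounds) is the paper's. The gap is in the exponent bookkeeping. You correctly single it out as the crux, but once you insist on $q<\beta:=3-\frac3\alpha$ your resolution rests on claims that are false.

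\textbf{(1) The choice of $q$.} The inequality $\frac32\ge\frac{\beta}{\beta-1}$ is reversed: $\frac{\beta}{\beta-1}>\frac32$ for every finite $\alpha$, and it equals $\frac95$ for $\alpha=4$. Take $\alpha=4$, $r=1.7$, which is admissible since $\frac{18}{11}<1.7<\frac94$. Then $r'=\frac{17}{7}>\frac94$, so the interval $(\max\{r,r'\},\beta)$ is empty and no admissible $q$ exists. The literal range for $q$ in the statement cannot be met in general.

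\textbf{(2) Stress term.} You set $\frac1s=\frac1r-\frac1q+\frac13$ with $q<\beta$. Then $s<\beta$ requires $\frac1r+\frac13>\frac2\beta$, and $r<\beta$ does not imply this. For $\alpha=4$, $r=2$, every $q\in(2,\frac94)$ gives $s>\frac{18}{7}>\frac94$. Hence $\|\nabla g_\varepsilon\|_{L^s}\to\infty$, and your bound yields no decay.

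\textbf{(3) Convective term.} Having moved the derivative onto $g_\varepsilon$, you need $1-\frac2{r^*}-\frac1{q^*}>\frac1\beta$. For $q<3$ this is strictly stronger than $(\frac{r^*}{2})'<\beta$, and for $q<\beta$ it forces $r>\frac{\beta}{\beta-1}$. So the extra integrability of $\bfvarphi$ does not ``only help'': a finite $q^*$ is less integrability than the $L^\infty$ that would reduce the condition to $\sigma_*>0$. The same objection applies to the pressure term when $r<\frac95$.

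\textbf{The repair.} Drop the upper bound $3-\frac3\alpha$ and take $q<3$ close to $3$ (or $q>3$). As $\frac1{q^*}\to0$, the conditions become exactly the following.
\begin{itemize}
\item For the stress term: $r<3-\frac3\alpha$.
\item For the pressure term, and also for your divergence-form convective term: $(\frac{r^*}{2})'<3-\frac3\alpha$, which is equivalent to $r>\frac{6(\alpha-1)}{4\alpha-5}$.
\end{itemize}
This is what the paper's proof actually delivers for $I_2$ and $I_3$, namely $\max\{r,r'\}<q<3$. It is also all that is used afterwards: \eqref{basic equation} is applied with $q<3$, and only to the Lipschitz functions $\zeta\mathbf v_m^k$, whose $W^{1,q}$ norms are bounded by $C\lambda_{m,k}$ for every $q$.

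\textbf{Where the lower bound on $r$ enters.} The paper keeps the convective term as $\int(\tilde{\mathbf u}_\varepsilon\cdot\nabla)\tilde{\mathbf u}_\varepsilon\cdot(1-g_\varepsilon)\bfvarphi\,{\rm d}x$. This involves only $\|1-g_\varepsilon\|_{L^p}\le C\varepsilon^{3(\alpha-1)/p}$ and costs no capacity condition beyond $r>\frac32$. So your attribution of the lower bound on $r$ to the convective term is off relative to the paper. There the bound enters solely through the pressure:
\begin{itemize}
\item via its uniform $L^{r^*/2}$ bound, obtained from $\mathcal B_\varepsilon$ acting on $L^{(r^*/2)'}$;
\item via the $\nabla g_\varepsilon$ pairing in $I_3$.
\end{itemize}
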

\begin{proof}
    By the description of holes in \eqref{hole description},  there exists a family of cut-off functions $\{g_\e\}_{\e>0}$ satisfying 
    \ba\label{def of ge}
  0\leq g_\e\leq1, \   g_\e=0\ {\rm in}\bigcup_{k\in K_\e}B( x_{\e,k},\delta_1a_\e), \
    g_\e=1\ {\rm in} \,\big(\bigcup_{k\in K_\e}B( x_{\e, k},\delta_2a_\e)\big)^c, \ |\nabla g_\e|\leq C\e^{-\alpha}.
\ea
Then for each $1\leq q<\infty$ there holds
\ba\label{g estimate}
\|g_\e-1\|_{L^q(\O)}\leq C\e^{\frac{3(\alpha-1)}{q}},\quad \|\nabla g_\e\|_{L^q(\O)}\leq C\e^{\frac{(3-q)\alpha-3}{q}}.
\ea

Let ${\bfvarphi}\in C_c^\infty(\O;\mathbb{R}^3)$. To extend the original equations from $\O_\e$ to $\O$, we utilize test functions constructed in \eqref{def of ge} and decompose ${\bfvarphi}$ as
\ba\label{test decomposition}
\bfvarphi=g_\e\bfvarphi + (1-g_\e)\bfvarphi.
\ea
Then $g_\e\bfvarphi$ can be chosen as test functions for the original equations \eqref{1.1} in $\O_\e$. For the remaining part $(1-g_\e)\bfvarphi$, we show that the related terms are small.

Now we estimate the following integral
\ba\nonumber
I^\e:=\int_\O\left(\mathbb{S}(D\tilde{\mathbf{u}}_\e):\nabla\bfvarphi+(\tilde{ \mathbf{u}}_\e\cdot\nabla)\tilde{ \mathbf{u}}_\e\cdot\bfvarphi-\tilde{p}_\e{\rm div}\,\bfvarphi-\mathbf{f}\cdot \bfvarphi\right)\,{\rm d}x.
\ea
By the decomposition \eqref{test decomposition} and the weak formulation \eqref{weak-eq-e}, we can rewrite $I^\e$ as 
\ba\nonumber
I^\e&=\int_{\O_\e}\mathbb{S}(D{\mathbf{u}}_\e):\nabla(g_\e\bfvarphi)+({ \mathbf{u}}_\e\cdot\nabla){ \mathbf{u}}_\e\cdot (g_\e\bfvarphi)-p_\e{\rm div}\,(g_\e\bfvarphi)-\mathbf{f}\cdot (g_\e\bfvarphi)\,{\rm d}x+\sum_{j=1}^4 I_j = \sum_{j=1}^4 I_j ,
\ea
where
\ba\nonumber
&I_1=\int_\O\mathbb{S}(D\tilde{\mathbf{u}}_\e):(\nabla\bfvarphi) (1-g_\e)-\mathbb{S}(D\tilde{\mathbf{u}}_\e):(\nabla g_\e\otimes \bfvarphi)\,{\rm d}x,\\
&I_2=\int_\O(\tilde{ \mathbf{u}}_\e\cdot\nabla)\tilde{ \mathbf{u}}_\e\cdot \bfvarphi(1-g_\e)\,{\rm d}x,\\
&I_3=\int_\Omega \tilde{p}_\e{\rm div}\,\big((g_\e-1)\bfvarphi\big)\,{\rm d}x,\\
&I_4=\int_\O\mathbf{f}\cdot \bfvarphi(g_\e-1)\,{\rm d}x .
\ea

 For $I_1$, by \eqref{sol est} and \eqref{g estimate}, together with the assumption $r<3-\frac{3}{\alpha}$, we have 
\ba\label{I1-1}
|I_1| & \leq \|\mathbb{S}(D\tilde{\mathbf{u}}_\e)\|_{L^{r'}(\O)}\big(\|1-g_\e\|_{L^{p_1}(\O)}\|\nabla \bfvarphi\|_{L^{q_1}(\O)} +\|\bfvarphi\|_{L^{p_1}(\O) }\|\nabla g_\e\|_{L^{q_1}(\O)} \big) \\
& \leq  C\e^{\frac{3(\alpha-1)}{p_{1}}}  \|\nabla \bfvarphi\|_{L^{q_1}(\O)} + C  \e^{\frac{(3-q_1)\alpha-3}{q_1}}\| \bfvarphi\|_{L^{p_1}(\O)},
\ea
where we choose $r<q_1< 3 - \frac{3}{\a}<3$ and $r<p_{1}<\infty$ such that
\ba\nonumber
\frac{1}{p_{1}} + \frac{1}{q_{1}} = 1 -  \frac{1}{r'} = \frac1r.
\ea
This ensures the powers of $\e$ in \eqref{I1-1} are positive:
$
\frac{3(\alpha-1)}{p_{1}}>0, \frac{(3-q_1)\alpha-3}{q_1}>0. 
$
Thus, by Sobolev embedding, we have for some $r < q < 3 - \frac{3}{\a}$ and $\sigma > 0$
\ba\nonumber
|I_1|  \leq  C\e^{\sigma}  \| \bfvarphi\|_{W^{1, q}(\O)}.
\ea

\medskip

For $I_{2}$, again by \eqref{sol est} and \eqref{g estimate}, we have 
\ba\nonumber
|I_2| \leq \|\tilde \uu_{\e}\|_{L^{r^{*}}(\Omega)}  \| \nabla \tilde \uu_{\e}\|_{L^{r}(\Omega)}  \|1-g_\e\|_{L^{p_2}(\O)}\| \bfvarphi\|_{L^{q_2}(\O)},
\ea
where $r' < p_{2}, q_{2} < \infty$ satisfy
$
\frac{1}{p_{2}} + \frac{1}{q_{2}}  = 1 - \frac{1}{r^{*}} - \frac{1}{r} = \frac{4}{3} - \frac{2}{r} >0. 
$
By Sobolev embedding, we have for some $\max \{r, r'\} < q <3$ and $\sigma > 0$ that 
\ba\nonumber
|I_2|  \leq  C\e^{\sigma}  \| \bfvarphi\|_{W^{1, q}(\O)}. 
\ea

\medskip

    If $\frac{9}{5}\le r<3-\frac{3}{\alpha}$, the estimate of $I_{3}$ follows exactly from the estimate of $I_{1}$, due to the fact $\tilde p_{\e}$ and $\mathbb{S}(D\tilde{\mathbf{u}}_\e)$ are both {uniformly bounded} in $L^{r'}(\Omega)$.   

    For $\frac{6\alpha-6}{4\alpha-5}<r<\frac{9}{5}$, we have 
    $$
    1-\frac{2}{r^{*}}= \frac{5}{3}-\frac{2}{r}>\frac{\alpha}{3\alpha-3}.
    $$
    Thus, there exist $r<p_3<\infty$ and $r<q_3<3-\frac{3}{\alpha}$ such that 
    \ba\nonumber
        |I_3| & \le \|\tilde{p}_\e\|_{L^{\frac{r^*}{2}}(\O)}\big(\|1-g_\e\|_{L^{p_3}(\O)}\|\nabla \bfvarphi\|_{L^{q_3}(\O)} +\|\bfvarphi\|_{L^{p_3}(\O) }\|\nabla g_\e\|_{L^{q_3}(\O)} \big)
        \\
        & \leq  C\e^{\frac{3(\alpha-1)}{p_{3}}}  \|\nabla \bfvarphi\|_{L^{q_3}(\O)} + C \e^{\frac{(3-q_3)\alpha-3}{q_3}}\| \bfvarphi\|_{L^{p_3}(\O)}.
    \ea
     Then by Sobolev embedding, we can obtain for some $\max\{r,r'\}<q<3$ and $\sigma>0$, 
    \ba\nonumber 
        |I_3|\le C\e^{\sigma}\|\bfvarphi\|_{W^{1, q}(\O)}.
    \ea
    
    \medskip
    
   The estimate of $I_{4}$ is rather straightforward:
   $$
   |I_{4}| \leq C \|{\bf f}\|_{L^{\infty}(\Omega)} \| \bfvarphi\|_{L^{2}(\Omega)} \|1 - g_{\e}\|_{L^{2}(\Omega)} \leq C \| \bfvarphi\|_{L^{2}(\Omega)} \e^{\frac{3(\a-1)}{2}}. 
   $$
   
  The proof is  completed. 
\end{proof}

Therefore,  by Proposition \ref{steady extend equation} and the convergences in \eqref{sol convergence}, passing $\e \to 0$ in \eqref {steady extend estimate} implies
\ba\label{limit equ}
-{\rm div}\, \overline{\mathbb{S}(D\mathbf{u})} + (\mathbf{u}\cdot\nabla)\mathbf{u}+\nabla p=\mathbf{f} \quad {\rm in} \ \mathcal{D'}(\Omega).
\ea
To complete the proof of Theorem \ref{thm-1}, it remains to show the limit of the nonlinear stress tensor
\ba\label{SD-SD}
\overline{\mathbb{S}(D\mathbf{u})}=\mathbb{S}(D\mathbf{u}).
\ea 
This will be done in the following section. 

\subsection{Strong convergence of the stress tensor}\label{Strong convergence of the velocity gradient}

    In this section, we establish the strong convergence of the nonlinear stress tensor, namely $\mathbb{S}(D\tilde{\bf u}_\e) \to \mathbb{S}(D{\bf u})$ a.e. in $\Omega$, and consequently $\overline{\mathbb{S}(D\mathbf{u})}=\mathbb{S}(D\mathbf{u})$. The idea is to subtract the limit equation \eqref{limit equ} from \eqref{steady extend estimate} and then test the resulting difference with the difference of the solutions, $\tilde{\bf u}_\e-{\bf u}$. 
    Formally, this yields
    \ba\nonumber
        \int_\O\big(\mathbb{S}(D\tilde{\mathbf{u}}_\e)-\overline{\mathbb{S}(D{\mathbf{u}})}\big): (D\tilde \vu_{\e} -D \vu) - (\tilde{ \mathbf{u}}_\e\otimes\tilde{ \mathbf{u}}_\e-{\bf u}\otimes{\bf u}\big):(\nabla \tilde \vu_{\e} -\nabla  \vu)  \,{\rm d}x =\langle{\bf G}_\e,\tilde \vu_{\e} - \vu \rangle.
     \ea
   A priori, $\tilde{\bf u}_\e-{\bf u}$ only belongs to $ W^{1,r}_{0}(\Omega;\R^{3})$. However, according to Proposition \ref{steady extend equation}, the test functions should be chosen as $\bfvarphi\in W^{1,q}_0(\Omega;\R^{3})$ with $q>\max \{r, r'\}$, for which \eqref{steady  Ge est} holds. We therefore need to construct smoother test functions that differ only slightly from $\tilde{\bf u}_\e-{\bf u}$. For this purpose, we adopt the idea that a solenoidal function in $W_0^{1,q}$ can be approximated by a solenoidal Lipschitz function that coincides with the original one except on sets of small Lebesgue measure. This is the idea of {\em solenoidal Lipschitz truncations}, which has been employed in various areas of analysis in different contexts, such as the calculus of variations and the existence theory of partial differential equations.
    Next, we state the following proposition, which provides the properties of such solenoidal Lipschitz truncations. See, for example, \cite{BDS013}, \cite{DMS08}, or \cite{FMS03} for details.
    \begin{proposition}\label{lip tru}
        Let $1<q<\infty$, and let ${\bf w}_m\to0$ weakly in $W^{1,q}_0(\O)$ be a solenoidal sequence. Then there exist $k_0\in\mathbb{N}$ and a sequence $\{\lambda_{m, k}\}_{m\in \N, k\geq k_{0}} $ with $2^{2^k}\leq \lambda_{m,k}<2^{2^{k+1}-1}$ such that the Lipschitz truncations $\{{\bf w}^k_m\}$ satisfy the following properties for any $m\in \N$ and any $k\geq k_{0}$ :
        \begin{enumerate}[(i)]
            \item ${\bf w}^k_m\in W^{1,\infty}_0(\mathbb{R}^3)$ with $\operatorname{div}{\bf w}^k_m=0$, and ${\bf w}^k_m={\bf w}_m$ on $\O\setminus E_{m,k}$, where $E_{m,k}$ are measurable  sets satisfying $\|\lambda_{m,k}\chi_{E_{m,k}}\|_{L^q(\mathbb{R}^3)}\le C2^{-\frac{k}{q}}\|\nabla {\bf w}_m\|_{L^q(\O)}$. Here $\chi_{E_{m,k}}$ denotes the characteristic function of the set $E_{m,k}$;
            \item $\|{\bf w}^k_m\|_{W^{1,\infty}(\Omega)}\le C\lambda_{m,k}$;
            \item ${\bf w}^k_m\to0$ as $m\to\infty$ in $L^\infty(\Omega)$.
        \end{enumerate}
    \end{proposition}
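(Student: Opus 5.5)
The statement is the solenoidal Lipschitz truncation result of \cite{BDS013,DMS08,FMS03}, which we use as a black box. If one wanted to reprove it, I would follow the local-correction approach of \cite{BDS013}, adapted to zero boundary values.

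\emph{Setup and bad set.} Extend ${\bf w}_m$ by zero to $\mathbb{R}^3$. The extension lies in $W^{1,q}(\mathbb{R}^3)$, is solenoidal in $\mathbb{R}^3$ (because ${\bf w}_m$ has zero trace) and is supported in $\overline{\O}$. For $\lambda>0$ define the open bad set
\[
\mathcal{O}_\lambda=\{M(\nabla {\bf w}_m)>\lambda\}\cup\{M({\bf w}_m)>\lambda\},
\]
where $M$ is the Hardy--Littlewood maximal operator. Take a Whitney covering $\{Q_j\}$ of $\mathcal{O}_\lambda$ with a subordinate partition of unity $\{\psi_j\}$.

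\emph{Unconstrained truncation.} Set
\[
{\bf w}^\lambda={\bf w}_m-\sum_j\psi_j({\bf w}_m-\langle{\bf w}_m\rangle_{Q_j^*}),
\]
where $\langle\cdot\rangle_{Q_j^*}$ is the average over a dilated cube $Q_j^*$. The usual Poincar\'e-on-Whitney-cubes argument shows that ${\bf w}^\lambda$ is Lipschitz with $\|\nabla{\bf w}^\lambda\|_\infty\le C\lambda$. Near $\partial\O$, the zero extension and the Lipschitz regularity of $\partial\O$ let us replace averages by $0$ on cubes meeting $\mathbb{R}^3\setminus\O$. This keeps the support in $\overline\O$ and gives property (ii).

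\emph{Solenoidal correction.} ${\bf w}^\lambda$ is not divergence free. On each Whitney cube we have $\operatorname{div}\big(\psi_j({\bf w}_m-\langle{\bf w}_m\rangle)\big)=\nabla\psi_j\cdot({\bf w}_m-\langle{\bf w}_m\rangle)$. Since $\operatorname{div}{\bf w}_m=0$ and $\psi_j$ vanishes on $\partial Q_j^*$, this has zero mean over $Q_j^*$. We therefore subtract $\mathcal{B}_{Q_j^*}$ of it, using the Bogovskii operator of Proposition \ref{bogov} on the dilated cube. By the Poincar\'e inequality and $|\nabla\psi_j|\lesssim \ell(Q_j)^{-1}$, each local correction is bounded by $C\lambda$ in $W^{1,\infty}$. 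Strictly, Bogovskii is only bounded in $W^{1,p}$ with $p<\infty$; one passes to $L^\infty$ using the BMO/maximal-function structure as in \cite{BDS013}. I expect this step to be the main obstacle. The point to check is that, with finite overlap of the cubes, the summed corrections stay Lipschitz with constant $C\lambda$, stay supported inside $\O$, and yield ${\bf w}^k_m\in W^{1,\infty}_0$ with $\operatorname{div}=0$. Everything else is routine. Outside $E_{m,k}:=\mathcal{O}_\lambda$ nothing is changed, which gives (i).

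\emph{Choice of levels and convergence.} Let $\lambda$ range over the levels $2^{2^j}$ with $k\le j<2^{k}$ and $2^{2^k}\le\lambda<2^{2^{k+1}-1}$. The layer-cake estimate $\sum_j\lambda_j^q|\{M(\nabla{\bf w}_m)>\lambda_j\}|\le C\|\nabla{\bf w}_m\|_q^q$ holds because these levels grow super-geometrically. The $2^k$ summands are then disjoint-scale pieces, so by pigeonhole at least one level $\lambda_{m,k}$ satisfies $\lambda_{m,k}^q|E_{m,k}|\le C2^{-k}\|\nabla{\bf w}_m\|_q^q$; the $M({\bf w}_m)$ part is absorbed by Poincar\'e. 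For (iii), Rellich gives ${\bf w}_m\to0$ strongly in $L^q$. The truncations are uniformly Lipschitz (since $\lambda_{m,k}$ is bounded for fixed $k$), and the corrections are controlled by local $L^1$-averages of ${\bf w}_m$. Combining these, the truncations tend to zero in $L^q$ with uniformly bounded Lipschitz norm, and interpolation (Gagliardo--Nirenberg) upgrades this to convergence in $L^\infty(\O)$.
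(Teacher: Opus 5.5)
Quoting this result from \cite{BDS013,DMS08,FMS03} is exactly what the paper does, since it gives no proof of its own, so as a citation your proposal is fine. The reproof you sketch, however, has a genuine gap at the step you flag, and the reason is not the endpoint issue you name. Your data $f_j=\nabla\psi_j\cdot({\bf w}_m-\langle{\bf w}_m\rangle_{Q_j^*})$ contain ${\bf w}_m$ itself on $Q_j^*\subset\mathcal{O}_\lambda$, which is exactly where $\nabla{\bf w}_m$ is uncontrolled. The Whitney property plus Poincar\'e give only the averaged bound $\frac{1}{|Q_j^*|}\int_{Q_j^*}|f_j|\,\dx\le C\lambda$. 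There is no pointwise bound on $f_j$, hence none on $\nabla\mathcal{B}_{Q_j^*}(f_j)$, so the claim that \emph{each local correction is bounded by $C\lambda$ in $W^{1,\infty}$} is false. BMO or maximal-function refinements cannot repair it. On $\mathcal{O}_\lambda$ your unconstrained truncation equals the smooth field $\sum_j\psi_j\langle{\bf w}_m\rangle_{Q_j^*}$, and subtracting $\sum_j\mathcal{B}_{Q_j^*}(f_j)$ puts the roughness of ${\bf w}_m$ back into precisely that set. The rough parts cancel only in the sum: $\sum_j f_j=-\sum_j\nabla\psi_j\cdot\langle{\bf w}_m\rangle_{Q_j^*}$ on $\mathcal{O}_\lambda$, because $\sum_j\nabla\psi_j=0$ there. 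To correct this smooth $O(\lambda)$ divergence you would need to split it into mean-zero pieces attached to single Whitney cubes with uniform bounds. No such decomposition is available off the shelf on an arbitrary open set $\mathcal{O}_\lambda$, and this step is where the real difficulty lies.

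The solenoidal truncations behind the cited statement (Breit--Diening--Fuchs, and \cite{BDS013}) avoid a Bogovskii correction altogether. One writes ${\bf w}_m=\operatorname{curl}\boldsymbol{\Psi}_m$ with $\|\nabla^2\boldsymbol{\Psi}_m\|_{L^q}\le C\|\nabla{\bf w}_m\|_{L^q}$ (Calder\'on--Zygmund, $1<q<\infty$). One then truncates $\boldsymbol{\Psi}_m$ in $W^{2,\infty}$ using local first-order polynomials on the Whitney cubes of its bad set, and takes the curl. The result is automatically divergence free, Lipschitz with constant $C\lambda$, and equal to ${\bf w}_m$ off the bad set.

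The rest of your sketch is standard, up to two slips. First, the levels should be the dyadic values $\lambda=2^i$ with $2^k\le i<2^{k+1}-1$. Geometric growth already gives $\sum_i 2^{iq}|\{M(\nabla{\bf w}_m)>2^i\}|\le C\|\nabla{\bf w}_m\|_{L^q}^q$, and pigeonhole finishes. Your levels $2^{2^j}$, $k\le j<2^k$, leave the window $[2^{2^k},2^{2^{k+1}-1})$. Second, support in $\overline{\O}$ is not delivered by corrections on cubes $Q_j^*$ that protrude from $\O$. It is also not needed: the statement only asserts ${\bf w}^k_m\in W^{1,\infty}_0(\mathbb{R}^3)$, and the paper only uses $\zeta{\bf v}^k_m$ with $\zeta\in C_c^\infty(\O)$.
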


    \medskip

    We then apply Proposition \ref{steady extend equation} and Proposition \ref{lip tru} to prove \eqref{SD-SD}. Let $\{\tilde{{\bf u}}_\e-{\bf u}\}_{\e>0}$ be a solenoidal family.  From \eqref{sol convergence}, we know that there exists a subsequence $\{\e_{m}\}_{m\in \N}$ such that $\e_{m} \to 0$,  as $m\to \infty$ and the subsequence $\{\vv_{m}\}_{m\in \N}$ with $\vv_{m} := \tilde{{\bf u}}_{\e_{m}} - {\bf u}$ satisfies 
    \ba\nonumber
    \dive \vv_{m} = 0, \quad     {\bf v}_{m} \to0 \ {\rm weakly \ in }\ W_0^{1,r}(\O) \ \mbox{and strongly in} \ L^{r}(\Omega), \quad  \mbox{as $m\to \infty$}.
    \ea
    By Proposition \ref{lip tru}, there exists  $\{{\bf v}_m^k\}_{m\in \N, k\geq k_{0}} \subset W^{1,\infty}(\O)$  with $\operatorname{div}{\bf v}_m^k=0$ and  $\{\lambda_{m, k}\}_{m\in \N, k\geq k_{0}} $ with $2^{2^k}\leq \lambda_{m,k}<2^{2^{k+1}-1}$  such that for all $m\in \N$ and all $k\geq k_{0}$:
    \ba\label{vk estimate}
        \|{\bf v}_m^k\|_{W^{1,\infty}(\O)}\leq C\lambda_{m,k},\quad {\bf v}_m^k={\bf v}_m \ {\rm in} \ \O\setminus E_{m, k},\quad {\bf v}^k_m \to 0 \ {\rm in} \ L^\infty(\Omega) \ {\rm as} \ m\to \infty .
    \ea
    Moreover,  $\|\lambda_{m,k}\chi_{E_{m,k}}\|_{L^r(\R^{3})}\le C 2^{-\frac{k}{r}}\|\nabla {\bf v}_m\|_{L^r(\O)} \leq C 2^{-\frac{k}{r}}$ implies
    \ba\label{ek measure}
        |E_{m,k} |\leq C\lambda_{m,k}^{-r}2^{-k}.
    \ea

     Subtracting \eqref{limit equ} from \eqref{steady extend estimate}, we obtain
     \ba\label{basic equation}
        \int_\O\big(\mathbb{S}(D\tilde{\mathbf{u}}_{\e_{m}})-\overline{\mathbb{S}(D{\mathbf{u}})}\big):D\bfvarphi-(\tilde{ \mathbf{u}}_{\e_{m}}\otimes\tilde{ \mathbf{u}}_{\e_{m} }- {\bf u}\otimes{\bf u}\big):\nabla\bfvarphi-(\tilde p_{\e_{m}} -p)\,{\rm div}\,\bfvarphi\,{\rm d}x=\langle{\bf G}_{\e_{m}},\bfvarphi\rangle
     \ea
     for any $\bfvarphi\in W^{1,q}_0(\O;\R^{3})$ with $\max \{r, r'\}< q<3$. We then consider a cut-off function $\zeta\in C_c^\infty(\Omega)$ with $0\le\zeta\le1$, choose the test function $\bfvarphi=\zeta{\bf v}_m^k$ in \eqref{basic equation}, and pass to the limit $m\to \infty$ term by term. We shall repeatedly use the uniform bounds and convergences in \eqref{sol convergence}. 
     
     \medskip
     
     For the nonlinear convective term, we use \eqref{vk estimate}  to deduce
     \ba\label{nonlinear term}
        &\big | \int_{\O}(\tilde{ \mathbf{u}}_{\e_{m}} \otimes\tilde{ \mathbf{u}}_{\e_{m}}  -{\bf u}\otimes{\bf u}):\nabla(\zeta{\bf v}_m^k)\,{\rm d}x \big|
        \\
        & \le \big | \int_{\O}(\tilde{ \mathbf{u}}_{\e_{m}} \otimes\tilde{ \mathbf{u}}_{\e_{m}} -{\bf u}\otimes{\bf u}):\zeta\nabla{\bf v}_m^k \,{\rm d}x \big | +\big |  \int_{\O}(\tilde{ \mathbf{u}}_{\e_{m}} \otimes\tilde{ \mathbf{u}}_{\e_{m}} -{\bf u}\otimes{\bf u}):{\bf v}_m^k\otimes\nabla\zeta\,{\rm d}x \big | 
        \\
        & \le  C \lambda_{m,k} \|\tilde{ \mathbf{u}}_{\e_{m}} \otimes\tilde{ \mathbf{u}}_{\e_{m}} - {\bf u}\otimes{\bf u}\|_{L^{1}(\Omega)} \\
        & \leq C  \lambda_{m,k}   \|\tilde{ \mathbf{u}}_{\e_{m}} -  {\bf u}\|_{L^{2}(\Omega)} .
     \ea
     
     As ${\rm div}\,{\bf v}_m^k=0$,  we have
    \ba\nonumber
       \big | \int_\Omega(\tilde p_{\e_{m}} - p)\,{\rm div}\,(\zeta{\bf v}_m^k)\,{\rm d}x\big |  =
        \big | \int_\Omega(\tilde p_{\e_{m}} - p)\,\nabla\zeta\cdot{\bf v}_m^k\,{\rm d}x\big | \le C \|\tilde p_{\e_{m}}-p\|_{L^{\min\{r',\frac{r^*}{2}\}}(\Omega)}\|{\bf v}^k_m\|_{L^\infty(\O)}.
    \ea
    
    By \eqref{steady  Ge est} we have
    \ba\label{ref estimates}
        |\langle {\bf G}_{\e_{m}},\zeta{\bf v}_m^k\rangle| \leq C\e_{m}^\sigma\|\zeta\mathbf{v}_m^k\|_{W^{1,q}(\Omega)}\leq C\lambda_{m,k}\e_{m}^\sigma.
    \ea
    Therefore, from \eqref{basic equation}--\eqref{ref estimates} we can deduce that
    \ba\label{vek-0}
         \big | \int_\O \big(\mathbb{S}(D\tilde{\mathbf{u}}_{\e_{m}})-\overline{\mathbb{S}(D{\mathbf{u}})}\big):D(\zeta{\bf v}_m^k)\,{\rm d}x \big |    \le  C\big(\lambda_{m,k} \e_{m}^\sigma+\|{\bf v}^k_m\|_{L^\infty(\O)}
        + \lambda_{m,k}  \|\tilde{ \mathbf{u}}_{\e_{m}} -  {\bf u}\|_{L^{2}(\Omega)}    \big).
    \ea

    \medskip
    
    We then compute 
     \ba\nonumber
        &   \int_{\O\setminus E_{m,k}}\big(\mathbb{S}(D\tilde{\mathbf{u}}_{\e_{m}}) -\overline{\mathbb{S}(D{\mathbf{u}})}\big):\zeta D  \vv_{m} \,{\rm d}x   \\
        & \quad = \int_{\O\setminus E_{m,k}}\big(\mathbb{S}(D\tilde{\mathbf{u}}_{\e_{m}}) -\overline{\mathbb{S}(D{\mathbf{u}})}\big):\zeta D  \vv_{m}^{k} \,{\rm d}x  \\
        & \quad =  \int_{\O\setminus E_{m,k}}\big(\mathbb{S}(D\tilde{\mathbf{u}}_{\e_{m}}) -\overline{\mathbb{S}(D{\mathbf{u}})}\big): D(\zeta  \vv_{m}^{k} ) \,{\rm d}x  -   \int_{\O\setminus E_{m,k}}\big(\mathbb{S}(D\tilde{\mathbf{u}}_{\e_{m}}) -\overline{\mathbb{S}(D{\mathbf{u}})}\big):\big(\nabla\zeta\otimes \vv_{m}^{k} \big)\,{\rm d}x \\
        & \quad = \int_{\O }\big(\mathbb{S}(D\tilde{\mathbf{u}}_{\e_{m}}) -\overline{\mathbb{S}(D{\mathbf{u}})}\big): D(\zeta  \vv_{m}^{k} ) \,{\rm d}x  -   J_{1} - J_{2}  , 
            \ea
where
    \ba\label{nable zeta}
    J_{1}  &: = \int_{\O\setminus E_{m,k}}\big(\mathbb{S}(D\tilde{\mathbf{u}}_{\e_{m}}) -\overline{\mathbb{S}(D{\mathbf{u}})}\big):\big(\nabla\zeta\otimes \vv_{m}^{k} \big)\,{\rm d}x  \leq C  \| \vv_{m}^{k} \|_{L^{r}(\O)},\\
     J_{2}   &: =   \int_{\Omega \cap E_{m,k}}\big(\mathbb{S}(D\tilde{\mathbf{u}}_{\e_{m}})-\overline{\mathbb{S}(D{\mathbf{u}})}\big):D(\zeta{\bf v}_m^k)\,{\rm d}x \\
    & \quad  \leq  C \| \mathbb{S}(D\tilde{\mathbf{u}}_{\e_{m}})-\overline{\mathbb{S}(D{\mathbf{u}})}\|_{L^{r'}(\Omega)} \| \vv_{m}^{k}\|_{W^{1,\infty}(\R^{3})}   \|1\|_{L^{r}(E_{m,k})} \\
   &  \quad \leq C\lambda_{m,k} |E_{m,k}|^{\frac{1}{r}}  \\
   & \quad  \leq C2^{-\frac{k}{r}}.
    \ea
    Therefore, by \eqref{vek-0}--\eqref{nable zeta}, we deduce that for each $k\in\mathbb{N}$
    \ba\nonumber
        & \big | \int_{\O\setminus E_{m,k}}\big(\mathbb{S}(D\tilde{\mathbf{u}}_{\e_{m}}) -\overline{\mathbb{S}(D{\mathbf{u}})}\big):\zeta D (\tilde{\bf u}_{\e_{m}} - {\bf u} )\,{\rm d}x\big | 
        \\
        & \quad \le C\big(\lambda_{m,k}\e_{m}^\sigma+\|{\bf v}^k_m\|_{L^\infty(\O)}+2^{-\frac{k}{r}} + \lambda_{m,k} \|\tilde{ \mathbf{u}}_{\e_{m}} - {\bf u} \|_{(L^{2}\cap L^{r})(\Omega)}
        \big).
    \ea
    Thus, as $m\to \infty$ ($\e_{m}\to0$), by virtue of the strong convergence $\tilde \vu_{\e_{m}} \to \vu$ in $L^{q_{0}}(\Omega)$ for any $q_{0} < r^{*}$ and ${\bf v}^k_m\to0$ in $L^\infty(\O)$, we have
    \ba\nonumber
        \limsup_{m \to 0}\big | \int_{\O\setminus E_{m,k}}\big(\mathbb{S}(D\tilde{\mathbf{u}}_{\e_{m}}) -\overline{\mathbb{S}(D{\mathbf{u}})}\big):\zeta D(\tilde{\bf u}_{\e_{m}}-{\bf u})\,{\rm d}x\big | \le C 2^{-\frac{k}{3}}.
    \ea
    Then, there exists a subsequence $ \{m_{k}\}_{k\geq k_{0}}$  such that
    \ba\label{ek-1}
       \big | \int_{\O\setminus E_{m_{k},k}}\big(\mathbb{S}(D\tilde{\mathbf{u}}_{\e_{m_{k}}})-\overline{\mathbb{S}(D{\mathbf{u}})}\big):\zeta D(\tilde{\bf u}_{\e_{m_{k}}}-{\bf u})\,{\rm d}x\big | & \leq C2^{-\frac{k}{4}}, \ \forall \, k\geq k_{0}.
    \ea
    
  Let $\displaystyle \zeta_k:=\zeta \chi_{\O\setminus E_{m_{k},k}}  $. We have 
    \ba\nonumber
        \zeta_k(x) \to \zeta(x) \ {as}\ k\to\infty , \quad \forall x\in  \liminf_{k\to \infty} (\O\setminus E_{m_{k},k}) = \Omega \setminus \limsup_{k \to \infty}   E_{m_{k},k} .
    \ea
    As $\displaystyle |E_{m_{k},k}|\leq C\lambda_{m,k}^{-r} 2^{-k}\le C 2^{-r 2^{k}} 2^{-k}$, we have  $|\limsup_{k\to\infty}E_{m_{k},k} |=0$ and thus
    \ba\label{a.e.convergence}
        \zeta_k(x)\to\zeta(x) \ {as}\ k\to\infty,  \ a.e.  \ x \in \Omega.
    \ea
    
    By \eqref{ek-1}, we have
    \ba\label{ek-2}
        \lim_{k\to\infty} \big | \int_{\O}\big(\mathbb{S}(D\tilde{\mathbf{u}}_{\e_{m_{k}}})-\overline{\mathbb{S}(D{\mathbf{u}})}\big):\zeta_k D(\tilde{\bf u}_{\e_{m_{k}}}-{\bf u})\,{\rm d}x  \big| =0.
    \ea
    Combining \eqref{sol convergence} and \eqref{a.e.convergence}, we can deduce from \eqref{ek-2} that
    \ba\nonumber
        \lim_{k\to\infty}\int_{\O}\mathbb{S}(D\tilde{\mathbf{u}}_{\e_{m_{k}}}):\zeta_k D\tilde{\bf u}_{\e_{m_{k}}}\,{\rm d}x=\int_{\O}\overline{\mathbb{S}(D{\mathbf{u}})}:\zeta D{\bf u}\,{\rm d}x.
    \ea
Applying the local Minty's trick which we recall in {Appendix \ref{Minty Trick}} (see Proposition \ref{Minty Trick}) implies
    \ba\nonumber
        \zeta\overline{\mathbb{S}(D{\mathbf{u}})}=\zeta\mathbb{S}(D{\mathbf{u}}) \ {a.e.}\ {\rm in} \ \O.
    \ea
    Taking the limit $\zeta\to\chi_{\O}$  we finally derive that $\displaystyle\overline{\mathbb{S}(D{\mathbf{u}})}=\mathbb{S}(D{\mathbf{u}}) \ {a.e.}\ {\rm in} \ \O$. We thus complete the proof of Theorem \ref{thm-1}. 

    \section{Proof of Theorem \ref{convergence thm} }\label{Convergence Rate}
    
    This section is devoted to the proof of the quantitative homogenization results in Theorem \ref{convergence thm} with  $\mathbb{S}(\boldsymbol{\xi})$ satisfying \eqref{power-CY} with $\mu_{0}>0, \mu_{1}=0, \de>0$ and with the smallness condition on external force $\|{\bf f}\|_{L^{\infty}(\Omega)} \leq \kappa\ll 1$.
    
    \subsection{Convergence rates for velocity}
    We now consider the convergence rate for the velocity field $\tilde{\mathbf{u}}_\epsilon - \mathbf{u}$. We compute
    \ba\nonumber
        \int_{\O} & \big(\mathbb{S}(D\tilde{\mathbf{u}}_{\e})-{\mathbb{S}(D{\mathbf{u}})}\big):D(\tilde{\bf u}_{\e}-{\bf u})\,{\rm d}x 
        \\
        & = \int_{\O}\big(\mathbb{S}(D\tilde{\mathbf{u}}_{\e}):D\tilde{\bf u}_{\e}+{\mathbb{S}(D{\mathbf{u}})}:D{\bf u}-\mathbb{S}(D\tilde{\mathbf{u}}_{\e}):D{\bf u}-{\mathbb{S}(D{\mathbf{u}})}:D\tilde{\bf u}_{\e}\big)\,{\rm d}x.
    \ea
    Choosing ${\mathbf{u}}$ and $\tilde{\bf u}_{\e}$ as test functions in the limit equation \eqref{limit equ} yields
    \ba\nonumber
        & \int_{\O}{\mathbb{S}(D{\mathbf{u}})}:D{\bf u}\,{\rm d}x=\int_{\O}{\bf f}\cdot{\bf u} \,{\rm d}x,\\
       &  \int_{\O}{\mathbb{S}(D{\mathbf{u}})}:D\tilde{\bf u}_{\e}\,{\rm d}x=
        \int_{\O}\left(-{\bf u}\cdot\nabla{\bf u}\cdot \tilde{\bf u}_{\e}+{\bf f}\cdot\tilde{\bf u}_{\e}\right)\,{\rm d}x.
    \ea
    Similarly, choosing $g_\epsilon \mathbf{u}$ as a test function in \eqref{1.1} yields
    \ba\nonumber
        \int_{\O}\mathbb{S}(D\tilde{\mathbf{u}}_{\e}):D(g_\e{\bf u})\,{\rm d}x =
        \int_{\O_\e}\left(-{\bf u}_{\e}\cdot\nabla{\bf u}_{\e}\cdot (g_\e{\bf u})-{p}_{\e}\operatorname{div}(g_\e{\bf u})+{\bf f}\cdot (g_\e{\bf u})\right)\,{\rm d}x.
    \ea
  For the nonlinear convective terms,  a direct calculation yields
    \ba\nonumber
        &\int_{\O} ({\bf u}\cdot\nabla{\bf u} )\cdot \tilde{\bf u}_{\e} + (\tilde{\bf u}_{\e}\cdot\nabla\tilde{\bf u}_{\e})\cdot{\bf u}\,{\rm d}x
        \\
        & \quad = \int_{\O} ({\bf u}\cdot\nabla{\bf u} )\cdot (\tilde{\bf u}_{\e}-{\bf u})   + \big({\bf u}_{\e}\cdot\nabla(\tilde{\bf u}_{\e}-{\bf u}) \big)\cdot{\bf u}\,{\rm d}x
          \\
         & \quad  =  -\int_{\O} (\tilde{\bf u}_{\e}-{\bf u})\otimes(\tilde{\bf u}_{\e}-{\bf u}):\nabla {\bf u}\,{\rm d}x.
    \ea
    Together with the energy inequality \eqref{energy-eq} we have 
    \ba\label{Sue-Su-1}
        \int_{\O} & \big(\mathbb{S}(D\tilde{\mathbf{u}}_{\e})-{\mathbb{S}(D{\mathbf{u}})}\big):D(\tilde{\bf u}_{\e}-{\bf u})\,{\rm d}x 
        \\
        & \le \int_{\O} \mathbb{S}(D\tilde{\bf u}_\e):D{\bf u}(g_\e-1) + {\bf f}(1-g_\e){\bf u}+\tilde{\bf u}_{\e}\cdot\nabla\tilde{\bf u}_{\e}\cdot(g_\e-1){\bf u}\,{\rm d}x 
        \\
        & \quad + \int_{\O} \tilde{p}_\e \nabla g_\e\cdot{\bf u} - \mathbb{S}(D\tilde{\bf u}_\e): ({\bf u}\otimes\nabla g_\e)\,{\rm d}x-\int_{\O} (\tilde{\bf u}_{\e}-{\bf u})\otimes(\tilde{\bf u}_{\e}-{\bf u}):\nabla {\bf u}\,{\rm d}x
        \\
        & = I_1+I_2+I_3.
    \ea

    \begin{itemize}
        \item {\bf Case  $2\leq r\leq 3 - \frac{3}{\a}$}. Applying Proposition \ref{Shear Thickening Flows} gives $\mathbf{u} \in W^{1,2r+2}(\Omega)$. Hence by Sobolev embedding we have $\mathbf{u}\in L^\infty(\O)$. Furthermore, by weak lower semi-continuity applied to \eqref{uenorm}, it follows that
        \ba\nonumber
            \|\mathbf{u}\|_{W_0^{1,r}(\O)}\le  C\|\mathbf{f}\|_{L^\infty(\O)}^{\frac{1}{r-1}}.
        \ea 
     Applying inequality \eqref{r>2coverity} gives  that for some positive constant  $\sigma_{2}$,
        \ba\label{r>2ineq}
            \sigma_2 ( \|D\tilde{\bf u}_{\e}-D{\bf u}\|^2_{L^2({\O})} +   \|D\tilde{\bf u}_{\e}-D{\bf u}\|^r_{L^r({\O})} ) \le I_1+I_2+I_3.
        \ea
      

        Now we compute $I_1,I_2$ and $I_3$ term by term. For $I_1$, using the estimates in \eqref{g estimate}  gives 
        \ba\nonumber
            |I_1| & \le \|\mathbb{S}(D\tilde{\bf u}_\e)\|_{L^{r'}(\O)}\|D{\bf u}\|_{L^{2r+2}(\O)}\|g_\e-1\|_{L^{\frac{2r(r+1)}{r+2}}(\O)}
            \\
            & \quad +\|{\bf f}\|_{L^{\infty}(\O)}\|{\bf u}\|_{L^{\infty}(\O)}\|g_\e-1\|_{L^1(\O)}+\|\tilde{\bf u}_{\e}\|_{L^{r^*}(\O)}\|\nabla\tilde{\bf u}_{\e}\|_{L^r(\O)}\|{\bf u}\|_{L^{\infty}(\O)}\|g_\e-1\|_{L^{\frac{3r}{4r-6}}(\O)}
            \\
            & \le C\|{\bf u}\|_{W^{1,2r+2}(\O)} \|g_\e-1\|_{L^{\frac{2r(r+1)}{r+2}}(\O)} 
            \\
            & \le C \|{\bf u}\|_{W^{1,2r+2}(\O)}\e^{\frac{3(\a-1)(r+2)}{2r(r+1)}}.
        \ea
        For $I_2$, by  similar arguments, we have
        \ba\nonumber
            |I_2| & \le \|\tilde{p}_\e\|_{L^{r'}(\O)}\|{\bf u}\|_{L^{\infty}(\O)}\|\nabla g_\e\|_{L^{r}(\O)}+ \|\mathbb{S}(D\tilde{\bf u}_\e)\|_{L^{r'}(\O)}\|{\bf u}\|_{L^{\infty}(\O)}\|\nabla g_\e\|_{L^{r}(\O)}
            \\
            & \le C \|{\bf u}\|_{W^{1,2r+2}(\O)}\e^{\frac{(3-r)\a-3}{r}}.
        \ea
        For $I_3$, applying the Poincar\'e and Korn inequalities yields
        \ba\nonumber
            |I_3|\le \|\tilde{\bf u}_{\e}-{\bf u}\|^2_{L^6(\O)}\|\nabla {\bf u}\|_{L^{\frac{3}{2}}(\O)}\le C\|D\tilde{\bf u}_{\e}-D{\bf u}\|^2_{L^2({\O})}\|\nabla {\bf u}\|_{L^{\frac{3}{2}}(\O)}.
        \ea
        In conclusion, we can deduce from \eqref{r>2ineq} that
        \ba\nonumber
             \sigma_2 & (\|D\tilde{\bf u}_{\e}-D{\bf u}\|^2_{L^2({\O})} + \|D\tilde{\bf u}_{\e}-D{\bf u}\|^r_{L^r({\O})} )
             \\
             & \le C\|D\tilde{\bf u}_{\e}-D{\bf u}\|^2_{L^2({\O})}\|\nabla {\bf u}\|_{L^{\frac{3}{2}}(\O)} + C\|{\bf u}\|_{W^{1.2r+2}(\O)}[\e^{\frac{3(\a-1)(r+2)}{2r(r+1)}}+\e^{\frac{(3-r)\a-3}{r}}].
        \ea
    Choosing $\kappa$ sufficiently small so that $C\|\nabla{\bf u}\|_{L^\frac{3}{2}(\Omega)}\le C\|\nabla{\bf u}\|_{L^r(\Omega)} \leq C\|\mathbf{f}\|_{L^\infty(\O)}^{\frac{1}{r-1}} <\sigma_2$ gives
        \ba\label{rate-v-f}
            \|\tilde{\bf u}_{\e}-{\bf u}\|^2_{W^{1,2}({\O})}+\|\tilde{\bf u}_{\e}-{\bf u}\|^r_{W^{1,r}({\O})} \le C\|{\bf u}\|_{W^{1,2r+2}(\O)}\e^{\frac{(3-r)\alpha-3}{r}}.
        \ea
    \item {\bf Case $\frac{9}{5}\leq r< 2$}. By Proposition \ref{shear thinning fluids}, there holds $\mathbf{u}\in W^{1,\infty}(\O)$ and $\|\mathbf{u}\|_{W^{1,\infty}(\O)}\le C\|\mathbf{f}\|_{L^\infty(\O)}$. Then by \eqref{Sue-Su-1}, together with \eqref{est of ue}, which means $\|D\tilde{\mathbf{u}}_\e\|_{L^r(\O)} + \|D\mathbf{u}\|_{L^r(\O)}\le C$, applying \eqref{r<2coverity} and \eqref{r<2mono} gives
    \ba\nonumber
        \sigma_5 \|D\tilde{\bf u}_{\e}-D{\bf u}\|^2_{L^r({\O})}\le I_1+I_2+I_3,
    \ea
    where the constant $\sigma_5: = \mu_0(r-1)2^{\frac{r-2}{2}}\big(\delta^{2-r} |\Omega|^{\frac{2-r}{r}}+\|D\tilde{\mathbf{u}}_\e\|^{2-r}_{L^r(\O)}+\|D\mathbf{u}\|^{2-r}_{L^r(\O)}\big)^{-1} > 0.$

    Now we compute $I_1,I_2$ and $I_3$ term by term. For $I_1$, direct computation gives
    \ba\nonumber
        |I_1| & \le \|\mathbb{S}(D\tilde{\bf u}_\e)\|_{L^{r'}(\O)}\|D{\bf u}\|_{L^{\infty}(\O)}\|g_\e-1\|_{L^{r}(\O)}
        \\
        & \quad +\|{\bf f}\|_{L^{\infty}(\O)}\|{\bf u}\|_{L^{\infty}(\O)}\|g_\e-1\|_{L^1(\O)}+\|\tilde{\bf u}_{\e}\|_{L^{r^*}(\O)}\|\nabla\tilde{\bf u}_{\e}\|_{L^r(\O)}\|{\bf u}\|_{L^{\infty}(\O)}\|g_\e-1\|_{L^{\frac{3r}{4r-6}}(\O)}
        \\
        & \le C\|{\bf u}\|_{W^{1,\infty}(\O)} \|g_\e-1\|_{L^{\frac{3r}{4r-6}}(\O)} 
        \\
        & \le C \|{\bf u}\|_{W^{1,\infty}(\O)}\e^{\frac{(\a-1)(4r-6)}{r}}.
    \ea
    For $I_2$, there holds
    \ba\nonumber
        |I_2| & \le \|\tilde{p}_\e\|_{L^{r'}(\O)}\|{\bf u}\|_{L^{\infty}(\O)}\|\nabla g_\e\|_{L^{r}(\O)}+ \|\mathbb{S}(D\tilde{\bf u}_\e)\|_{L^{r'}(\O)}\|{\bf u}\|_{L^{\infty}(\O)}\|\nabla g_\e\|_{L^{r}(\O)}
        \\
        & \le C \|{\bf u}\|_{W^{1,\infty}(\O)}\e^{\frac{(3-r)\a-3}{r}}.
    \ea
    For $I_3$, applying the Poincar\'e and Korn inequalities yields
    \ba\nonumber
        |I_3|\le \|\tilde{\bf u}_{\e}-{\bf u}\|^2_{L^{r^*}(\O)}\|\nabla {\bf u}\|_{L^{\frac{3r}{5r-6}}(\O)}\le C\|D\tilde{\bf u}_{\e}-D{\bf u}\|^2_{L^r({\O})}\|\nabla {\bf u}\|_{L^{\infty}(\O)}.
    \ea
    In conclusion,
    \ba\nonumber
        \sigma_5 \|D\tilde{\bf u}_{\e}-D{\bf u}\|^2_{L^r({\O})} \le C\|D\tilde{\bf u}_{\e}-D{\bf u}\|^2_{L^r({\O})}\|\nabla {\bf u}\|_{L^{\infty}(\O)} + C \|{\bf u}\|_{W^{1,\infty}(\O)}[\e^{\frac{(3-r)\a-3}{r}}+\e^{\frac{(\a-1)(4r-6)}{r}}].
    \ea
    Choosing $\kappa$ sufficiently small such that $C \|\mathbf{u}\|_{W^{1,\infty}(\O)}\le C\|\mathbf{f}\|_{L^{\infty}(\O)} < \sigma_{5}$ implies 
    \ba\label{9/5<r<2-vrate}
        \|\tilde{\bf u}_{\e}-{\bf u}\|^2_{W^{1,r}({\O})}\le C\|{\bf u}\|_{W^{1,\infty}(\O)}\e^{\frac{(3-r)\alpha-3}{r}}.
    \ea
    \item {\bf Case  $\frac{6(\a-1)}{4\alpha-5}<r<\frac{9}{5}$}. Similar to the case $\frac{9}{5}\leq r<2$, we have that 
    \ba\nonumber
        \sigma_5 \|D\tilde{\bf u}_{\e}-D{\bf u}\|^2_{L^r({\O})}\le I_1+I_2+I_3,
    \ea
with
    \ba\nonumber
        |I_1|   \le C \|{\bf u}\|_{W^{1,\infty}(\O)}\e^{\frac{(\a-1)(4r-6)}{r}}, \quad 
        |I_3|   \le C\|D\tilde{\bf u}_{\e}-D{\bf u}\|^2_{L^r({\O})}\|\nabla {\bf u}\|_{L^{\infty}(\O)}.
    \ea
The estimates for $I_2$ is different: 
    \ba\nonumber
        |I_2| & \le \|\tilde{p}_\e\|_{L^{\frac{r^*}{2}}(\O)}\|{\bf u}\|_{L^{\infty}(\O)}\|\nabla g_\e\|_{L^{(\frac{r^*}{2})'}(\O)}+ \|\mathbb{S}(D\tilde{\bf u}_\e)\|_{L^{r'}(\O)}\|{\bf u}\|_{L^{\infty}(\O)}\|\nabla g_\e\|_{L^{r}(\O)}
        \\
        & \le C \|{\bf u}\|_{L^{\infty}(\O)}\big(\e^{\frac{(4\a-5)r-6(\a-1)}{r}}+\e^{\frac{(3-r)\a-3}{r}}\big)
        \\
        & \le C \|{\bf u}\|_{L^{\infty}(\O)}\e^{\frac{(4\a-5)r-6(\a-1)}{r}}.
    \ea
    Hence, 
    \ba\nonumber
        \sigma_5 \|D\tilde{\bf u}_{\e}-D{\bf u}\|^2_{L^r({\O})} 
        & \le C\|D\tilde{\bf u}_{\e}-D{\bf u}\|^2_{L^r({\O})}\|\nabla {\bf u}\|_{L^{\infty}(\O)} 
        \\
        & \qquad + C \|{\bf u}\|_{W^{1,\infty}(\O)}[\e^{\frac{(4\a-5)r-6(\a-1)}{r}}+\e^{\frac{(\a-1)(4r-6)}{r}}].
    \ea
  Choosing $\kappa$ sufficiently small such that $C\|\mathbf{u}\|_{W^{1,\infty}(\O)}\le C\|\mathbf{f}\|_{L^{\infty}(\O)} < \sigma_5$ implies
    \ba\label{r<9/5-vrate}
        \|\tilde{\bf u}_{\e}-{\bf u}\|^2_{W^{1,r}({\O})}\le C\|{\bf u}\|_{W^{1,\infty}(\O)}\e^{\frac{(4\a-5)r-6(\a-1)}{r}}.
    \ea
    \end{itemize}
    This completes the proof of the convergence rates for velocity field in $\eqref{rate-0r>2}$, \eqref{rate-095<r<2} and \eqref{rate-0r<95}.
    
\subsection{Convergence rates for pressure}

    Now we consider the convergence rate of $\tilde{p}_\e-p$. Concerning the estimate of the pressure $p$, the classical theory of distributions ensures that $\|p\|_{L^{q}(\Omega)} \leq \|\nabla p\|_{W^{-1,q}(\O)}$ for any $p\in L^{q}_{0}(\Omega)$ with $q\in (1, \infty)$; see, for example, \cite[Lemma II.2.2.2]{Sohr01}. Here, we briefly prove the improved integrability of the pressure $p$, since ${\bf u}$ has improved regularity by Proposition \ref{Shear Thickening Flows} and Proposition \ref{shear thinning fluids}.
    
    For $r\ge2$, Proposition \ref{Shear Thickening Flows} yields $\mathbf{u}\in W^{1,2r+2}(\O)$, and hence $\vu \in L^{\infty}(\Omega)$ and $\mathbb{S}(D\mathbf{u})\in L^{\frac{2r+2}{r-1}}(\O)$. Thus, for any $\phi\in C^\infty_c(\Omega)$, by employing the Bogovskii operator $\mathcal{B}$ introduced in Proposition \ref{bogov}, we have
    \ba\label{imp-p-1}
        |\langle p,\phi \rangle_{\O}| & = |\langle \nabla p, \mathcal{B}(\phi-\frac{1}{|\O|}\int_\O \phi\,{\rm d}x)\rangle_{\O}|
        \\
        & = |\langle\operatorname{div}(\mathbb{S}(D{\bf u}))-\operatorname{div}({\bf u}\otimes{\bf u})+{\bf f},\mathcal{B}(\phi-\frac{1}{|\O|}\int_\O \phi\,{\rm d}x)\rangle_{\O}|
        \\
        & \leq \big(\|\mathbb{S}(D\mathbf{u})\|_{L^{\frac{2r+2}{r-1}}(\O)}+\|\mathbf{u}\|^2_{L^\infty(\O)}\big)\|\nabla\mathcal{B}(\phi-\frac{1}{|\O|}\int_\O \phi\,{\rm d}x)\|_{L^{\frac{2r+2}{r+3}}(\O)}
        \\
        & \quad + \|\mathbf{f}\|_{L^\infty(\O)}\|\mathcal{B}(\phi-\frac{1}{|\O|}\int_\O \phi\,{\rm d}x)\|_{L^1(\O)}
        \\
        & \leq C \|\phi\|_{L^{\frac{2r+2}{r+3}}(\O)}.
    \ea
This means 
    \ba\label{r>2-p-regularity}
        \|p\|_{L^{\frac{2r+2}{r-1}}(\O)}\leq C.
    \ea
    
    Similarly, for $\frac{6(\a-1)}{4\a-5}<r<2$, by Proposition \ref{shear thinning fluids}, we have ${\bf u}\in W^{1,\infty}(\O)$; hence, for any $1<q<\infty$, we deduce that $\operatorname{div}(\mathbb{S}(D{\bf u}))$ and $\operatorname{div}({\bf u}\otimes{\bf u})$ both belong to $W^{-1,q}(\O)$, which implies that $\nabla p\in W^{-1,q}(\O).$ By similar arguments as in \eqref{imp-p-1},  we have
    \ba\label{r<2-p-regularity}
        \|p\|_{L^q(\O)}\leq C_{q}, \quad \mbox{for any $1<q<\infty$}.
    \ea

    Using the Bogovskii operator $\mathcal{B}_\e$  in Proposition \ref{bogov_e}, we define:
    \ba\nonumber
        \bfvarphi_\e=\mathcal{B}_\e\big(\phi\chi_{\O_\e}-\frac{1}{|\O_\e|}\int_{\O_\e} \phi \,{\rm d}x\big),
    \ea
     where $\chi_{\O_\e}$ denotes the  characteristic function of domain $\Omega_{\e}$.
    Due to the fact that $p_{\e}$ is of mean zero on $\Omega_{\e}$ and $p$ is of mean zero on $\Omega$,  we have
        \ba\nonumber
        |\langle\tilde{p}_\e-p,\phi\rangle_\Omega|
        &=|\langle\tilde{p}_\e-p,\phi\chi_{\O_\e}\rangle_\Omega+\langle\tilde{p}_\e-p,\phi-\phi\chi_{\O_\e}\rangle_\O|
        \\
        &=|\langle\tilde{p}_\e-p,\phi \chi_{\O_\e} -\frac{1}{|\O_\e|}\int_{\O_\e} \phi \,{\rm d} x \rangle_{\Omega_\e}-\langle p,\phi + \frac{1}{|\O_\e|}\int_{\O_\e} \phi \,{\rm d}x\rangle_{\O\setminus\O_\e}|
        \\
        &\le|\langle\nabla(\tilde {p}_\e-p),\bfvarphi_\e\rangle_{\O_\e}|+|\langle p,\phi +  \frac{1}{|\O_\e|}\int_{\O_\e} \phi \,{\rm d}x\rangle_{\O\setminus\O_\e}|
        \\
        &=I_1+I_2.
    \ea

    For $I_1$, using the weak formulations of \eqref{1.1} and \eqref{limit fluid equation} gives
    \ba\nonumber
        I_1 & =|\langle\nabla(\tilde{p}_\e-p),\bfvarphi_\e\rangle_{\O_\e}|
        \\
        & = |\langle\operatorname{div}\big(\mathbb{S}(D\tilde{\mathbf{u}}_\varepsilon)\big)-\operatorname{div}\big(\mathbb{S}(D\mathbf{u}) \big)
        -(\tilde{\mathbf{u}}_\varepsilon\cdot\nabla)\tilde{\mathbf{u}}_\varepsilon+(\mathbf{u}\cdot\nabla)\mathbf{u}, \bfvarphi_\e\rangle_{\O_\e}|
        \\
        &\le|\langle\mathbb{S}(D\tilde{\mathbf{u}}_\e)-\mathbb{S}(D\mathbf{u}),\nabla \bfvarphi_\e\rangle_{\O_\e}|+
        |\langle\tilde{\mathbf{u}}_\e\otimes\tilde{\mathbf{u}}_\e-\mathbf{u}\otimes\mathbf{u},\nabla \bfvarphi_\e\rangle_{\O_\e}|
        \\
        &=I_{11}+I_{12}.
    \ea

    \begin{itemize}
        \item {\bf Case $2\le r<3-\frac{3}{\alpha}$}.  For $I_{11}$, applying the inequality \eqref{eq:continuity_r_greater_2}  yields
    \ba\nonumber
        I_{11}
        & \le \|\mathbb{S}(D\tilde{\mathbf{u}}_\e)-\mathbb{S}(D\mathbf{u})\|_{L^{r'}(\O_\e)}\|\nabla\bfvarphi_\e\|_{L^{r}(\O_\e)}
        \\
        & \leq C\|(\delta^{2} + |D\tilde{\mathbf{u}}_\e|^{2} + |D\mathbf{u}|^{2})^{\frac{r-2}{2}} |D\tilde{\mathbf{u}}_\e - D\mathbf{u}|\|_{L^{r'}(\O_\e)}\|\nabla\bfvarphi_\e\|_{L^{r}(\O_\e)}
        \\
        & \le C\|(\delta^{2} + |D\tilde{\mathbf{u}}_\e|^{2} + |D\mathbf{u}|^{2})^{\frac{r-2}{2}} \|_{L^{\frac{r}{r-2}}(\O_\e)}\|D\tilde{\mathbf{u}}_\e - D\mathbf{u}\|_{L^r(\O_\e)}\|\nabla\bfvarphi_\e\|_{L^{r}(\O_\e)}
        \\
        &\le C\|D\tilde{\mathbf{u}}_\e - D\mathbf{u}\|_{L^r(\O_\e)}\|\nabla\bfvarphi_\e\|_{L^{r}(\O_\e)}
        \\
        &\le C\|\tilde{\mathbf{u}}_\e-\mathbf{u}\|_{W^{1,r}(\O)}\|\nabla\bfvarphi_\e\|_{L^{r}(\O_\e)}.
    \ea
    
    For $I_{12}$, since $2r' \leq r^{*}$ for any $r\ge2$, there holds
    \ba\nonumber
        I_{12}
        & \le
        \|\tilde{\mathbf{u}}_\e\otimes\tilde{\mathbf{u}}_\e-\mathbf{u}\otimes\mathbf{u}\|_{L^{r'}(\O_\e)}\|\nabla \bfvarphi_\e\|_{L^{r}(\O_\e)}
        \\
        &\le 
        C(\|\tilde{\mathbf{u}}_\e\|_{L^{r^*}(\O_\e)}+\|\mathbf{u}\|_{L^{r^*}(\O_\e)})\|\tilde{\mathbf{u}}_\e-\mathbf{u}\|_{L^{r^*}(\O_\e)}\|\nabla \bfvarphi_\e\|_{L^{r}(\O_\e)}
        \\
        & \le 
        C \|\tilde{\mathbf{u}}_\e-\mathbf{u}\|_{W^{1,r}(\O)}\|\nabla \bfvarphi_\e\|_{L^{r}(\O_\e)}.
    \ea
    Therefore,  by the convergence rate  \eqref{rate-0r>2} and the estimate of $\mathcal{B}_{\e}$   in Proposition \ref{bogov_e},  we have 
        \ba\label{I_1estimate}
        I_1  \le C \|\tilde{\mathbf{u}}_\e-\mathbf{u}\|_{W^{1,r}(\O)}\|\nabla \bfvarphi_\e\|_{L^{r}(\O_\e)} \le C \e^{\frac{(3-r)\a-3}{r^2}}\|\phi\|_{L^{r}(\O)}.
    \ea
    For $I_2$, from \eqref{r>2-p-regularity} we deduce that
    \ba\label{I_2-est-1}
        I_2  & = |\langle p,\phi +  \frac{1}{|\O_\e|}\int_{\O_\e} \phi \,{\rm d}x  \rangle_{\O\setminus\O_\e}|  \\
        & \leq \|p\|_{L^{\frac{2r+2}{r-1}}(\Omega)}\|\phi\|_{L^r(\O)}|\O\setminus\O_\e|^{\frac{(r+2)(r-1)}{2r (r+1)}}  +  \|p\|_{L^{\frac{2r+2}{r-1}}(\Omega)}\|\phi\|_{L^1(\O)}  |\O\setminus\O_\e|^{\frac{r+3}{2r+2}} |\Omega_{\e}|^{-1} \\
        & \le C \e^{\frac{3(\a-1)(r+2)(r-1)}{2r(r+1)}}\|\phi\|_{L^r(\O)}.
    \ea
    Thus, by \eqref{rate-v-f}, \eqref{I_1estimate} and \eqref{I_2-est-1}, together with the fact   $\frac{(3-r)\a-3}{r^2} \leq \frac{3(\a-1)(r+2)(r-1)}{2r(r+1)} $,  we obtain
    \ba\nonumber
        |\langle\tilde{p}_\e-p,\phi\rangle_\Omega|  \le I_1 +I_2 
         \le C \e^{\frac{(3-r)\a-3}{r^2}}\|\phi\|_{L^r(\O)}.
    \ea
    
    Hence, for $2\le r<3-\frac{3}{\alpha}$, there holds
    \ba\nonumber
        \|\tilde{p}_\e-p\|_{L^{r'}(\O)}\le C\e^{\frac{(3-r)\alpha-3}{r^2}},
    \ea
    which is exactly our desired convergence rate for pressure in $\eqref{rate-0r>2}$. 
    \item {\bf Case $\frac{9}{5}\le r<2$}. The inequality \eqref{eq:continuity_r_less_2} yields
    \ba\nonumber
        I_{11}
        & \le \|\mathbb{S}(D\tilde{\mathbf{u}}_\e)-\mathbb{S}(D\mathbf{u})\|_{L^{r'}(\O_\e)}\|\nabla\bfvarphi_\e\|_{L^{r}(\O_\e)}
        \\
        & \le C \|D\tilde{\mathbf{u}}_\e - D\mathbf{u}\|^{r-1}_{L^r(\O_\e)}\|\nabla\bfvarphi_\e\|_{L^{r}(\O_\e)}
        \\
        & \le C\|\tilde{\mathbf{u}}_\e-\mathbf{u}\|^{r-1}_{W^{1,r}(\O)}\|\nabla\bfvarphi_\e\|_{L^{r}(\O_\e)}.
    \ea
    For $\frac{9}{5}\le r<2$, there holds $\frac{2}{r^*}+\frac{1}{r}\le 1$ and consequently 
    \ba\nonumber
        I_{12}
        & \le C \|\tilde{\mathbf{u}}_\e\otimes\tilde{\mathbf{u}}_\e-\mathbf{u}\otimes\mathbf{u}\|_{L^{\frac{r^*}{2}}(\O_\e)}\|\nabla \bfvarphi_\e\|_{L^{r}(\O_\e)}
        \\
        & \le C\big(\|{\bf u}\|_{L^{r^*}(\O_\e)}+\|\tilde{\bf u}_\e\|_{L^{r^*}(\O_\e)}\big)\|\tilde{\bf u}_\e-{\bf u}\|_{L^{r^*}(\O_\e)}\|\nabla \bfvarphi_\e\|_{L^{r}(\O_\e)}
        \\
        & \le C \|\tilde{\mathbf{u}}_\e-\mathbf{u}\|_{W^{1,r}(\O)}\|\nabla \bfvarphi_\e\|_{L^{r}(\O_\e)}.
    \ea
    Recall that $p\in L^{q}(\Omega)$ for any $1<q<\infty$ (see \eqref{r<2-p-regularity}). Thus
    \ba\nonumber
        I_2  \le 
        \|p\|_{L^q(\O)}\|\phi\chi_{\O\setminus{\O_\e}}\|_{L^{q'}(\O)}
        \le C \|\phi\|_{L^{r}(\O)}|\O\setminus\O_\e|^{\frac{1}{q'}-\frac{1}{r}}
        \le C \e^{3(\a-1)(\frac{1}{q'}-\frac{1}{r})}\|\phi\|_{L^{r}(\O)}.
    \ea
    Thus, by \eqref{9/5<r<2-vrate} we obtain
    \ba\nonumber
        |\langle\tilde{p}_\e-p,\phi\rangle_\Omega|  \le I_1 +I_2  \le C\big[ \e^{\frac{[(3-r)\a-3](r-1)}{2r}}\big(1+\e^{\frac{(3-r)\a-3}{r}}\big) + \e^{3(\a-1)(\frac{1}{q'}-\frac{1}{r})} \big]\|\phi\|_{L^{r}(\O)}.
    \ea
    We may choose $q$ suitably large such that $3(\a-1)(\frac{1}{q'}-\frac{1}{r}) \geq \frac{[(3-r)\a-3](r-1)}{2r}$. Then
    \ba\nonumber
        |\langle\tilde{p}_\e-p,\phi\rangle_\Omega| \le C\e^{\frac{[(3-r)\a-3](r-1)}{2r}}\big(1+\e^{\frac{(3-r)\a-3}{r}}\big)\|\phi\|_{L^{r}(\O)} \leq C\e^{\frac{[(3-r)\a-3](r-1)}{2r}}\|\phi\|_{L^{r}(\O)} .
    \ea
    This means
    \ba\nonumber
        \|\tilde{p}_\e-p\|_{L^{r'}(\O)}\le C\e^{\frac{[(3-r)\a-3](r-1)}{2r}}.
    \ea
    This gives  our desired convergence rate for pressure in $\eqref{rate-095<r<2}$. 
    
    \item {\bf Case $\frac{6(\a-1)}{4\a-5}<r<\frac{9}{5}$}.
    As the case $\frac{9}{5}\le r<2$,  there holds for any $1<q<\infty$ that
    \ba\nonumber
        I_{11} & \le \|\mathbb{S}(D\tilde{\mathbf{u}}_\e)-\mathbb{S}(D\mathbf{u})\|_{L^{r'}(\O_\e)} \|\nabla\bfvarphi_\e\|_{L^r(\O_\e)}\le C\|\tilde{\mathbf{u}}_\e-\mathbf{u}\|^{r-1}_{W^{1,r}(\O)}\|\nabla\bfvarphi_\e\|_{L^{(\frac{r^*}{2})'}(\O_\e)},
        \\
        I_{12} & \le C \|\tilde{\mathbf{u}}_\e-\mathbf{u}\|_{W^{1,r}(\O)}\|\nabla \bfvarphi_\e\|_{L^{(\frac{r^*}{2})'}(\O_\e)},
        \\
        I_2 & \le C \e^{3(\a-1)(\frac{1}{q'}-\frac{1}{(\frac{r^*}{2})'})} \|\phi\|_{L^{(\frac{r^*}{2})'}(\O)}.
    \ea
   By \eqref{r<9/5-vrate}, choosing $q$ suitably large such that $ 3(\a-1)(\frac{2}{r^*}-\frac{1}{q}) \geq \frac{(4\a-5)r-6(\a-1)}{2r}$, we obtain
    \ba\nonumber
        |\langle\tilde{p}_\e-p,\phi\rangle_\Omega| & \le C\e^{\frac{[(4\a-5)r-6(\a-1)](r-1)}{2r}}\big(1+\e^{\sigma_{*}}\big)\|\phi\|_{L^{(\frac{r^*}{2})'}(\O)} 
        \\
        & \leq C\e^{\frac{[(4\a-5)r-6(\a-1)](r-1)}{2r}}\|\phi\|_{L^{(\frac{r^*}{2})'}(\O)},
    \ea
    where $\sigma_{*}>0$ is given in \eqref{sigma-star}.
    In conclusion, we have
    \ba\nonumber
        \|\tilde{p}_\e-p\|_{L^{\frac{r^*}{2}}(\O)}\le C\e^{\frac{[(4\a-5)r-6(\a-1)](r-1)}{2r}}.
    \ea
    This gives our desired convergence rate for pressure in  \eqref{rate-0r<95}.

    \end{itemize}

\section*{Acknowledgements}
All authors are partially supported by the Natural Science Foundation of Jiangsu Province (Grant No. BK20240058). Y. L. is partially supported by the Fundamental Research Funds for the Central Universities (Grant No. 2026300394).


\newpage
    \begin{appendices}
    \setcounter{table}{0}
    \setcounter{figure}{0}
    \setcounter{equation}{0}
    \renewcommand{\thetable}{\thesection.\arabic{table}}
    \renewcommand{\theequation}{\thesection.\arabic{equation}}
    \section{Additional properties of the stress tensor}
    For the sake of completeness, we present the following properties of the stress tensor satisfying \eqref{power-CY} (see \cite[Chapter 5]{Malek1996} for example). 
    \begin{lemma}\label{potential of the tensor}
    Let $\mathbb{S}(\boldsymbol{\xi})$ be one of the forms in \eqref{power-CY}  with $\delta>0,\mu_0>0$ and $\mu_1=0$. 
    \begin{itemize}
        \item If $r\ge2$,  then  there exist $\sigma_1>0,\sigma_2>0$ such that for all $\boldsymbol{\xi}, \boldsymbol{\zeta} \in \mathbb{M}^3_{sym}$, there holds
        \ba\label{eq:continuity_r_greater_2}
            |\mathbb{S}(\boldsymbol{\xi}) - \mathbb{S}(\boldsymbol{\zeta})| \leq \sigma_{1}  (\delta^2 + |\boldsymbol{\xi}|^2 + |\boldsymbol{\zeta}|^2)^{\frac{r-2}{2}} |\boldsymbol{\xi} - \boldsymbol{\zeta}|,
        \ea
        \ba\label{r>2coverity}
            \big(\mathbb{S}(\boldsymbol{\xi})-\mathbb{S}(\boldsymbol{\zeta})\big):\big(\boldsymbol{\xi}-\boldsymbol{\zeta}\big)
            \ge \sigma_2 ( |\boldsymbol{\xi}-\boldsymbol{\zeta}|^2  + |\boldsymbol{\xi}-\boldsymbol{\zeta}|^r).
        \ea
     
        \item If $1<r<2$, then there exists $\sigma_3 >0, \sigma_{4}>0$,  such that  for all $\boldsymbol{\xi}, \boldsymbol{\zeta} \in \mathbb{M}^3_{sym}$, there holds
        \ba\label{eq:continuity_r_less_2}
            \big|\mathbb{S}(\boldsymbol{\xi}) - \mathbb{S}(\boldsymbol{\zeta})\big| \le \sigma_3|\boldsymbol{\xi} - \boldsymbol{\zeta}|^{r-1}, 
        \ea
          \ba\label{r<2coverity}
            (\mathbb{S}(\boldsymbol{\xi})-\mathbb{S}(\boldsymbol{\zeta})):(\boldsymbol{\xi}-\boldsymbol{\zeta})
            \ge 
            \sigma_4(\delta+|\boldsymbol{\xi}|^2+|\boldsymbol{\zeta}|^2)^{\frac{r-2}{2}}|\boldsymbol{\xi}-\boldsymbol{\zeta}|^2.
        \ea
     Moreover, for any $\boldsymbol{\xi}, \boldsymbol{\zeta} \in L^r(\O)$,  
                \ba\label{r<2mono}
          \int_{\Omega}(\mathbb{S}(\boldsymbol{\xi})-\mathbb{S}(\boldsymbol{\zeta})):(\boldsymbol{\xi}-\boldsymbol{\zeta})\, \dx
            \ge
            \frac{\sigma_4\|\boldsymbol{\xi}-\boldsymbol{\zeta}\|^2_{L^r(\O)}}{\delta^{2-r} |\Omega|^{\frac{2-r}{r}}+\|\boldsymbol{\xi}\|^{2-r}_{L^r(\O)}+\|\boldsymbol{\zeta}\|^{2-r}_{L^r(\O)}}.
        \ea
    \end{itemize}
    
    \end{lemma}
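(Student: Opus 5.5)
The plan is to prove all the pointwise inequalities from one representation of $\mathbb{S}(\boldsymbol{\xi})-\mathbb{S}(\boldsymbol{\zeta})$ along a segment, and then derive \eqref{r<2mono} from the pointwise bound \eqref{r<2coverity} by Hölder. I treat the Carreau--Yasuda form $\mathbb{S}(\boldsymbol{\xi})=\mu_0(\delta^2+|\boldsymbol{\xi}|^2)^{\frac{r-2}{2}}\boldsymbol{\xi}$ in detail. The power-law form $\mu_0(\delta+|\boldsymbol{\xi}|)^{r-2}\boldsymbol{\xi}$ is handled the same way: $(\delta+s)^{r-2}$ and $(\delta^2+s^2)^{\frac{r-2}{2}}$ are comparable up to factors $2^{|r-2|/2}$, and its derivative has the same structure. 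For the same reason I read $(\delta+|\boldsymbol{\xi}|^2+|\boldsymbol{\zeta}|^2)$ in \eqref{r<2coverity} as comparable to $(\delta^2+|\boldsymbol{\xi}|^2+|\boldsymbol{\zeta}|^2)$ for fixed $\delta>0$, at the cost of a $\delta$-dependent constant.

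\textbf{Segment representation.} Set $\boldsymbol{\eta}_t=\boldsymbol{\zeta}+t(\boldsymbol{\xi}-\boldsymbol{\zeta})$ and $\mathbb{A}=\boldsymbol{\xi}-\boldsymbol{\zeta}$. Then
$$\mathbb{S}(\boldsymbol{\xi})-\mathbb{S}(\boldsymbol{\zeta})=\int_0^1 D\mathbb{S}(\boldsymbol{\eta}_t)[\mathbb{A}]\,{\rm d}t,$$
where
$$D\mathbb{S}(\boldsymbol{\eta})[\mathbb{A}]=\mu_0(\delta^2+|\boldsymbol{\eta}|^2)^{\frac{r-2}{2}}\mathbb{A}+\mu_0(r-2)(\delta^2+|\boldsymbol{\eta}|^2)^{\frac{r-4}{2}}(\boldsymbol{\eta}:\mathbb{A})\boldsymbol{\eta}.$$
Since $|\boldsymbol{\eta}|^2\le \delta^2+|\boldsymbol{\eta}|^2$, this gives the two-sided bounds
$$|D\mathbb{S}(\boldsymbol{\eta})[\mathbb{A}]|\le \mu_0\max\{1,r-1\}\,(\delta^2+|\boldsymbol{\eta}|^2)^{\frac{r-2}{2}}|\mathbb{A}|,$$
$$D\mathbb{S}(\boldsymbol{\eta})[\mathbb{A}]:\mathbb{A}\ge \mu_0\min\{1,r-1\}\,(\delta^2+|\boldsymbol{\eta}|^2)^{\frac{r-2}{2}}|\mathbb{A}|^2.$$
Every claim then reduces to estimating $\int_0^1(\delta^2+|\boldsymbol{\eta}_t|^2)^{\frac{r-2}{2}}{\rm d}t$ from above or below.

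\textbf{Case $r\ge2$.} The weight is increasing in $|\boldsymbol{\eta}_t|$, and $|\boldsymbol{\eta}_t|^2\le 2(|\boldsymbol{\xi}|^2+|\boldsymbol{\zeta}|^2)$, which gives \eqref{eq:continuity_r_greater_2} directly. For \eqref{r>2coverity} I need the standard reverse bound
$$\int_0^1(\delta^2+|\boldsymbol{\eta}_t|^2)^{\frac{r-2}{2}}{\rm d}t\ge c\,(\delta^2+|\boldsymbol{\xi}|^2+|\boldsymbol{\zeta}|^2)^{\frac{r-2}{2}}.$$
It holds because on a subinterval of $t$ of length at least $1/4$ one has $|\boldsymbol{\eta}_t|\ge \frac14\max\{|\boldsymbol{\xi}|,|\boldsymbol{\zeta}|\}$. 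Then $(\delta^2+|\boldsymbol{\xi}|^2+|\boldsymbol{\zeta}|^2)^{\frac{r-2}{2}}\ge c(\delta^{r-2}+|\boldsymbol{\xi}-\boldsymbol{\zeta}|^{r-2})$, using $|\boldsymbol{\xi}-\boldsymbol{\zeta}|^2\le 2(|\boldsymbol{\xi}|^2+|\boldsymbol{\zeta}|^2)$, and this yields $\sigma_2(|\mathbb{A}|^2+|\mathbb{A}|^r)$ with $\sigma_2$ depending on $\delta,\mu_0,r$.

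\textbf{Case $1<r<2$.} Now the weight is decreasing in $|\boldsymbol{\eta}_t|$, so the lower bound \eqref{r<2coverity} is immediate from $|\boldsymbol{\eta}_t|^2\le 2(|\boldsymbol{\xi}|^2+|\boldsymbol{\zeta}|^2)$. For the Hölder bound \eqref{eq:continuity_r_less_2} I drop $\delta$ and use
$$\int_0^1|\boldsymbol{\eta}_t|^{r-2}{\rm d}t\le C(|\boldsymbol{\xi}|+|\boldsymbol{\zeta}|)^{r-2}.$$
This is valid because $r-2>-1$ makes the possible singularity of $|\boldsymbol{\eta}_t|^{r-2}$ at a zero of $\boldsymbol{\eta}_t$ integrable on the line; this is the Acerbi--Fusco type lemma. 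It is the only step with genuine content, and I would prove it by splitting into the parameter set where $|\boldsymbol{\eta}_t|\ge\frac14(|\boldsymbol{\xi}|+|\boldsymbol{\zeta}|)$ and its complement. On the complement, $|\boldsymbol{\eta}_t|\ge |t-t_0|\,|\mathbb{A}|$ for the nearest point $t_0$, and $|\mathbb{A}|\gtrsim|\boldsymbol{\xi}|+|\boldsymbol{\zeta}|$ there. This gives $|\mathbb{S}(\boldsymbol{\xi})-\mathbb{S}(\boldsymbol{\zeta})|\le C(|\boldsymbol{\xi}|+|\boldsymbol{\zeta}|)^{r-2}|\mathbb{A}|\le C|\mathbb{A}|^{r-1}$, since $|\mathbb{A}|\le|\boldsymbol{\xi}|+|\boldsymbol{\zeta}|$ and $r-2<0$.

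\textbf{Integral inequality \eqref{r<2mono}.} Let $w=(\delta^2+|\boldsymbol{\xi}|^2+|\boldsymbol{\zeta}|^2)^{\frac{r-2}{2}}$. Hölder with exponents $\frac2r,\frac{2}{2-r}$ gives
$$\int_\Omega|\boldsymbol{\xi}-\boldsymbol{\zeta}|^r=\int_\Omega\big(w|\boldsymbol{\xi}-\boldsymbol{\zeta}|^2\big)^{\frac r2}w^{-\frac r2}\le\Big(\int_\Omega w|\boldsymbol{\xi}-\boldsymbol{\zeta}|^2\Big)^{\frac r2}\Big(\int_\Omega(\delta^2+|\boldsymbol{\xi}|^2+|\boldsymbol{\zeta}|^2)^{\frac r2}\Big)^{\frac{2-r}{2}}.$$
Raising to the power $2/r$ and using $(\delta^2+|\boldsymbol{\xi}|^2+|\boldsymbol{\zeta}|^2)^{\frac r2}\le C(\delta^r+|\boldsymbol{\xi}|^r+|\boldsymbol{\zeta}|^r)$ together with subadditivity of $s\mapsto s^{\frac{2-r}{r}}$ up to a constant, I get
$$\|\boldsymbol{\xi}-\boldsymbol{\zeta}\|_{L^r}^2\le C\Big(\int_\Omega w|\boldsymbol{\xi}-\boldsymbol{\zeta}|^2\Big)\Big(\delta^{2-r}|\Omega|^{\frac{2-r}{r}}+\|\boldsymbol{\xi}\|_{L^r}^{2-r}+\|\boldsymbol{\zeta}\|_{L^r}^{2-r}\Big).$$
Combined with \eqref{r<2coverity}, this proves \eqref{r<2mono}, with the harmless constant $C$ absorbed by renaming $\sigma_4$.
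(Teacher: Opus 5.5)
Your proposal is correct and follows essentially the paper's route: the segment representation of $\mathbb{S}(\boldsymbol{\xi})-\mathbb{S}(\boldsymbol{\zeta})$ through $D\mathbb{S}(\boldsymbol{\eta}_t)$, two-sided bounds on $D\mathbb{S}$, control of $\int_0^1(\delta^2+|\boldsymbol{\eta}_t|^2)^{\frac{r-2}{2}}\,{\rm d}t$, and H\"older for \eqref{r<2mono}. Two small points: your reading of $\delta$ as $\delta^2$ in \eqref{r<2coverity} is the right fix, and for $r<2$ the justification you cite (only $|\boldsymbol{\eta}|^2\le\delta^2+|\boldsymbol{\eta}|^2$, i.e.\ the triangle inequality) gives the factor $3-r$ used in the paper rather than $1$, which is immaterial.

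The only real difference is in \eqref{eq:continuity_r_less_2}. There the paper avoids the Acerbi--Fusco integral estimate: it assumes $|\boldsymbol{\xi}|\le|\boldsymbol{\zeta}|$ and splits into two cases. When $|\boldsymbol{\zeta}|\le|\boldsymbol{\xi}-\boldsymbol{\zeta}|$ it bounds $|\mathbb{S}(\boldsymbol{\xi})|+|\mathbb{S}(\boldsymbol{\zeta})|\le 2\mu_0|\boldsymbol{\xi}-\boldsymbol{\zeta}|^{r-1}$ directly; otherwise it uses $|\boldsymbol{\eta}_t|\ge(1-t)|\boldsymbol{\xi}-\boldsymbol{\zeta}|$. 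This is more elementary, but it yields only the $|\boldsymbol{\xi}-\boldsymbol{\zeta}|^{r-1}$ bound, not your sharper intermediate estimate $C(|\boldsymbol{\xi}|+|\boldsymbol{\zeta}|)^{r-2}|\boldsymbol{\xi}-\boldsymbol{\zeta}|$.
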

    
    \begin{proof} Here we only give the proof for the typical Carreau-Yasuda law case, i.e., $\mathbb{S}(\boldsymbol{\xi}) = \mu_0(\delta^2 + |\boldsymbol{\xi}|^2)^{\frac{r-2}{2}}\boldsymbol{\xi}$. The other case can be proved similarly.

\medskip

Define $\boldsymbol{\eta}(t) := \boldsymbol{\zeta} + t(\boldsymbol{\xi} - \boldsymbol{\zeta})$  and compute
    \begin{equation}\label{S-xi-eta}
        |\mathbb{S}(\boldsymbol{\xi}) - \mathbb{S}(\boldsymbol{\zeta})| = \big|\int_0^1 \frac{d}{dt}\mathbb{S}(\boldsymbol{\eta}(t)) dt\big| = \big|\int_0^1 D\mathbb{S}(\boldsymbol{\eta}(t)) : (\boldsymbol{\xi} - \boldsymbol{\zeta}) dt\big|,
    \end{equation}
    where $D\mathbb{S}$ denotes the Jacobian matrix of $\mathbb{S}$ satisfying for any symmetric matrix $\boldsymbol{V}$ that
    \begin{equation}\label{DV-inequality}
        D\mathbb{S}(\boldsymbol{\eta}) : \boldsymbol{V} = \mu_0 \big( (\delta^2 + |\boldsymbol{\eta}|^2)^{\frac{r-2}{2}}\boldsymbol{V} + (r-2)(\delta^2 + |\boldsymbol{\eta}|^2)^{\frac{r-4}{2}}(\boldsymbol{\eta} : \boldsymbol{V})\boldsymbol{\eta} \big),
    \end{equation}
and
    \ba\nonumber
        D\mathbb{S}(\boldsymbol{\eta}) : (\boldsymbol{V}\otimes \boldsymbol{V}) 
        = 
        \mu_0\big( (\delta^2 + |\boldsymbol{\eta}|^2)^{\frac{r-2}{2}}|\boldsymbol{V}|^2 + (r-2)(\delta^2 + |\boldsymbol{\eta}|^2)^{\frac{r-4}{2}}(\boldsymbol{\eta} : \boldsymbol{V})^2 \big).
    \ea
    
    \medskip
    
 We first consider the case $r\geq 2 $ and prove \eqref{r>2coverity} and \eqref{eq:continuity_r_greater_2}. With $r\geq 2$, we have 
    \ba\nonumber
        D\mathbb{S}(\boldsymbol{\eta}) : (\boldsymbol{V}\otimes \boldsymbol{V})
        \ge
        \mu_0(\delta^2 + |\boldsymbol{\eta}|^2)^{\frac{r-2}{2}}|\boldsymbol{V}|^2.
    \ea
    Taking $\boldsymbol{V}=\boldsymbol{\xi} - \boldsymbol{\zeta}$ and integrating over $t\in[0,1]$, we get 
    \ba\label{S-r>2}
        (\mathbb{S}(\boldsymbol{\xi})  -\mathbb{S}(\boldsymbol{\zeta})):(\boldsymbol{\xi}-\boldsymbol{\zeta})
        \ge
        \mu_0(\frac{1}{2})^{\frac{r-2}{2}}|\boldsymbol{\xi}-\boldsymbol{\zeta}|^2\int_0^1 \big(\delta^2 +|\boldsymbol{\zeta}+s(\boldsymbol{\xi}-\boldsymbol{\zeta})|\big)^{r-2}{\rm d}s.
    \ea
  By considering the cases $|\boldsymbol{\zeta}|\ge|\boldsymbol{\xi}-\boldsymbol{\zeta}|$ and $|\boldsymbol{\xi}-\boldsymbol{\zeta}|\ge|\boldsymbol{\zeta}|$ separately, we can conclude from \eqref{S-r>2} that
    \ba\nonumber
        \big(\mathbb{S}(\boldsymbol{\xi})-\mathbb{S}(\boldsymbol{\zeta})\big):\big(\boldsymbol{\xi}-\boldsymbol{\zeta}\big)
        \ge
        \sigma_2 ( |\boldsymbol{\xi}-\boldsymbol{\zeta}|^2 + |\boldsymbol{\xi}-\boldsymbol{\zeta}|^r),
     \ea
    where 
    \ba\label{CY-law}   
        \sigma_{2}= \mu_0 \big (\frac{1}{2} \big )^{\frac{r}{2}} \min\{ \delta^{r-2} ,   \frac{1}{r-1} , \frac{1}{ 12^{\frac{r}{2}}} \}.
    \ea
    
   We turn to prove  \eqref{eq:continuity_r_greater_2}. With  $r \geq  2$,  we have
      \ba\nonumber
        |D\mathbb{S}(\boldsymbol{\eta}) : \boldsymbol{V}| \leq \mu_0 \big( (\delta^2 + |\boldsymbol{\eta}|^2)^{\frac{r-2}{2}}|\boldsymbol{V}| + (r-2)(\delta^2 + |\boldsymbol{\eta}|^2)^{\frac{r-4}{2}}|\boldsymbol{\eta}|^2 |\boldsymbol{V}| \big)   = \mu_0 (r - 1)(\delta^2 + |\boldsymbol{\eta}|^2)^{\frac{r-2}{2}}|\boldsymbol{V}|.
    \ea
   Thus, we can deduce from \eqref{S-xi-eta} that
    \begin{equation}\nonumber
        |\mathbb{S}(\boldsymbol{\xi}) - \mathbb{S}(\boldsymbol{\zeta})| \leq \mu_0 (r - 1) |\boldsymbol{\xi} - \boldsymbol{\zeta}| \int_0^1 (\delta^2 + |\boldsymbol{\eta}(t)|^2)^{\frac{r-2}{2}} \, \dt. 
    \end{equation}
    Since $(\delta^2 + |\boldsymbol{\eta}(t)|^2)^{\frac{r-2}{2}} \leq 2^{\frac{r-2}{2}} (\delta^2 + |\boldsymbol{\xi}|^2 + |\boldsymbol{\zeta}|^2)^{\frac{r-2}{2}}$, we have
    \begin{equation*}
        |\mathbb{S}(\boldsymbol{\xi}) - \mathbb{S}(\boldsymbol{\zeta})| \leq \sigma_{1} (\delta^2 + |\boldsymbol{\xi}|^2 + |\boldsymbol{\zeta}|^2)^{\frac{r-2}{2}} |\boldsymbol{\xi} - \boldsymbol{\zeta}|, \quad \sigma_{1}   = \mu_0 (r - 1) 2^{\frac{r-2}{2}}.
    \end{equation*}
   
     \medskip

   We then consider the case $1<r<2$.  By symmetry,  we may assume $|\boldsymbol{\xi}| \le |\boldsymbol{\zeta}|$ and the case $|\boldsymbol{\xi}|\ge |\boldsymbol{\zeta}|$ can be proved similarly. If $|\boldsymbol{\xi}|  \leq  |\boldsymbol{\zeta}|\le |\boldsymbol{\xi} - \boldsymbol{\zeta}|$, with $1<r<2$ we can easily deduce that
    \ba\nonumber
        |\mathbb{S}(\boldsymbol{\xi}) - \mathbb{S}(\boldsymbol{\zeta})| & \le |\mathbb{S}(\boldsymbol{\xi})|+|\mathbb{S}(\boldsymbol{\zeta})|
        \\ 
        & \le \mu_0(\delta^2+|\boldsymbol{\xi}|^2)^{\frac{r-2}{2}}|\boldsymbol{\xi}|+\mu_0(\delta^2+|\boldsymbol{\zeta}|^2)^{\frac{r-2}{2}}|\boldsymbol{\zeta}|
        \\
        & \le 2 \mu_0|\boldsymbol{\xi} - \boldsymbol{\zeta}|^{r-1}
    \ea
    And if $|\boldsymbol{\xi} - \boldsymbol{\zeta}| \le |\boldsymbol{\zeta}|$, there holds $|\boldsymbol{\eta}(t)|\ge(1-t)|\boldsymbol{\xi} - \boldsymbol{\zeta}|$. Then, with $1<r<2$, using \eqref{DV-inequality} gives
        \ba\nonumber
        |D\mathbb{S}(\boldsymbol{\eta}) : \boldsymbol{V}| &\leq \mu_0 \big( (\delta^2 + |\boldsymbol{\eta}|^2)^{\frac{r-2}{2}}|\boldsymbol{V}| + |r-2|(\delta^2 + |\boldsymbol{\eta}|^2)^{\frac{r-4}{2}}|\boldsymbol{\eta}|^2 |\boldsymbol{V}| \big) \\
        &\leq \mu_0 (3 - r )(\delta^2 + |\boldsymbol{\eta}|^2)^{\frac{r-2}{2}}|\boldsymbol{V}|,
    \ea
 and consequently, by \eqref{S-xi-eta}, we have that
    \ba\nonumber
        |\mathbb{S}(\boldsymbol{\xi}) - \mathbb{S}(\boldsymbol{\zeta})| 
        & \leq \mu_0 (3 - r)|\boldsymbol{\xi} - \boldsymbol{\zeta}| \int_0^1 (\delta^2 + |\boldsymbol{\eta}(t)|^2)^{\frac{r-2}{2}} \dt \\ 
        & \leq \mu_0 (3 - r)|\boldsymbol{\xi} - \boldsymbol{\zeta}| \int_0^1 (\delta^2 + (1-t)^{2} | \boldsymbol{\xi} - \boldsymbol{\zeta}  |^2)^{\frac{r-2}{2}} \dt \\ 
    &  \leq \mu_0\frac{3-r}{r-1}|\boldsymbol{\xi} - \boldsymbol{\zeta}|^{r-1}. 
    \ea
We thus derive our desired result \eqref{eq:continuity_r_less_2} with $\sigma_3=\max\{2\mu_0,\mu_0\frac{3-r}{r-1}\}$.
    
    \medskip
    
    For  $1<r<2$, we proceed the same way as in the proof of \eqref{eq:continuity_r_greater_2} to  obtain
        \begin{equation}\nonumber
        (\mathbb{S}(\boldsymbol{\xi}) - \mathbb{S}(\boldsymbol{\zeta})) : (\boldsymbol{\xi} - \boldsymbol{\zeta}) \ge (r-1) |\boldsymbol{\xi} - \boldsymbol{\zeta}|^2 \int_0^1 (\delta^2 + |\boldsymbol{\eta}(t)|^2)^{\frac{r-2}{2}} \, {\rm d}t. 
        \end{equation}
      Using the fact $|\boldsymbol{\eta}(t)|^2 \le 2(|\boldsymbol{\xi}|^2 + |\boldsymbol{\zeta}|^2)$, we can thus get 
        \begin{equation}\nonumber
        (\mathbb{S}(\boldsymbol{\xi}) - \mathbb{S}(\boldsymbol{\zeta})) : (\boldsymbol{\xi} - \boldsymbol{\zeta}) \ge (r-1)2^{\frac{r-2}{2}}(\delta^2 + |\boldsymbol{\xi}|^2 + |\boldsymbol{\zeta}|^2)^{\frac{r-2}{2}} |\boldsymbol{\xi} - \boldsymbol{\zeta}|^2,
        \end{equation}
     which is our desired result \eqref{r<2coverity} with $\sigma_{4} = (r-1)2^{\frac{r-2}{2}}$.

        \medskip
        
        For \eqref{r<2mono}, integrating the pointwise inequality \eqref{r<2coverity} over domain $\Omega$ implies
        \begin{equation}\nonumber
        \int_{\Omega} \big( \mathbb{S}(\boldsymbol{\xi}) - \mathbb{S}(\boldsymbol{\zeta}) \big) : (\boldsymbol{\xi} - \boldsymbol{\zeta}) \, {\rm d}x \ge \sigma_{4}\int_{\Omega} (\delta^2 + |\boldsymbol{\xi}|^2 + |\boldsymbol{\zeta}|^2)^{\frac{r-2}{2}} |\boldsymbol{\xi} - \boldsymbol{\zeta}|^2 \, {\rm d}x.
        \end{equation}
        Applying H\"older's inequality   yields
        \ba\nonumber
        \int_{\Omega} |\boldsymbol{\xi} - \boldsymbol{\zeta}|^r \, {\rm d}x 
        & = \int_{\Omega} \big( |\boldsymbol{\xi} - \boldsymbol{\zeta}|^r (\delta^2 + |\boldsymbol{\xi}|^2 + |\boldsymbol{\zeta}|^2)^{\frac{r(r-2)}{4}} \big) \big( (\delta^2 + |\boldsymbol{\xi}|^2 + |\boldsymbol{\zeta}|^2)^{\frac{r(2-r)}{4}} \big) \, {\rm d}x
        \\
        & \le \big( \int_{\Omega} |\boldsymbol{\xi} - \boldsymbol{\zeta}|^2 (\delta^2 + |\boldsymbol{\xi}|^2 + |\boldsymbol{\zeta}|^2)^{\frac{r-2}{2}} \, {\rm d}x \big )^{\frac{r}{2}} \big ( \int_{\Omega} (\delta^2 + |\boldsymbol{\xi}|^2 + |\boldsymbol{\zeta}|^2)^{\frac{r}{2}} \, {\rm d}x \big )^{\frac{2-r}{2}}.
        \ea
        Direct computation gives
        \begin{equation}\nonumber
        \big ( \int_{\Omega} (\delta^2 + |\boldsymbol{\xi}|^2 + |\boldsymbol{\zeta}|^2)^{\frac{r}{2}} \, {\rm d}x \big )^{\frac{2-r}{r}} \le \delta^{2-r}|\Omega|^{\frac{2-r}{r}} + \|\boldsymbol{\xi}\|_{L^r(\Omega)}^{2-r} + \|\boldsymbol{\zeta}\|_{L^r(\Omega)}^{2-r}.
        \end{equation}
        Thus we have,
        \begin{equation}\nonumber
        \int_{\Omega} \big( \mathbb{S}(\boldsymbol{\xi}) - \mathbb{S}(\boldsymbol{\zeta}) \big) : (\boldsymbol{\xi} - \boldsymbol{\zeta}) \, {\rm d}x \ge \frac{ \sigma_{4}\|\boldsymbol{\xi} - \boldsymbol{\zeta}\|_{L^r(\Omega)}^2}{\delta^{2-r} |\Omega|^{\frac{2-r}{r}} + \|\boldsymbol{\xi}\|_{L^r(\Omega)}^{2-r} + \|\boldsymbol{\zeta}\|_{L^r(\Omega)}^{2-r}}.
        \end{equation}
    \end{proof}

    \section{Localization of the Minty trick}
    For the completeness and the convenience of  readers, we give a brief proof of Minty's trick, following \cite{Wolf2007}.
    \begin{lemma}\label{Minty Trick}
         Assume the stress tensor $\mathbb{S}:\mathbb{M}^d_{sym}\to \mathbb{M}^d_{sym}$ satisfies the {\em $r$-structure} \eqref{growth}$-$\eqref{monotonicity}. Let $\{\mathbf{u}_n\}_{ n\in\mathbb{N} }\subset W^{1,r}(\O;\mathbb{R}^d)$ and $\{\phi_n\}_{ n\in\mathbb{N} }\subset  L^\infty(\O)$ are given sequences, such that 
         \ba\nonumber
             0\le\phi_n\le a_0 \ a.e. \ in\ \O, \ \mbox{for some constant $a_{0}$},
         \ea
         \ba\nonumber
             D\mathbf{u}_n\to D\mathbf{u} \ weakly\ in \ L^r(\O),
         \ea
         \ba\nonumber
             \mathbb{S}(D\mathbf{u}_n)\to \overline{\mathbb{S}(D{\mathbf{u}})}\ weakly\ in \ L^{r'}(\O),
         \ea
         \ba\nonumber
             \phi_n\to\phi \ a.e. \ in \ \O \ as \ n\to\infty.
         \ea
  In addition suppose that  
         \ba\label{limit prop}
             \limsup_{n\to\infty}\int_\O \mathbb{S} (D\mathbf{u_n} ) : D\mathbf{u_n} \phi_n \,{\rm d}x 
             = \int_\O \overline{\mathbb{S}(D{\mathbf{u}})}:D\mathbf{u}\phi \,{\rm d}x.
         \ea
         Then 
         \ba\nonumber
             \overline{\mathbb{S}(D{\mathbf{u}})}\phi=\mathbb{S}(D\mathbf {u})\phi  \ a.e.\ in \ \O.
         \ea
    \end{lemma}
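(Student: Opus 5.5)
We will prove the localized Minty trick following \cite{Wolf2007}, in two stages. First we show $\phi\,[\mathbb{S}(D\mathbf u_n)-\mathbb{S}(D\mathbf u)]:(D\mathbf u_n-D\mathbf u)\to0$ in $L^1$ along a subsequence, hence $D\mathbf u_n\to D\mathbf u$ a.e. on $\{\phi>0\}$. Then we identify the weak limit using the continuity of $\mathbb{S}$.

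\textbf{Step 1 (reduction).} Set
\[
h_n:=\big(\mathbb{S}(D\mathbf u_n)-\mathbb{S}(D\mathbf u)\big):(D\mathbf u_n-D\mathbf u)\ \ge 0,
\]
which is nonnegative by \eqref{monotonicity}. Expanding,
\[
\int_\O h_n\phi_n\,\dx=\int_\O \mathbb{S}(D\mathbf u_n):D\mathbf u_n\phi_n-\mathbb{S}(D\mathbf u_n):D\mathbf u\,\phi_n-\mathbb{S}(D\mathbf u):D\mathbf u_n\phi_n+\mathbb{S}(D\mathbf u):D\mathbf u\,\phi_n\,\dx .
\]
For the two middle terms we use that $\phi_n\to\phi$ a.e. with $|\phi_n|\le a_0$. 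By dominated convergence, $\phi_n D\mathbf u\to\phi D\mathbf u$ strongly in $L^r$ and $\phi_n\mathbb{S}(D\mathbf u)\to\phi\mathbb{S}(D\mathbf u)$ strongly in $L^{r'}$; here $\mathbb{S}(D\mathbf u)\in L^{r'}$ by \eqref{growth}. Weak--strong pairing then passes these terms to the limit, and the last term converges directly. Combining with \eqref{limit prop} gives
\[
\limsup_n\int h_n\phi_n\le\int \overline{\mathbb{S}(D\mathbf u)}:D\mathbf u\,\phi-\overline{\mathbb{S}(D\mathbf u)}:D\mathbf u\,\phi-\mathbb{S}(D\mathbf u):D\mathbf u\,\phi+\mathbb{S}(D\mathbf u):D\mathbf u\,\phi=0 .
\]
Hence $h_n\phi_n\to0$ in $L^1(\O)$, and after extracting a subsequence, $h_n\phi_n\to0$ a.e.

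\textbf{Step 2 (a.e. convergence of gradients).} Fix a point $x$ with $\phi(x)>0$ where all the a.e. statements hold. For large $n$ we have $\phi_n(x)\ge\phi(x)/2$, so $h_n(x)\to0$. Next, $h_n(x)\ge c_1(\delta+|D\mathbf u_n|)^{r-2}|D\mathbf u_n|^2-C|D\mathbf u_n|^{r-1}(\delta+|D\mathbf u(x)|)-C(\ldots)$ by \eqref{growth} and \eqref{coer}, and $r>1$. Therefore $h_n(x)\to0$ forces $D\mathbf u_n(x)$ to remain bounded. Take any limit point $\boldsymbol\xi$ of $D\mathbf u_n(x)$. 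By continuity of $\mathbb{S}$, $(\mathbb{S}(\boldsymbol\xi)-\mathbb{S}(D\mathbf u(x))):(\boldsymbol\xi-D\mathbf u(x))=0$, so strict monotonicity gives $\boldsymbol\xi=D\mathbf u(x)$. Thus $D\mathbf u_n\to D\mathbf u$ a.e. on $\{\phi>0\}$.

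\textbf{Step 3 (identification).} By continuity of $\mathbb{S}$, $\mathbb{S}(D\mathbf u_n)\phi_n\to\mathbb{S}(D\mathbf u)\phi$ a.e. on $\{\phi>0\}$, while on $\{\phi=0\}$ the claim is trivial. The sequence $\mathbb{S}(D\mathbf u_n)\phi_n$ is bounded in $L^{r'}$ with $r'>1$, so by Vitali's theorem (equi-integrability from the $L^{r'}$ bound) it converges strongly in $L^1$ to $\mathbb{S}(D\mathbf u)\phi$. It also converges weakly to $\overline{\mathbb{S}(D\mathbf u)}\phi$, since $\phi_n\to\phi$ strongly in every $L^s$ with $s<\infty$. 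Uniqueness of limits then gives the claim.

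The main obstacle is Step 2. Turning $h_n\to0$ into a pointwise bound on $D\mathbf u_n$ requires the coercivity to dominate the cross terms, and it requires pointwise strict monotonicity rather than uniform monotonicity. The compactness argument at fixed $x$ handles this without any quantitative modulus.
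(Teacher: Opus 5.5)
Your proof is correct up to one small repair in Step 3 (below), and it takes a genuinely different route from the paper's. The paper runs the classical Minty argument. It passes to the limit in $\int_\O(\mathbb{S}(D\mathbf u_n)-\mathbb{S}(D\mathbf v)):(D\mathbf u_n-D\mathbf v)\phi_n\,\dx\ge0$ for an arbitrary comparison field and obtains $\int_\O(\overline{\mathbb{S}(D\mathbf u)}-\mathbb{S}(D\mathbf v)):(D\mathbf u-D\mathbf v)\phi\,\dx\ge0$. The hemicontinuity step (set $D\mathbf v=D\mathbf u-t\boldsymbol\eta$, divide by $t$, let $t\downarrow0$) is left implicit.

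You instead make the single choice $\mathbf v=\mathbf u$. The same weak--strong limit passage then gives $h_n\phi_n\to0$ in $L^1$, and you use coercivity, continuity and strict monotonicity pointwise to get $D\mathbf u_n\to D\mathbf u$ a.e. on $\{\phi>0\}$, finishing with Vitali.

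The two routes buy different things:
\begin{itemize}
\item The paper's route needs only non-strict monotonicity, continuity and growth, and it only identifies the weak limit. To yield a pointwise identity it must test with arbitrary $\boldsymbol\eta\in L^r(\O;\mathbb{M}^d_{sym})$, not only with symmetric gradients $D\mathbf v$, $\mathbf v\in W^{1,r}$, as it is phrased. The limit passage works verbatim for such $\boldsymbol\eta$.
\item Your route uses the full $r$-structure, including coercivity and strictness, but avoids the perturbation step entirely. It also gives more: a.e. convergence of $D\mathbf u_n$, hence of $\mathbb{S}(D\mathbf u_n)$, on $\{\phi>0\}$ along a subsequence. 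This is the pointwise convergence of the stress tensor announced at the start of the paper's strong-convergence subsection, which the Minty identification by itself does not provide.
\end{itemize}

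The repair in Step 3: on $\{\phi=0\}$ the products $\mathbb{S}(D\mathbf u_n)\phi_n$ need not converge a.e. There $\phi_n\to0$, but $|D\mathbf u_n|$ may blow up pointwise, so Vitali cannot be applied to that sequence on all of $\O$ as written. Apply it instead to $\mathbb{S}(D\mathbf u_n)\phi$, which has the following properties:
\begin{itemize}
\item it vanishes on $\{\phi=0\}$;
\item it converges a.e. to $\mathbb{S}(D\mathbf u)\phi$ on $\{\phi>0\}$ by your Step 2;
\item it is bounded in $L^{r'}$ since $0\le\phi\le a_0$;
\item it converges weakly in $L^{r'}$ to $\overline{\mathbb{S}(D\mathbf u)}\phi$, because multiplication by $\phi\in L^\infty$ preserves weak convergence.
\end{itemize}
Uniqueness of limits then gives $\overline{\mathbb{S}(D\mathbf u)}\phi=\mathbb{S}(D\mathbf u)\phi$ a.e.
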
     
    \begin{proof}
    As for the usual Minty's trick, it suffices to show that for any $\mathbf{v} \in W^{1,r}(\Omega;\mathbb{R}^n)$,
    \begin{equation} \label{eq:target}
        \int_\Omega (\overline{\mathbb{S}(D\mathbf{u})}-\mathbb{S}(D\mathbf{v})):(D\mathbf{u}-D\mathbf{v}) \phi \, {\rm d}x\ge0.
    \end{equation}
    Starting from the monotonicity condition $$\int_\Omega (\mathbb{S}(D\mathbf{u}_n)-\mathbb{S}(D\mathbf{v})):(D\mathbf{u}_n-D\mathbf{v}) \phi_n \, {\rm d}x\ge0,$$ we decompose the integral into three parts:
    \ba\nonumber
        A_n = \int_\Omega \mathbb{S}(D\mathbf{u}_n):D\mathbf{u}_n \phi_n \, {\rm d}x, \quad
        B_n = \int_\Omega \mathbb{S}(D\mathbf{u}_n):D\mathbf{v} \phi_n \, {\rm d}x, \quad
        C_n = \int_\Omega \mathbb{S}(D\mathbf{v}):(D\mathbf{u}_n - D\mathbf{v}) \phi_n \, {\rm d}x.
    \ea
    By assumption \eqref{limit prop}, $\limsup_{n\to \infty} A_n = \int_\Omega \overline{\mathbb{S}(D\mathbf{u})}:D\mathbf{u} \phi \, {\rm d}x$.
    For $B_n$ and $C_n$, the weak convergence of $\mathbb{S}(D\mathbf{u}_n)$ and $D\mathbf{u}_n$, combined with the strong convergence of $\phi_n \to \phi$ (due to boundedness and a.e. convergence via Egorov's theorem or dominated convergence theorem), implies
    \ba\nonumber
        \lim_{n\to\infty} (B_n + C_n) = \int_\Omega \overline{\mathbb{S}(D\mathbf{u})}:D\mathbf{v} \phi \, {\rm d}x + \int_\Omega \mathbb{S}(D\mathbf{v}):(D\mathbf{u}-D\mathbf{v}) \phi \, {\rm d}x.
    \ea
    Combining these limits yields \eqref{eq:target}.
    \end{proof}
    \end{appendices}
\end{document}